\numberwithin{theorem}{section}
\newcommand{\TheTitle}{Optimality of a standard adaptive finite element method for the Stokes problem} 
\newcommand{\TheAuthors}{M.~Feischl}
\title{{\TheTitle}\thanks{Submitted to the editors DATE.
		\funding{Supported by the Australian Research Council (ARC) under
			grant number DE170100222 and by the Deutsche Forschungsgemeinschaft (DFG) through CRC 1173. }}}
\author{
	Michael Feischl\thanks{Institute for Numerical Simulation, Universit\"at Bonn,
Endenicher Allee 19b,
53115 Bonn
		(\email{michael.feischl@uni-bonn.de}).}
}
\def\A{\mathbb A}
\def\R{{\mathbb R}}
\def\N{{\mathbb N}}
\def\EE{{\mathcal E}}
\def\BB{{\mathcal B}}
\def\MM{{\mathcal M}}
\def\NN{{\mathcal N}}
\def\OO{{\mathcal O}}
\def\PP{{\mathcal P}}
\def\RR{{\mathcal R}}
\def\SS{{\mathcal S}}
\def\TT{{\mathcal T}}
\def\JJ{{\mathcal J}}
\def\XX{{\mathcal X}}
\def\v{\boldsymbol{v}}
\def\w{\boldsymbol{w}}
\def\B{\boldsymbol{B}}
\def\A{\boldsymbol{A}}
\def\bu{{\boldsymbol{u}}}
\def\bp{{\boldsymbol{p}}}
\def\bv{{\boldsymbol{v}}}
\def\bw{{\boldsymbol{w}}}
\def\bB{{\boldsymbol{B}}}
\def\norm#1#2{\|#1\|_{#2}}
\def\set#1#2{\big\{#1\,:\,#2\big\}}
\def\eps{\varepsilon}
\def\v{\mathbf{v}}
\def\normL2#1#2{\|#1\|_{L^2(#2)}}
\newcommand{\dual}[3][]{#1\langle#2\,,\,#3#1\rangle}
\newcounter{constantsnumber}
\def\namec#1#2{%
 \ifthenelse{\equal{#1}{rel}}{C_{\rm rel}}{%
  \ifthenelse{\equal{#1}{mesh}}{C_{\rm mesh}}{%
  \ifthenelse{\equal{#1}{sz}}{C_{\rm sz}}{%
  \ifthenelse{\equal{#1}{dislocrel}}{C_{\rm dlr}}{%
  \ifthenelse{\equal{#1}{eff}}{C_{\rm eff}}{%
  \ifthenelse{\equal{#1}{main}}{C_{\rm V}}{%
  \ifthenelse{\equal{#1}{opt}}{C_{\rm opt}}{%
  \ifthenelse{\equal{#1}{normequiv}}{C_{\rm norm}}{%
  \ifthenelse{\equal{#1}{reliable}}{C_{\rm rel}}{%
  \ifthenelse{\equal{#1}{efficient}}{C_{\rm eff}}{%
  \ifthenelse{\equal{#1}{dlr}}{C_{\rm dlr}}{%
  \ifthenelse{\equal{#1}{stable}}{C_{\rm stab}}{%
  \ifthenelse{\equal{#1}{reduction}}{C_{\rm red}}{%
   \ifthenelse{\equal{#1}{unibound}}{C_{\rm hot}}{%
    \ifthenelse{\equal{#1}{hotConst}}{C_{\rm hot}}{%
   \ifthenelse{\equal{#1}{inverseK}}{C_{\rm K}}{%
  \ifthenelse{\equal{#1}{refined}}{C_{\rm ref}}{%
  \ifthenelse{\equal{#1}{estconv}}{C_{\rm est}}{%
  \ifthenelse{\equal{#1}{optimal}}{C_{\rm opt}}{%
  \ifthenelse{\equal{#1}{qo}}{C_{\rm qo}}{%
  \ifthenelse{\equal{#1}{mon}}{C_{\rm mon}}{%
  \ifthenelse{\equal{#1}{cea}}{C_{\mbox{\scriptsize C\'ea}}}{%
  \ifthenelse{\equal{#2}{newcounter}}{\refstepcounter{constantsnumber}\label{const#1}}{}C_{\ref{const#1}}}%
}}}}}}}}}}}}}}}}}}}}}}
\newcounter{contractionnumber}
\def\nameq#1#2{%
  \ifthenelse{\equal{#1}{reduction}}{q_{\rm red}}{%
  \ifthenelse{\equal{#1}{estconv}}{q_{\rm est}}{%
  \ifthenelse{\equal{#1}{cea}}{q_{\mbox{\scriptsize C\'ea}}}{%
  \ifthenelse{\equal{#2}{newcounter}}{\refstepcounter{contractionnumber}\label{contraction#1}}{}q_{\ref{contraction#1}}}%
}}}
\def\namer#1#2{%
  \ifthenelse{\equal{#1}{reduction}}{\rho_{\rm red}}{%
  \ifthenelse{\equal{#1}{estconv}}{\rho_{\rm est}}{%
  \ifthenelse{\equal{#1}{cea}}{\rho_{\mbox{\scriptsize C\'ea}}}{%
  \ifthenelse{\equal{#1}{qo}}{\rho_{\mbox{\scriptsize qo}}}{%
  \ifthenelse{\equal{#2}{newcounter}}{\refstepcounter{contractionnumber}\label{contraction#1}}{}\rho_{\ref{contraction#1}}}%
}}}}
\newenvironment{remark}{\medskip\noindent\textbf{Remark.}\ \it}{\qed\smallskip}
\def\T{\mathbb T}
\newcommand\stacktwo[2]{\genfrac{}{}{0pt}{}{#1}{#2}}
\numberwithin{equation}{section}
\numberwithin{theorem}{section}
\begin{document}

\maketitle
\begin{abstract}
	We prove that the  a standard adaptive algorithm for the Taylor-Hood discretization of the stationary Stokes problem converges with optimal rate.
	This is done by developing an abstract framework for quite general problems, which allows us to prove \emph{general quasi-orthogonality} proposed in~\cite{axioms}.
	This property is the main obstacle towards the optimality proof and therefore is the main focus of this work. The key ingredient is a new connection between the mentioned quasi-orthogonality
	and $LU$-factorization of infinite matrices. 
\end{abstract}
\section{Introduction}
We consider an adaptive mixed finite element method (FEM) for the stationary Stokes problem
\begin{align*}
 -\Delta u +\nabla p &=f,\\
 {\rm div} \,u&=0
\end{align*}
with standard Dirichlet boundary conditions in two space dimensions.
We discretize the problem with Taylor-Hood elements and employ a standard adaptive algorithm with D\"orfler marking.
The goal of this work is to provide a missing link in the theory of rate optimality: linear convergence for indefinite problems.

The theory of rate optimal adaptive algorithms for finite element methods originated in the seminal paper~\cite{stevenson07} by Stevenson and was further improved in~\cite{ckns}
by Cascon, Kreuzer, Nochetto, and Siebert. (Note that the notion of rate optimality differs from that of instance optimality for adaptive 
finite element methods introduced in the seminal papers~\cite{bdd,stevensoninstance}.)
These papers prove essentially, that a standard adaptive algorithm of the form
\begin{align*}
 \fbox{Solve}\longrightarrow\fbox{Estimate}\longrightarrow\fbox{Mark}\longrightarrow\fbox{Refine}
\end{align*}
generates asymptotically optimal meshes for the approximation of the solution of a Poisson problem.
The new ideas sparked a multitude of papers applying and extending the techniques to different problems, see e.g.,~\cite{ks,cn} for conforming methods,~\cite{rabus10,BeMao10,bms09,cpr13,mzs10}  
for nonconforming methods,~\cite{LCMHJX,CR2012,HuangXu} for mixed formulations, and~\cite{fkmp,gantumur,affkp,ffkmp:part1,ffkmp:part2} for boundary element methods 
(the list is not exhausted, see also~\cite{axioms} and
the references therein).
All the mentioned results, however, focus on symmetric and definite problems in the sense that the underlying equation induces a symmetric and definite operator. 
The missing link required to extend the theory to non-symmetric and indefinite problems is the quasi-orthogonality and consequently 
the linear convergence of the error. The first is usually an estimate of the form
\begin{align}\label{eq:qointro}
 \norm{u_{\ell+1}-u_{\ell}}{}^2\leq c\norm{u-u_{\ell}}{}^2-C\norm{u-u_{\ell+1}}{}^2 +   \text{terms of higher order}
\end{align}
with $c\approx C \approx 1$.
However, for indefinite and non-symmetric problems such an estimate does not seem to hold.
The first proof of rate optimality for a non-symmetric problem which does not rely on additional assumptions on the initial mesh is given in~\cite{nonsymm} for a general second order elliptic operator with non-vanishing diffusion
coefficient of the form
\begin{align*}
-{\rm div}(A\nabla u) + b\cdot \nabla u + cu=f.
\end{align*}
This approach, however, relies heavily on the fact that the non-symmetric part of the operator $ (b\cdot \nabla u + cu)$ is only a compact perturbation 
(one differentiation instead of two for the diffusion part). The first optimality proof of a strongly non-symmetric problem was given in the recent work~\cite{fembemopt} for
a finite-element/boundary-element discretization of a transmission problem. 
The proof relies on a novel form of quasi-orthogonality introduced in~\cite{axioms,nonsymm} called \emph{general quasi-orthogonality}.
Heuristically, instead of requiring~\eqref{eq:qointro} to hold for all $\ell\in\N$, it must only be true in a cumulative sense. 
This notion of quasi-orthogonality is the last missing building block to apply the theory developed in~\cite{stevenson07,ckns} and culminating in~\cite{axioms}
to prove linear convergence and ultimately rate optimal convergence.

The present work aims to generalize the approach from~\cite{fembemopt} to include indefinite problems.
While the work is concerned with the Stokes problem, the applied methods are quite general and may be useful for other indefinite problems. 

Currently available convergence and optimality theory for the Stokes problem is building on the seminal works~\cite{dahlke,nochettostokes}.
For certain non-standard (Uzawa type) algorithms for the Stokes problem, the work~\cite{stevensonstokes} proves optimal convergence. For nonconforming finite element methods, rate optimality and convergence has 
been investigated and achieved in~\cite{ncstokes1,ncstokes2,ncstokes3} by use of the fact that the Crouzeix-Raviart discretization of the Stokes problem essentially transforms it into a symmetric and elliptic problem. 

For the standard Taylor-Hood element, the first proofs of adaptive convergence were presented in~\cite{msv,siebert}, while the first a~posteriori error estimator was presented in~\cite{verfstokes}.
The work~\cite{gantstokes} gives an optimality proof under the assumption that \emph{general quasi-orthogonality} is satisfied.
This assumption is verified in the present work.

Since the level of technicality is already considerably high in the present two dimensional case, we refrain from presenting the general
case. However, there doesn't seems to be any inherent barrier and the proof techniques are expected to transfer.

\medskip
The remainder of the work is organized as follows: We present the setting and the main theorem in Section~\ref{section:gen}, discuss the key ideas of the proof in Section~\ref{sec:key}, construct a suitable Riesz basis in Section~\ref{section:basis}, and prove the main result in Section~\ref{sec:proofA1A4}. We conclude the work with Section~\ref{sec:tech} containing technical results which are required for the proof.

\section{General assumptions and the Stokes problem}\label{section:gen}
\subsection{Preliminaries and notation}
In the following, $\Omega\subseteq \R^2$ is a polygonal domain with Lipschitz boundary $\partial\Omega$. 
The restriction to $\R^2$ is not fundamental but simplifies some constructions later on in the technical sections.
Given a Lipschitz domain $\omega\subseteq \R^2$, we denote by $H^s(\omega)$ the usual Sobolev spaces for $s\geq 0$.
For non-integer values of $s$, we use real interpolation to define $H^s(\omega)$. 
Their dual spaces $\widetilde H^{-s}(\omega)$ are defined by extending the $L^2$-scalar product. 
We denote by $H^1_0(\Omega)$ the $H^1$-functions with vanishing trace and by $H^{-1}(\Omega)$ the dual space. The subscript $\star$ denotes vanishing mean 
of the functions in the given space, i.e., $L^2_\star(\Omega):=\set{v\in L^2(\Omega)}{\dual{v}{1}_\Omega=0}$
(the definition generalizes to $\widetilde H^{-s}(\Omega)$ in a straightforward fashion). Finally, $\PP^p(\omega)$ denotes
the polynomials of total degree less or equal to $p$. 

By $\norm{\cdot}{\ell_2}$, we denote the Euclidean distance of vectors in $\R^n$ as well as the $\ell_2$-sequence norm. 
The norm $\norm{\cdot}{\ell_2}$ induces the spectral norm for (infinite) matrices denoted by $\norm{\cdot}{2}$. The symbol $I$ denotes the identity matrix. The size of the matrix is always determined by the context.

\subsection{Mesh refinement}\label{section:mesh}
Let $\TT_0$ be a triangulation of $\Omega$. Given two triangulations $\TT,\TT^\prime$, we write $\TT^\prime={\rm refine}(\TT,\MM)$ for some $\MM\subseteq \TT$ if 
$\TT^\prime$ is generated from $\TT$ by refinement of all $T\in\MM$ (and subsequent mesh-closure) via newest vertex bisection. In contrast to the general case where a certain initial labelling of the edges is required to obtain an optimal refinement algorithm~\cite{bdd}, the paper~\cite{nvb2d} shows that in two space dimensions those initial conditions are not necessary. 
We write $\TT^\prime\in{\rm refine}(\TT)$
if $\TT^\prime$ is generated from $\TT$ by a finite number $j\in\N$ of iterated newest-vertex-bisection  refinements. To keep mesh conformity, one usually has to refine more elements then initially marked for refinement (mesh-closure). The number of extra refinements is bounded cumulatively in the sense that
\begin{align*}
 \#\TT^\prime-\#\TT_0\lesssim \sum_{i=0}^j\#\MM_j,
\end{align*}
where $\MM_j$ are elements marked for refinement in the individual steps of the refinement procedure, see~\cite{bdd,stevenson08,nvb2d}.
We denote the set of all possible refinements by $\T:={\rm refine}(\TT_0)$.
Given  $\omega\subseteq\Omega$, we call $\TT^\prime|_\omega$ a local refinement of $\TT$,
if there exists $\TT^{\prime\prime}\in {\rm refine}(\TT)$ such that $\TT^\prime|_\omega=\TT^{\prime\prime}|_\omega$.
Given $T\in\TT$ for some $\TT\in\T$, ${\rm level}(T)$ denotes the number of bisections necessary to generate $T$ from a parent element in $\TT_0$.

We define $\NN(\TT)$ as the set of nodes of $\TT$ and $\EE(\TT)$ as the set of edges of $\TT$.
For any triangulation, we define 
\begin{align*}
\PP^p(\TT)&:=\set{v\in L^2(\Omega)}{v|_T\in\PP^p(T),\,T\in\TT},\\
\SS^p(\TT)&:=\PP^p(\TT)\cap H^1(\Omega),\\
\SS^p_0(\TT)&:=\PP^p(\TT)\cap H^1_0(\Omega),\\
\SS^p_\star(\TT)&:=\PP^p(\TT)\cap H^1_\star(\Omega).
\end{align*}
We define $h_\TT\in\PP^0(\TT)$ as the mesh-size function by $h_\TT|_T:={\rm diam}(T)$ for all $T\in\TT$.

\medskip

Given a subset $\Omega^\prime\subseteq \Omega$, we define the patch
\begin{align*}
 \omega(\Omega^\prime,\TT):=\set{T_1\in\TT}{\exists T_2\in\TT,\,T_1\cap T_2\neq \emptyset,\, \int_{T_2\cap \Omega^\prime}1\,dx>0}.
\end{align*}
The extended patches $\omega^k(\Omega^\prime,\TT)$ are defined iteratively by
\begin{align*}
 \omega^1(\Omega^\prime,\TT):=\omega(\Omega^\prime,\TT),\quad\text{and}\quad  \omega^k(\Omega^\prime,\TT):=\omega(\bigcup\omega^{k-1}(\Omega^\prime,\TT),\TT).
\end{align*}

\subsection{The Stokes problem and the Taylor-Hood element}
Our main goal here is to prove general quasi-orthogonality and hence optimality of an adaptive algorithm
for the stationary Stokes problem, which reads
\begin{align}\label{eq:stokes}
\begin{split}
 -\Delta \bu +\nabla \bp &=f\quad\text{in }\Omega,\\
 {\rm div} \bu &= 0\quad\text{in }\Omega,\\
 \bu&=0\quad\text{on }\partial\Omega,\\
\mbox{$\int_\Omega$} \bp\,dx&=0
\end{split}
 \end{align}
for given given functions $f\in L^2(\Omega)$ with weak solutions $\bu\in H_0^1(\Omega)^2$ and $\bp\in L^2(\Omega)$. 
We define the space $\XX:=H^1_0(\Omega)^2\times L^2_\star(\Omega)$.

The weak formulation of~\eqref{eq:stokes} reads: Find $(\bu,\bp)\in\XX$ such that all $(v,q)\in\XX$ satisfy
\begin{align}\label{eq:weak}
a((\bu,\bp),(v,q)):= \int_\Omega \nabla \bu\cdot \nabla v\,dx - \int_\Omega \bp\,{\rm div}v\,dx -\int_\Omega q\,{\rm div}\bu\,dx = \int_\Omega fv\,dx.
\end{align}
For the purpose of discretization, we choose standard Taylor-Hood elements defined by
\begin{align*}
 \XX_\TT:= \SS^2_0(\TT)^2 \times \SS^1_\star(\TT). 
\end{align*}
Thus, the Galerkin formulation reads: Find $(\bu_\TT,\bp_\TT)\in\XX_\TT$ such that all $(v,q)\in\XX_\TT$ satisfy
\begin{align}\label{eq:galerkin}
a((\bu_\TT,\bp_\TT),(v,q)):= \int_\Omega \nabla \bu_\TT\cdot \nabla v\,dx - \int_\Omega \bp_\TT\,{\rm div}v\,dx -\int_\Omega q\,{\rm div}\bu_\TT\,dx = \int_\Omega fv\,dx.
\end{align}

We use a locally equivalent variation proposed in~\cite{gantstokes}  of the classical error estimator proposed by Verf\"urth~\cite{verfstokes}, 
i.e., for all $T\in\TT$ define
 \begin{align*}
\eta_T(\TT)^2&:={\rm diam}(T)^2\norm{f+\Delta \bu_\TT-\nabla \bp_\TT}{L^2(T)}^2 +{\rm diam}(T)\norm{[\partial_n \bu_\TT]}{L^2(\partial T\cap\Omega)}^2\\
&\qquad\qquad + {\rm diam}(T)\norm{{\rm div}(\bu_\TT)|_T}{L^2(\partial T)}^2,
\end{align*}
where $[\cdot]$ denotes the jump across an edge of $\TT$. (Note that there are also other error estimators which could be used here, e.g., those in~\cite{msv}.)
The overall estimator reads
\begin{align*}
 \eta(\TT):=\Big(\sum_{T\in\TT}\eta_T(\TT)^2\Big)^{1/2}\quad\text{for all }\TT\in\T
\end{align*}
and satisfies upper and lower error bounds, i.e.,
\begin{align}\label{eq:releff}
 C_{\rm rel}^{-1}\norm{\bu-\bu_\TT,\bp-\bp_\TT}{\XX} \leq \eta(\TT)^2\leq
 C_{\rm eff}\Big(\norm{\bu-\bu_\TT,\bp-\bp_\TT}{\XX}^2 + {\rm osc}(\TT)^2\Big)^{1/2},
\end{align}
where the data oscillation term reads ${\rm osc}(\TT)^2:= \min_{g\in \PP^0(\TT)}\sum_{T\in\TT} {\rm diam}(T)^2\norm{f-g}{L^2(T)}^2$.
We define $\XX_\ell:=\SS^{2}(\TT_\ell)^2\times \SS^1_\star(\TT_\ell)$ to denote the adaptively generated spaces from Algorithm~\ref{algorithm}, below.

\subsection{Inf-sup condition and some notation}
To simplify presentation, we collect velocity and pressure in one variable, i.e., $u=(\bu,\bp)\in\XX$ and $u_\TT=(\bu_\TT,\bp_\TT)\in\XX_\TT$.
According to~\cite{brezzi-fortin}, the Stokes problem $a(\cdot,\cdot)\colon\XX\times\XX\to\R$ satisfies
 \begin{align}\label{eq:infsup}
  \inf_{u\in\XX}\sup_{v\in\XX}\frac{a(u,v)}{\norm{u}{\XX}\norm{v}{\XX}}>0. 
 \end{align}
If $\TT_0$ contains at least three triangles,~\cite{boffi} proves even
 \begin{align}\label{eq:discinfsup}
  \inf_{\TT\in\T}\inf_{u\in\XX_\TT}\sup_{v\in\XX_\TT}\frac{a(u,v)}{\norm{u}{\XX}\norm{v}{\XX}}>0.
 \end{align}
 This shows existence and uniqueness of the weak solutions defined above. In the new notation, we have $u\in\XX$ 
 as well as the discrete solution $u_\TT\in\XX_\TT$ by 
\begin{align}\label{eq:solutions}
 a(u,v)=\dual{f}{v}\quad\text{for all }v\in\XX\quad\text{and}\quad a(u_\TT,v)=\dual{f}{v}\quad\text{for all }v\in\XX_\TT.
 \end{align}

\subsection{Adaptive algorithm}

Given a triangulation $\TT\in\T$, we assume that we can compute the error estimator $\eta(\TT)=\sqrt{\sum_{T\in\TT}\eta_T(\TT)^2}$ exactly.
Due to technical reasons, we have to restrict to adaptive triangulations with mild grading in the sense that there exists $D_{\rm grad}\in\N$ such that
\begin{align}\label{eq:graded}
 |{\rm level}(T)-{\rm level}(T^\prime)|\leq 1\quad\text{for all }T^\prime \in \omega^{D_{\rm grad}}(T,\TT)\text{ and all }T\in\TT.
\end{align}
This condition seems to be required for the present proof and also appears in~\cite{l2opt} to prove optimal convergence in the $L^2$-norm. 
By $\T_{\rm grad}\subseteq \T$, we denote all triangulations which satisfy~\eqref{eq:graded} for a given $D_{\rm grad}\in\N$. The result~\cite[Lemma~2.3]{fembemopt} shows that the restriction
does not alter the optimal convergence rate. Numerical experiments suggest that the restriction is not even necessary for optimal convergence rate, and thus
might just be an artifact of the proof. In the following, we assume that $D_{\rm grad}$ is sufficiently large to satisfy all the conditions in the proofs below.

We assume that the sequence $(\TT_\ell)_{\ell\in\N_0}\subset \T^{\rm grad}$ is generated by an adaptive algorithm of the form:
\begin{algorithm}[H]\label{algorithm}
Input: $\ell=0$, $\TT_0$, $D_{\rm grad}\in\N$, $0< \theta\leq 1$, $f\in \XX^\star$.\\
For $\ell=0,1,\ldots$ do:
 \begin{enumerate}
  \item Compute $u_\ell\in\XX_\ell$ as the unique solution of
  \begin{align}\label{eq:adapsol}
   a(u_\ell,v)=\dual{f}{v}\quad\text{for all }v\in\XX_\ell.
   \end{align}
  \item Compute error estimator $\eta_T(\TT_\ell)$ for all $T\in\TT_\ell$.
  \item Mark set of minimal cardinality $\MM_\ell\subseteq \TT_\ell$ such that
  \begin{align}\label{eq:doerfler}
   \sum_{T\in\MM_\ell}\eta_T(\TT_\ell)\geq \theta \sum_{T\in\TT_\ell}\eta_T(\TT_\ell).
  \end{align}
  \item Refine at least the elements $\MM_\ell$ of $\TT_\ell$ to obtain $\TT_{\ell+1}$.
  \item Refine additional elements to ensure that $\TT_{\ell+1}$ satisfies~\eqref{eq:graded} (see, e.g.,~\cite[Section~A.3]{l2opt}
  for a valid mesh-refinement algorithm).
 \end{enumerate}
Output: sequence of meshes $\TT_\ell$ and corresponding solutions $u_\ell$.
\end{algorithm}

\subsection{Rate optimality}
We aim to analyze the best possible algebraic convergence rate which can be obtained by the adaptive algorithm.
This is mathematically characterized as follows: For the exact solution $u\in \XX$, we define an approximation class $\A_s$ by
\begin{align}\label{def:approxclass}
 u \in \A_s
 \quad\overset{\rm def.}{\Longleftrightarrow}\quad
\norm{u}{\A_s}:= \sup_{N\in\N}\min_{\stacktwo{\TT\in\T}{ \#\TT-\#\TT_0\leq N}}N^s\eta(\TT) < \infty.
\end{align}
 By definition, a convergence rate $\eta(\TT) = \OO(N^{-s})$ is theoretically possible if the optimal meshes are chosen.
 In view of mildly graded triangulations, we define
 \begin{align*}
   u \in \A_s^{\rm grad}
 \quad\overset{\rm def.}{\Longleftrightarrow}\quad
\norm{u}{\A_s^{\rm grad}}:= \sup_{N\in\N}\min_{\stacktwo{\TT\in\T_{\rm grad} }{ \#\TT-\#\TT_0\leq N}}N^s\eta(\TT) < \infty.
 \end{align*}
In~\cite[Lemma~2.3]{fembemopt}, we show that in many situations (including the present setting) $\A_s^{\rm grad}=\A_s$ 
for algebraic rates $s>0$. However, condition~\eqref{eq:graded} influences $\norm{u}{\A_s^{\rm grad}}$ and leads to a slightly non-standard AFEM.
In the spirit of~\cite{axioms}, rate optimality of the adaptive algorithm means that there exists a constant $C_{\rm opt}>0$ such that
\begin{align*}
 C_{\rm opt}^{-1}\norm{u}{\A_s}\leq\sup_{\ell\in\N_0}\frac{\eta(\TT_\ell)}{(\#\TT_\ell-\#\TT_0+1)^{-s}}\leq C_{\rm opt}\norm{u}{\A_s},
\end{align*}
for all $s>0$ with $\norm{u}{\A_s}<\infty$.

\subsection{An abstract theory of rate optimality}\label{sec:axioms}
 As proved in~\cite{axioms}, we need to check the assumptions~(A1)--(A4) to ensure rate optimality for a given adaptive algorithm: There exist constant $C_{\rm red}$, $C_{\rm stab}$, $C_{\rm qo}$, $C_{\rm dlr}$,
$C_{\rm ref}>0$,
and $0\leq q_{\rm red}<1$
such that

\renewcommand{\theenumi}{{\rm A\arabic{enumi}}}%
\newcounter{subterm}%
\begin{enumerate}
\item \label{A:stable}\textbf{Stability on non-refined elements}: For all refinements $\widehat\TT\in\T$ of a triangulation $\TT\in\T$,
for all subsets $\mathcal{S}\subseteq\TT\cap\widehat\TT$ of non-refined elements, it holds that
\begin{align*}
\Big|\Big(\sum_{T\in\mathcal{S}}\eta_T(\widehat\TT)^2\Big)^{1/2}-\Big(\sum_{T\in\mathcal{S}}\eta_T(\TT)^2\Big)^{1/2}\Big|\leq C_{\rm stab}\,\norm{u_\TT- u_{\widehat\TT}}{\XX}.
\end{align*}
\item \label{A:reduction}\textbf{Reduction property on refined elements}: Any refinement $\widehat\TT\in\T$ of a triangulation $\TT\in\T$ satisfies
\begin{align*}
\sum_{T\in\widehat\TT\setminus\TT}\eta_T(\widehat\TT)^2\leq q_{\rm red} \sum_{T\in\TT\setminus\widehat\TT}\eta_T(\TT)^2 + C_{\rm red}\norm{u_\TT- u_{\widehat\TT}}{\XX}^2.
\end{align*}
\item\label{A:qosum}\textbf{General quasi-orthogonality}: For one sufficiently small $\eps\geq 0$ (see~\cite[Section~3]{axioms} for details) the output of Algorithm~\ref{algorithm}
satisfies for all $\ell,N\in\N_0$
\begin{align}\label{eq:qosum}
 \sum_{k=\ell}^{\ell+N} \Big(\norm{u_{k+1}-u_k}{\XX}^2-C_{\rm qo}\eps\norm{u-u_k}{\XX}^2\Big)\leq C_{\rm qo} \norm{u-u_\ell}{\XX}^2.
\end{align}
\item\label{A:dlr}\textbf{Discrete reliability}:
For all refinements $\widehat\TT\in\T$ of a triangulation $\TT\in\T$,
there exists a subset $\RR(\TT,\widehat\TT)\subseteq\TT$ with $\TT\backslash\widehat\TT \subseteq\RR(\TT,\widehat\TT)$ and
$|\RR(\TT,\widehat\TT)|\le C_{\rm ref}|\TT\backslash\widehat\TT|$ such that
\begin{align*}
 \norm{u_{\widehat\TT}-u_\TT}{\XX}^2
 \leq C_{\rm dlr}^2\sum_{T\in\RR(\TT,\widehat\TT)}\eta_T(\TT)^2.
\end{align*}
\end{enumerate}

While the assumptions~(A1), (A2), and (A4) are already known in various forms in the literature, the general quasi-orthogonality~(A3) seems
to be the main obstacle for the optimality proof.

\subsection{The main result}

The following result shows rate optimality of the adaptive algorithm and is the main result of the paper. 

\begin{theorem}[Optimality of the adaptive algorithm]\label{thm:opt}
Given sufficiently small $\theta>0$ and sufficiently large $D_{\rm grad}\geq 1$, 
Algorithm~\ref{algorithm} applied to the stationary Stokes problem as described above guarantees rate-optimal convergence, i.e., there exists a constant $C_{\rm opt}>0$ such that
\begin{align*}
 C_{\rm opt}^{-1}\norm{u}{\A_s^{\rm grad}}\leq\sup_{\ell\in\N_0}\frac{\eta(\TT_\ell)}{(\#\TT_\ell-\#\TT_0+1)^{-s}}\leq C_{\rm opt}\norm{u}{\A_s^{\rm grad}},
\end{align*}
as well as
\begin{align*}
 C_{\rm eff}^{-1}C_{\rm opt}^{-1}\norm{u}{\A_s^{\rm grad}}\leq\sup_{\ell\in\N_0}\frac{\sqrt{\norm{u-u_\ell,p-p_\ell}{\XX}^2 + {\rm osc}(\TT_\ell)^2}}{(\#\TT_\ell-\#\TT_0+1)^{-s}}\leq (1+C_{\rm rel})C_{\rm opt}\norm{u}{\A_s^{\rm grad}},
\end{align*}
for all $s>0$ with $\norm{u}{\A_s}<\infty$.
\end{theorem}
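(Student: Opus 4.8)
The strategy is to verify the four abstract axioms~(A1)--(A4) from Section~\ref{sec:axioms} and then invoke the abstract optimality result of~\cite{axioms}, which directly yields the first chain of estimates for $\eta(\TT_\ell)$; the second chain then follows by combining the reliability/efficiency bounds~\eqref{eq:releff} with the first chain. Axioms~(A1) and~(A2) (stability and reduction on refined elements) are local perturbation properties of the residual estimator $\eta_T$: here I would follow the by-now-standard \Verfuerth-type argument, using the explicit form of $\eta_T$, inverse estimates on the local polynomial spaces, the local equivalence with Verf\"urth's estimator from~\cite{gantstokes,verfstokes}, and the fact that refined elements shrink their diameter by a fixed factor under newest-vertex bisection. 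The grading hypothesis~\eqref{eq:graded} plays no essential role here. Axiom~(A4), discrete reliability, is proven by the usual localization of the residual: given $\widehat\TT$ refining $\TT$, test the Galerkin orthogonality $a(u_{\widehat\TT}-u_\TT,v)=0$ for $v\in\XX_\TT$ with a quasi-interpolation (Scott--Zhang type, respecting the divergence constraint via the Taylor--Hood inf-sup~\eqref{eq:discinfsup}) of the difference $u_{\widehat\TT}-u_\TT$, so that the residual sees only the patch of refined elements; combined with the discrete inf-sup~\eqref{eq:discinfsup} this gives the bound on $\norm{u_{\widehat\TT}-u_\TT}{\XX}$ by the estimator contributions over $\RR(\TT,\widehat\TT):=\omega^k(\TT\setminus\widehat\TT,\TT)$ for suitable fixed $k$, whose cardinality is controlled by $|\TT\setminus\widehat\TT|$ thanks to shape-regularity of newest-vertex bisection.

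The genuinely hard part, and the core contribution of the paper, is axiom~(A3), general quasi-orthogonality~\eqref{eq:qosum}. Because the Stokes operator $a(\cdot,\cdot)$ is indefinite and non-symmetric, the classical Pythagorean identity fails and the pointwise estimate~\eqref{eq:qointro} is unavailable; instead one must establish the summed bound. The plan, following the philosophy of~\cite{fembemopt}, is to convert the nested Galerkin solutions $u_\ell$ into a problem about infinite matrices: using the Riesz basis constructed in Section~\ref{section:basis}, represent the union space $\bigcup_\ell \XX_\ell$ (or rather the relevant scale of spaces) in coordinates so that the bilinear form $a$ becomes an infinite matrix $\A$, and the nested subspaces $\XX_\ell$ correspond to leading principal sections. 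The quantity $\norm{u_{k+1}-u_k}{\XX}^2$ is then, up to norm equivalence, the squared norm of an increment of the solution of a growing sequence of finite linear systems, and the key algebraic insight is that summing these increments is governed by an $LU$-factorization of $\A$: the diagonal of $U$ (the Schur complements) encodes exactly the orthogonality defect, and the product structure gives the cumulative bound with a small constant $\eps$ provided the off-diagonal decay of $\A$ in the chosen basis is strong enough. Establishing that decay — i.e.\ that the Riesz basis of Section~\ref{section:basis} makes $\A$ sufficiently close to block-diagonal across levels, uniformly in the adaptively refined (but mildly graded, by~\eqref{eq:graded}) meshes — is where the grading hypothesis and the two-dimensional wavelet-type construction are used, and this is the step I expect to require the bulk of the technical work in Sections~\ref{section:basis} and~\ref{sec:tech}.

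Once~(A1)--(A4) are in place, the abstract machinery of~\cite{axioms} gives: from~(A1),(A2),(A4) and \Doerfler{} marking~\eqref{eq:doerfler} the estimator-reduction and, crucially, \emph{linear convergence} $\eta(\TT_{\ell+m})\le C\,q^m\,\eta(\TT_\ell)$ with $q<1$ (this is the precise point where~(A3) replaces the usual quasi-orthogonality); and then from~(A4) together with the cardinality control $|\RR(\TT,\widehat\TT)|\le C_{\rm ref}|\TT\setminus\widehat\TT|$ and the minimal-cardinality choice of $\MM_\ell$, a comparison with the optimal meshes defining $\A_s^{\rm grad}$, yielding the upper bound $\sup_\ell \eta(\TT_\ell)(\#\TT_\ell-\#\TT_0+1)^s\le C_{\rm opt}\norm{u}{\A_s^{\rm grad}}$ for $\theta$ small enough. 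The lower bound $C_{\rm opt}^{-1}\norm{u}{\A_s^{\rm grad}}\le\sup_\ell(\cdots)$ is immediate from the definition of $\A_s^{\rm grad}$ since the meshes $\TT_\ell$ are admissible competitors with $\#\TT_\ell-\#\TT_0+1$ degrees of freedom. Finally, inserting the a~posteriori bounds~\eqref{eq:releff} converts the $\eta(\TT_\ell)$-statement into the total-error-plus-oscillation statement, with the stated constants $C_{\rm rel}$, $C_{\rm eff}$. The role of $D_{\rm grad}$ sufficiently large and $\theta$ sufficiently small is exactly to make the off-diagonal decay in~(A3) and the \Doerfler{} comparison in the optimality step work; by~\cite[Lemma~2.3]{fembemopt} the passage from $\A_s^{\rm grad}$ back to $\A_s$ costs only a constant, which is why the theorem may be stated with $\A_s^{\rm grad}$ without loss.
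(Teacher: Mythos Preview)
Your proposal is correct and follows essentially the same route as the paper: verify axioms~(A1)--(A4) (with~(A1),~(A2),~(A4) taken from~\cite{gantstokes} and~(A3) established via the Riesz-basis/$LU$-factorization machinery of Sections~\ref{sec:key}--\ref{sec:tech}), then invoke~\cite[Theorem~4.1]{axioms} together with~\cite[Lemma~2.3]{fembemopt} for the estimator statement, and finally use~\eqref{eq:releff} to pass to the error-plus-oscillation statement. The only point the paper makes explicit that you leave implicit is the need to check that the mesh-refinement with additional grading (Step~5 of Algorithm~\ref{algorithm}, implemented as in~\cite[Section~A.3]{l2opt}) fits the abstract refinement framework of~\cite[Section~2.4]{axioms}, but this is a minor technicality.
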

\begin{proof}
Note that both statements are equivalent due to the upper and lower bounds in~\eqref{eq:releff}. To prove the first statement,
we have to specify a mesh-refinement strategy which ensures~\eqref{eq:graded} and fits into the framework of~\cite[Section~2.4]{axioms}.
To that end, we use the strategy specified in~\cite[Section~A.3]{l2opt} (note that the condition in~\cite[Section~A.3]{l2opt} and~\eqref{eq:graded} are equivalent up to shape regularity).
Then, the result follows immediately from~\cite[Theorem~4.1]{axioms} and~\cite[Lemma~2.3]{fembemopt}, after we prove the assumptions (A1)--(A4) in Section~\ref{sec:proofA1A4}. (Note that instead of this proof, we could also use Theorem~\ref{thm:qosum} together with~\cite{gantstokes}, in which general quasi-orthogonality is assumed in order to prove optimality.)
\end{proof}
\begin{remark}
 The condition on $\theta>0$ being sufficiently small can be quantified in terms of the constants in (A1)--(A4) (see~\cite{axioms}). However,
 in general one does not know the constants and hence has to guess $\theta$. Practical examples show that a choice of $\theta=1/2$ is very robust in many cases. This is in stark contrast to the maximum marking strategy~\cite{bdd, stevensoninstance}, where one can prove that any marking parameter leads to optimality. The maximum strategy, however, is not proven to produce work optimal algorithms. We also would like to mention the work~\cite{ksinst} on the Stokes problem, which proves instance optimality for a maximum marking strategy. This work, however, relies heavily on the fact that the Crouzeix-Raviart discretization essentially turns the Stokes problem into a symmetric and elliptic problem.
\end{remark}

\section{The key ideas of the proof}\label{sec:key}
This section presents the two insights which enable the proof of general quasi-orthogonality and thus optimal convergence. 
The first one in Section~\ref{sec:LUqo} is that general quasi-orthogonality is strongly related to the existence 
and boundedness of the $LU$-factorization of an infinite stiffness matrix. 
The second one in Section~\ref{section:explu} is that a certain exponential decay property of infinite matrices
guarantees the boundedness of the $LU$-factors. 

\medskip

Sketch of the proof: The first idea to prove~(A3) and hence the main theorem (Theorem~\ref{thm:opt}) was as follows: Find an $\XX$-orthogonal basis $\bv_1,\bv_2,\ldots$ of $\XX$
such that all the adaptive spaces generated by Algorithm~\ref{algorithm} are spanned by it, i.e., $\XX_\ell={\rm span}\{\bv_1,\ldots,\bv_{N_\ell}\}$ 
for all $\ell\in\N$ and numbers $N_\ell\in\N$. Then, consider the infinite stiffness matrix $A\in\R^{\N\times\N}$, $A_{ij}:=a(\bv_j,\bv_i)$
and rewrite the original problem as an infinite matrix problem. Section~\ref{sec:LUqo} shows that a (block)-$LU$-factorization $A=LU$ with bounded factors implies general quasi-orthogonality~(A3).
A bound on the factors $L$ and $U$ would hence conclude the proof as we already mentioned above that the other requirements (A1), (A2), and (A4) are well-known.

\medskip

The main problem here is that the $LU$-factorization results of Section~\ref{section:lu} are not strong enough to show that the matrix $A$ has a
bounded $LU$-factorization. Hence, we require more structure (bandedness, exponential decay) of the infinite stiffness matrix and therefore have
to replace the orthogonal basis $\bv_1,\bv_2,\ldots$
by a hierarchical (wavelet-type) Riesz basis which is local in a certain sense. Unfortunately, this introduces some technical difficulties as well as the need for mildly graded meshes.
It seems that the topic of the stability of the $LU$-factorization is very complex and not well-understood in the literature. Advances in this direction could significantly reduce the length of this work.

\medskip

One can say that Sections~\ref{sec:LUqo} and~\ref{section:explu} contain the most interesting 
ideas and are problem independent in the sense that they only require a bilinear form and a conforming method to be applicable. This means that the main ideas of this work are expected to carry over to other challenging problems in the domain of adaptive algorithms.  
The remaining sections (particularly Section~\ref{section:basis}) solve the technical details and are tailored to the space $\XX$ in which the Stokes problem is formulated.
The techniques are quite general and might also be useful for other problem spaces. However, the proofs are not likely to carry over directly.

\subsection{LU-factorization and general quasi-orthogonality}\label{sec:LUqo}
Before we proceed, we need to clarify the notion of block-structure of matrices.
\begin{definition}\label{def:blockstruct}
Given a  block structure in the sense that there exist numbers $n_1,n_2,\ldots\in\N$ with $n_1=1$ and $n_i<n_j$ for all $i\leq j$, we denote matrix blocks by
\begin{align*}
M(i,j):= M|_{\{n_i,\ldots,n_{i+1}-1\}\times \{n_j,\ldots,n_{j+1}-1\}}\in\R^{(n_{i+1}-n_i)\times (n_{j+1}-n_j)}.
\end{align*}
By $M[k]\in \R^{(n_{k+1}-1) \times (n_{k+1}-1)}$, we denote the restriction of $M$ to the first $k\times k$ blocks.
\end{definition}
A matrix $M\in\R^{\N\times\N}$ has an $LU$-factorization if
$M=LU$ for matrices $L,U\in\R^{\N\times\N}$ such that
\begin{align*}
L_{ii}=1\text{ and } L_{ij}=U_{ji}=0\quad\text{for all } i,j\in\N,\,i< j.
\end{align*}
Given a block-structure in the sense of Definition~\ref{def:blockstruct}, we say that $M$ has a block-$LU$-factorization if $A=LU$ and $L$ and $U$ satisfy
\begin{align*}
 L(i,i)=I\text{ and } L(i,j) = U(j,i)=0\quad\text{for all } i,j\in\N,\,i< n_r \leq  j\text{ for some }r\in\N.
\end{align*}
A sufficient condition for the existence of a block-$LU$-factorization is
the matrix inf-sup condition 
	\begin{align}\label{eq:matinfsup}
	\sup_{k\in\N}\norm{(M[k])^{-1}}{2}=C_{\rm inf-sup}<\infty.
	\end{align}

The following result connects existence of bounded $LU$-factors with general quasi-orthogonality.
\begin{theorem}\label{thm:luqo2}
	Let there exist a Riesz bases $(w_n)_{n\in\N}$ of $\XX$ and a constant $C>0$ such that all $x=\sum_{n\in\N} \lambda_n w_n\in \XX$ satisfy
	\begin{align}\label{eq:schauder}
	C^{-1}\norm{x}{2}^2\leq \sum_{n\in\N} \lambda_n^2\leq C\norm{x}{2}^2
	\end{align}
	and there holds $\XX_\ell
	={\rm span}\set{w_n}{n\in\{1,\ldots,N_\ell\}}$ for some constants $N_\ell\in\N$ with $N_{\ell}< N_{\ell+1}$ (a block structure in the sense of Definition~\ref{def:blockstruct}).
	Assume that for some $\eps>0$, there exists $M^\eps\in\R^{\N\times\N}$ such that $M_{ij}:=a(w_j,w_i)$, $M\in \R^{\N\times\N}$ satisfies $\norm{M-M^\eps}{2}\leq \eps$.
	If $M$ and $M^\eps$ satisfy~\eqref{eq:matinfsup} and $M^\eps=LU$ has a block-$LU$-factorization such that $U,U^{-1}\colon \ell_2\to \ell_2$ are bounded operators, then there holds general quasi-orthogonality~\eqref{eq:qosum}.
	The constant $C_{\rm qo}>0$ depends only on the basis $(w_n)$, $a$, $C_{\rm inf-sup}$, $C$, and on the norms of $U, U^{-1}$.
\end{theorem}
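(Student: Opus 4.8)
The plan is to translate the abstract functional-analytic setup into an infinite linear-algebra statement and then to read off general quasi-orthogonality from the block-$LU$-structure. First I would fix the coordinate isomorphism $\Phi\colon\XX\to\ell_2$, $x=\sum_n\lambda_n w_n\mapsto(\lambda_n)_n$, which by~\eqref{eq:schauder} is a norm-equivalence with constants depending only on $C$. Under $\Phi$, the exact solution $u$ and each discrete solution $u_\ell$ become coefficient vectors $\mathbf u$ and $\mathbf u_\ell$; because $\XX_\ell={\rm span}\{w_1,\dots,w_{N_\ell}\}$, the vector $\mathbf u_\ell$ is supported in the first $\ell$ blocks, and the Galerkin equations~\eqref{eq:adapsol} read $M[\ell]\,\mathbf u_\ell=\mathbf f[\ell]$ where $\mathbf f=(\dual{f}{w_i})_i$ and $M[\ell]$ is the leading principal block as in Definition~\ref{def:blockstruct}. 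Condition~\eqref{eq:matinfsup} for $M$ guarantees these systems are solvable with uniformly bounded inverses, so $\mathbf u_\ell$ is well-defined and $\mathbf u_\ell\to\mathbf u$; moreover $\norm{u_{k+1}-u_k}{\XX}^2$ and $\norm{u-u_k}{\XX}^2$ are equivalent (up to the fixed constant from~\eqref{eq:schauder}) to $\norm{\mathbf u_{k+1}-\mathbf u_k}{\ell_2}^2$ and $\norm{\mathbf u-\mathbf u_k}{\ell_2}^2$.

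Next I would introduce the perturbed matrix $M^\eps=LU$ and work with the auxiliary Galerkin solutions $\mathbf u_\ell^\eps$ solving $M^\eps[\ell]\mathbf u_\ell^\eps=\mathbf f[\ell]$, controlling the discrepancy $\norm{\mathbf u_\ell-\mathbf u_\ell^\eps}{\ell_2}$ by $\eps$ times $\norm{\mathbf u_\ell}{\ell_2}\lesssim\norm{\mathbf u}{\ell_2}+\norm{\mathbf u-\mathbf u_\ell}{\ell_2}$, using $\norm{M-M^\eps}{2}\le\eps$ together with the uniform bound on $(M[\ell])^{-1}$ and $(M^\eps[\ell])^{-1}$ from~\eqref{eq:matinfsup}. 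The $\eps$-terms produced here are exactly what feed into the $C_{\rm qo}\eps\norm{u-u_k}{\XX}^2$ slack on the left of~\eqref{eq:qosum}; the point is that for $\eps=0$ one gets an exact orthogonality relation and for $\eps>0$ small one gets it up to these controllable perturbations. So the heart of the matter is to prove the telescoping/orthogonality statement
\begin{align*}
 \sum_{k=\ell}^{\ell+N}\norm{\mathbf u_{k+1}^\eps-\mathbf u_k^\eps}{\ell_2}^2\lesssim\norm{\mathbf u^\eps-\mathbf u_\ell^\eps}{\ell_2}^2
\end{align*}
for the $LU$-factorable matrix $M^\eps$, with the implied constant depending only on $\norm{U}{2}$ and $\norm{U^{-1}}{2}$.

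To get that, I would use the block-$LU$-factorization to diagonalize the nesting of the Galerkin projections. Write $M^\eps=LU$; then the leading block decomposition $M^\eps[\ell]=L[\ell]\,U[\ell]$ holds because $L$ is block-lower-triangular with identity diagonal blocks and $U$ is block-upper-triangular (this is the algebraic content of a block-$LU$ factorization). Substituting $\mathbf u_k^\eps=(U[k])^{-1}(L[k])^{-1}\mathbf f[k]$ and exploiting that $(L[k])^{-1}\mathbf f[k]$ is the restriction of the fixed vector $L^{-1}\mathbf f$ to the first $k$ blocks — i.e. the components stabilize as $k$ grows — I expect the increments $\mathbf u_{k+1}^\eps-\mathbf u_k^\eps$ to be "$U$-orthogonal": more precisely, after applying $U$ one obtains a genuinely telescoping sequence whose increments are supported on disjoint blocks, so that $\sum_k\norm{U(\mathbf u_{k+1}^\eps-\mathbf u_k^\eps)}{\ell_2}^2=\norm{U(\mathbf u^\eps-\mathbf u_\ell^\eps)}{\ell_2}^2$ exactly. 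Dividing by $\norm{U^{-1}}{2}^{-2}$ on the left and multiplying by $\norm{U}{2}^2$ on the right converts this into the desired $\ell_2$-inequality, and undoing the perturbation bookkeeping and the coordinate isomorphism yields~\eqref{eq:qosum} with $C_{\rm qo}$ depending only on $(w_n)$, $a$, $C_{\rm inf-sup}$, $C$, $\norm{U}{2}$, $\norm{U^{-1}}{2}$.

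The main obstacle I anticipate is precisely establishing the "$U$-orthogonality of increments" cleanly: one must argue carefully that $U(\mathbf u_{k+1}^\eps)$ agrees with $U(\mathbf u_k^\eps)$ on the first $k$ blocks and differs only on block $k+1$, which relies on the triangular structure of $L$ and $U$ interacting correctly with the nested restrictions $M^\eps[k]$ — in particular that forming the leading principal block commutes with the block-$LU$ factorization. Getting the bookkeeping of which components "freeze" right, while simultaneously tracking the $\eps$-perturbations so they end up exactly as the $\eps\norm{u-u_k}{\XX}^2$ terms (and not, say, as $\eps\norm{u-u_{k+1}}{\XX}^2$ or non-summable remainders), is the delicate part; the rest is routine norm-equivalence and Neumann-series estimates.
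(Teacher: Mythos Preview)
Your overall strategy matches the paper's: transfer to $\ell_2$ via the Riesz basis, prove exact telescoping for the $LU$-factorable matrix $M^\eps$ using that $U\mathbf u_k^\eps$ freezes on the first $k$ blocks (this is precisely the content of Lemma~\ref{lem:luqo}), and then control the perturbation. The ``$U$-orthogonality of increments'' step is correctly identified and your description of why $M^\eps[\ell]=L[\ell]U[\ell]$ is the right mechanism.

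There is, however, a genuine gap in your perturbation bookkeeping. You define $\mathbf u_\ell^\eps$ by $M^\eps[\ell]\mathbf u_\ell^\eps=\mathbf f[\ell]$ with the \emph{original} right-hand side $\mathbf f$. This yields $\norm{\mathbf u_\ell-\mathbf u_\ell^\eps}{\ell_2}\lesssim\eps\norm{\mathbf u_\ell}{\ell_2}$, and you then bound $\norm{\mathbf u_\ell}{\ell_2}\le\norm{\mathbf u}{\ell_2}+\norm{\mathbf u-\mathbf u_\ell}{\ell_2}$. The term $\eps\norm{\mathbf u}{\ell_2}$ is a fixed constant independent of $k$; when you sum the squared perturbations over $k=\ell,\ldots,\ell+N$ you pick up a contribution of order $N\eps^2\norm{\mathbf u}{\ell_2}^2$, which is unbounded in $N$ and cannot be absorbed into either side of~\eqref{eq:qosum}. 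So the claim that ``the $\eps$-terms produced here are exactly what feed into the $C_{\rm qo}\eps\norm{u-u_k}{\XX}^2$ slack'' is false as stated.

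The paper's fix is to change the right-hand side of the auxiliary problem: set $\mathbf f^\eps:=M^\eps\mathbf u$ (equivalently, define $a^\eps(x,y):=\dual{M^\eps x}{y}_{\ell_2}$ and take the exact solution of the $\eps$-problem to be $\mathbf u$ itself). Then $\mathbf u^\eps=\mathbf u$, and for $\mu$ supported in the first $k$ blocks one computes, using Galerkin orthogonality for \emph{both} $a^\eps$ and $a$,
\begin{align*}
a^\eps(\mathbf u_k^\eps-\mathbf u_k,\mu)=a^\eps(\mathbf u-\mathbf u_k,\mu)=\dual{(M^\eps-M)(\mathbf u-\mathbf u_k)}{\mu}_{\ell_2},
\end{align*}
which gives $\norm{\mathbf u_k^\eps-\mathbf u_k}{\ell_2}\lesssim\eps\norm{\mathbf u-\mathbf u_k}{\ell_2}$ directly, with no residual $\norm{\mathbf u}{\ell_2}$ term. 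This is the missing idea; once you make this change, the rest of your outline goes through.
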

We postpone the proof of Theorem~\ref{thm:luqo2} to the end of this section.

\begin{lemma}\label{lem:luqo}
	Let $(w_n)_{n\in\N}$ denote the Riesz basis from the statement of Theorem~\ref{thm:luqo2}. 
	If $M_{ij}:=a(w_j,w_i)$ and $M$ has a block-$LU$-factorization with bounded operators $U,U^{-1}\colon \ell_2\to \ell_2$,
	then there holds general quasi-orthogonality~\eqref{eq:qosum} even with $\eps=0$.
	The constant $C_{\rm qo}$ depends only on the basis $(w_n)$, $C_{\rm inf-sup}$, the norm of $a(\cdot,\cdot)$, $C$,  and $\XX$.
\end{lemma}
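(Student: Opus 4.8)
The plan is to work directly with the infinite matrix formulation. Expand $u = \sum_{n} x_n w_n \in \XX$ and $u_\ell = \sum_{n \le N_\ell} x^\ell_n w_n \in \XX_\ell$ in the Riesz basis, and let $x, x^\ell \in \ell_2$ be the coefficient vectors (the vector $x^\ell$ supported on the first $N_\ell$ indices). Using $M_{ij} = a(w_j, w_i)$ and the load vector $b_i := \dual{f}{w_i}$, the Galerkin equations $a(u_\ell, v) = \dual{f}{v}$ for all $v \in \XX_\ell$ become, in block notation with blocks indexed by the cut-offs $N_\ell$, the truncated system $M[\ell]\, x^\ell = b[\ell]$ (identifying the level $\ell$ with the $\ell$-th block via the $N_\ell$), while $Mx = b$ holds for the exact solution. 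By the Riesz-basis equivalence~\eqref{eq:schauder}, all the $\XX$-norms appearing in~\eqref{eq:qosum} are equivalent to the corresponding $\ell_2$-norms of coefficient vectors up to the constant $C$, so it suffices to prove the cumulative estimate
\begin{align*}
 \sum_{k=\ell}^{\ell+N} \norm{x^{k+1}-x^k}{\ell_2}^2 \leq C_{\rm qo}' \norm{x - x^\ell}{\ell_2}^2
\end{align*}
with a constant depending only on $C_{\rm inf-sup}$, the norm of $a$, and $\|U\|_2, \|U^{-1}\|_2$.

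The key mechanism is that the block-$LU$-factorization $M = LU$ makes the nested Galerkin solutions \emph{telescope} in the $U$-transformed coordinates. Concretely, since $L$ is block-lower-triangular with identity diagonal blocks and $U$ is block-upper-triangular, the leading principal block $M[k] = L[k]\,U[k]$ factorizes compatibly, and — this is the crucial algebraic point — the Schur-complement structure forces the quantity $U x^k$ to agree with $Ux$ in its first $k$ blocks. Indeed, writing $y := Ux$ and $y^k := U[k] x^k$, one checks from $L[k] U[k] x^k = b[k]$ and $LUx = b$ together with block-triangularity that $y^k = (y_1, \ldots, y_k, 0, 0, \ldots)$, i.e. $y^k$ is simply the truncation of $y$ to the first $k$ blocks. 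Hence $U x^{k+1} - U x^k$ is supported exactly on block $k+1$ and equals the block-$(k+1)$ piece of $y$, and these pieces are pairwise orthogonal in $\ell_2$ across different $k$. This is the "general quasi-orthogonality in disguise": the increments, after applying $U$, are genuinely orthogonal.

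With this in hand the estimate is almost immediate. I would write
\begin{align*}
 \sum_{k=\ell}^{\ell+N} \norm{x^{k+1}-x^k}{\ell_2}^2
 \leq \norm{U^{-1}}{2}^2 \sum_{k=\ell}^{\ell+N} \norm{U(x^{k+1}-x^k)}{\ell_2}^2
 = \norm{U^{-1}}{2}^2 \Big\| \sum_{k=\ell}^{\ell+N} U(x^{k+1}-x^k) \Big\|_{\ell_2}^2,
\end{align*}
where the last equality uses the block-orthogonality just established; the telescoping sum inside equals $U(x^{\ell+N+1} - x^\ell)$, which in turn is the block-$(\ell+1)$-through-$(\ell+N+1)$ truncation of $y = Ux$, so its norm is bounded by $\|U(x - x^\ell)\|_{\ell_2} + \|U x^{\ell+N+1} - y^{\ell+N+1}\|_{\ell_2}$; using $y^k =$ truncation of $y$ once more, $U x^{\ell+N+1} - U x^\ell$ is itself a sub-block of $y - y^\ell = U(x - x^\ell)$ up to the trailing zero blocks, giving $\le \norm{U}{2}\,\norm{x - x^\ell}{\ell_2}$. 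Chaining the constants and translating back via~\eqref{eq:schauder} and the $\XX$-boundedness of $a$ (which controls $\|M\|_2$ and hence lets us absorb everything into $C_{\rm qo}$) yields~\eqref{eq:qosum} with $\eps = 0$.

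The main obstacle I anticipate is not the inequality chain but the \emph{bookkeeping that establishes $y^k = $ truncation of $y$} — i.e. verifying carefully that the nested discrete solutions really are consistent snapshots of the exact solution after the $U$-change of variables. This requires the block-$LU$ structure to interact correctly with the truncation operators $M[k]$, and one has to use that the inf-sup condition~\eqref{eq:matinfsup} guarantees every $M[k]$ is invertible (so all the $x^k$ and the factorization blocks are well-defined) and that the block sizes $n_i$ are exactly the $N_\ell$ so that "level $k$" and "first $k$ blocks" coincide. Once that compatibility lemma is nailed down, the rest is a short computation. (Theorem~\ref{thm:luqo2} is then the $\eps$-perturbed version: one replaces $M$ by $M^\eps$, runs the same argument for $M^\eps$, and absorbs the $\eps\norm{M - M^\eps}{2}$-type cross terms into the $C_{\rm qo}\eps\norm{u-u_k}{\XX}^2$ slack on the left-hand side of~\eqref{eq:qosum}, which is exactly why that slack term is present.)
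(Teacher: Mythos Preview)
Your proposal is correct and follows essentially the same approach as the paper: pass to coefficient vectors via the Riesz basis, use the block-$LU$-structure together with $M[\ell]=L[\ell]U[\ell]$ and the block-triangularity of $L,U$ to show that $Ux^k$ is the truncation of $Ux$ to the first $k$ blocks, and then exploit the resulting $\ell_2$-orthogonality of the increments $U(x^{k+1}-x^k)$ to sum. The paper's write-up is slightly more direct (it identifies $\norm{U\lambda(k+1)-U\lambda(k)}{\ell_2}=\norm{(U\lambda)|_{\{N_k,\ldots,N_{k+1}-1\}}}{\ell_2}$ and sums immediately rather than going through the telescoping detour), but the key algebraic identity and the final estimate are the same; your anticipated ``bookkeeping'' obstacle is precisely what the paper verifies in the lines leading up to $(U\lambda)_i=(U\lambda(\ell))_i$ for $i\le N_\ell$.
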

\begin{proof}
In the following proof, we identify vectors $\lambda\in \R^N$ with vectors $\lambda\in\R^\N$ by adding zero entries on the right, i.e.,
$\lambda = (\lambda_1,\ldots,\lambda_N, 0,\ldots)$. The same is done with matrices $M\in\R^{N\times N}$ by embedding them into the upper-left corner of an infinite zero matrix.
	By~\eqref{eq:schauder} and with~\eqref{eq:infsup}--\eqref{eq:discinfsup}, the matrix $M$ is bounded and satisfies~\eqref{eq:matinfsup}.
	Let $u=\sum_{n=1}^\infty \lambda_n w_n$ and $u_\ell=\sum_{n=1}^\infty \lambda(\ell)_n w_n$.
	With $\lambda:=(\lambda_1,\lambda_2,\ldots)$ and $\lambda(\ell):=(\lambda(\ell)_1,\lambda(\ell)_{2},\ldots,\lambda(\ell)_{N_\ell})$, $F:=(\dual{f}{w_1},\dual{f}{w_2},\ldots)\in\R^\N$,
	there holds
	\begin{align}\label{eq:l2sol}
	M\lambda=F\quad\text{\rm and}\quad M[\ell]\lambda(\ell)=F[\ell]\quad\text{\rm for all }\ell\in\N,
	\end{align}
	where $F[\ell]:=(F_1,\ldots,F_{N_\ell})$.  Moreover, there holds $M[\ell]=L[\ell]U[\ell]$ for any block-$LU$-factorization.
	Due to the lower-triangular structure of $L$ (note that $L(i,i)=I$), there holds for all $1\leq i\leq N_\ell$ that
	\begin{align*}
	(L[\ell]U[\ell]\lambda(\ell))_i=(M[\ell]\lambda(\ell))_i=F[\ell]_i=F_i=(M\lambda)_i=(LU\lambda)_i=(L[\ell]U\lambda)_i.
	\end{align*}
	Since $L$ and hence also $L[\ell]$ is regular, this shows that $(U\lambda)_i=(U[\ell]\lambda(\ell))_i$ for all $1\leq i\leq N_\ell$.
	Moreover, there holds $U[\ell]\lambda(\ell)=U\lambda(\ell)$ due to the block-upper triangular structure of $U$. Altogether, this proves
	\begin{align*}
	(U\lambda)_i=(U\lambda(\ell))_i\quad\text{for all }1\leq i\leq N_\ell\quad\text{and}\quad (U\lambda(\ell))_i=0\quad\text{for all }i>N_\ell.
	\end{align*}
	Hence, we have, by use of the boundedness of $U$ and $U^{-1}$ and~\eqref{eq:schauder}, that
	\begin{align*}
	\norm{u_{k+1}-u_k}{\XX}&\simeq \norm{\lambda(k+1)-\lambda(k)}{\ell_2}\\
	&\simeq \norm{U\lambda(k+1)-U\lambda(k)}{\ell_2}=
	\norm{(U\lambda)|_{\{N_k,\ldots,N_{k+1}-1\}}}{\ell_2}.
	\end{align*}
	This shows
	\begin{align*}
	\sum_{k=\ell}^\infty \norm{u_{k+1}-u_k}{\XX}^2&\simeq \sum_{k=\ell}^\infty \norm{(U\lambda)|_{\{N_k,\ldots,N_{k+1}-1\}}}{\ell_2}^2= \norm{(U\lambda)|_{\{N_\ell,\ldots\}}}{\ell_2}^2\\
	&=\norm{U\lambda - U\lambda(\ell)}{\ell_2}^2\simeq \norm{\lambda - \lambda(\ell)}{\ell_2}^2\simeq \norm{u-u_\ell}{\XX}^2.
	\end{align*}
	Hence, we conclude the proof.
\end{proof}

\begin{proof}[Proof of Theorem~\ref{thm:luqo2}]
In the following, instead of function spaces, we consider a $\ell_2$-setting of infinite vectors and apply Lemma~\ref{lem:luqo} to it, i.e.,
$\XX=\ell_2$.
	With~\eqref{eq:schauder}, the boundedness of $a(\cdot,\cdot)$ implies $\norm{M}{2}+\norm{M^\eps}{2}<\infty$.
	With the notation from the proof of Lemma~\ref{lem:luqo}, we apply Lemma~\ref{lem:luqo} to the bilinear form $a^\eps\colon \ell_2\times\ell_2\to\R$, $a^\eps(x,y):=\dual{M^\eps x}{y}_{\ell_2}$
	and $f^\eps:=M^\eps \lambda\in\ell_2$ and the spaces $\XX_\ell:=\set{x\in\ell_2}{x_i=0\text{ for }i>N_\ell}$. Boundedness of $M^\eps$ together with~\eqref{eq:matinfsup} imply boundedness and 
	the inf-sup condition~\eqref{eq:infsup} for $a^\eps(\cdot,\cdot)$.
	We use the $\ell_2$ unit vectors as the Riesz basis to obtain with Lemma~\ref{lem:luqo}
	\begin{align}\label{eq:lambdaqo}
	\sum_{k=\ell}^\infty \norm{\lambda^\eps(k+1)-\lambda^\eps(k)}{\ell_2}^2 \lesssim \norm{\lambda - \lambda^\eps(\ell)}{\ell_2}^2.
	\end{align}
	Here, we used that $\lambda^\eps=\lambda$ by definition of $f^\eps$.
	
	As in the previous proof, we identify vectors in $\R^n$ with vectors in $\R^\N$ by adding zeros to the right. Then, there holds with~\eqref{eq:infsup}
	\begin{align*}
	\norm{\lambda^\eps(k)-\lambda(k)}{\ell_2}&\lesssim \sup_{\stacktwo{\mu\in \XX_k}{\norm{\mu}{\ell_2}=1}}a^\eps(\lambda^\eps(k)-\lambda(k),\mu)\\
	&= \sup_{\stacktwo{\mu\in \XX_k}{\norm{\mu}{\ell_2}=1}}a^\eps(\lambda-\lambda(k),\mu)\\
	&= \sup_{\stacktwo{\mu\in \XX_k}{\norm{\mu}{\ell_2}=1}}\dual{(M^\eps-M)(\lambda-\lambda(k))}{\mu}_{\ell_2}\\
	&\leq \norm{M^\eps-M}{2}\norm{\lambda-\lambda(k)}{\ell_2}.
	\end{align*}
	Hence, we have
	\begin{align*}
	\big|\norm{\lambda^\eps(k+1)-\lambda^\eps(k)}{\ell_2}-\norm{\lambda(k+1)-\lambda(k)}{\ell_2}\big|\lesssim\eps(\norm{\lambda-\lambda(k)}{\ell_2}+\norm{\lambda-\lambda(k+1)}{\ell_2}),
	\end{align*}
	where the hidden constant is independent of $\eps>0$.
	With~\eqref{eq:lambdaqo} and~\eqref{eq:schauder}, this concludes
	\begin{align*}
	 \sum_{k=\ell}^\infty \Big(\norm{u_{k+1} - u_k}{\XX}^2-2C^2\eps\norm{u-u_k}{\XX}^2\Big)\lesssim C^2\norm{u-u_\ell}{\XX}^2
	\end{align*}
and hence the assertion follows.
\end{proof}

\subsection{Jaffard class matrices and $LU$-factorization}\label{section:explu}
This section discusses the second main insight of this work, namely that a particular mixture of exponential decay and block-bandedness ensures bounded $LU$-factorization. (This may not be viewed as new in the general matrix context, however, the available literature does not seem to be general enough to be applicable to this problem.)

Jaffard class matrices generalize the notion of matrices which decay exponentially away from the diagonal. 
The generalization allows to replace the distance $|i-j|$ between indices by a general metric $d(i,j)$. This class was introduced and analyzed in~\cite{rescue}.

\begin{definition}[Jaffard class]\label{def:wavelet}
	We say that an infinite matrix $M\in\R^{\N\times\N}$ is of Jaffard class, $M\in \JJ(d,\gamma,C)$ for
	some metric $d(\cdot,\cdot)\colon \N\times \N\to [0,\infty)$ and some $\gamma>0$ if for all $0<\gamma^\prime<\gamma$ there exists $C({\gamma^\prime})>0$ such that
	\begin{align}\label{eq:Mexp}
	|M_{ij}|\leq C({\gamma^\prime}) \exp(-\gamma^\prime d(i,j))\quad\text{for all }i,j\in\N.
	\end{align}
	Moreover, the metric $d(\cdot,\cdot)$ must satisfy for all $\eps>0$ 
	\begin{align}\label{eq:Mdist}
	\sup_{i\in\N}\sum_{j\in\N}\exp(-\eps d(i,j))<\infty.
	\end{align}
	We also write $M\in \JJ$ to state the existence of parameters $d,\gamma,C$ such that $M\in \JJ(d,\gamma,C)$.
\end{definition}

\begin{definition}[banded matrix]\label{def:banded}
	We say that an infinite matrix $M\in\R^{\N\times\N}$ is banded with respect to some metric $d(\cdot,\cdot)\colon \N\times \N\to [0,\infty)$ if there exists a bandwidth $b\geq 1$ such that
	\begin{align}\label{eq:banded}
	d(i,j)> b \quad\implies\quad M_{ij}=0\quad\text{for all }i,j\in\N.
	\end{align}
	In this case, we write $M\in \BB(d,b)$.
	Note that we do not require $d(\cdot,\cdot)$ to satisfy~\eqref{eq:Mdist}.
	We also write $M\in \BB$ or $M\in\BB(d)$ to state that the missing parameters exist.
\end{definition}	 

The following technical lemmas state some straightforward facts about infinite matrices and can be found in~\cite{fembemopt}.
\begin{lemma}\label{lem:bandmult}
	Let $M^{i,j}\in \BB(d,b_{j})$, $i=1,\ldots,n$, $j=1,\ldots,m$ for some $m,n\in\N$ with respect to some metric $d(\cdot,\cdot)$ and respective bandwidths $b_{j}\in\N$.
	Then, there holds 
	\begin{align*}
	\sum_{i=1}^n \prod_{j=1}^m M^{i,j}\in\BB(d,\sum_{j=1}^m b_j). 
	\end{align*}
\end{lemma}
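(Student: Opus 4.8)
\textbf{Proof plan for Lemma~\ref{lem:bandmult}.}
The plan is to reduce the statement about a finite sum of products to a single product and then to a single matrix multiplication, handling the bandwidth bookkeeping at each reduction. First I would observe that the class $\BB(d,b)$ is closed under addition of matrices that are all banded with a common bandwidth: if $M,N\in\BB(d,b)$, then $d(i,j)>b$ forces $M_{ij}=N_{ij}=0$, hence $(M+N)_{ij}=0$, so $M+N\in\BB(d,b)$. Applying this to the $n$ summands $\prod_{j=1}^m M^{i,j}$, it suffices to show each such product lies in $\BB(d,\sum_{j=1}^m b_j)$, since then the sum lies in the same class (the common bandwidth $\sum_j b_j$ does not depend on $i$).

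Next I would treat the product by induction on $m$, the base case $m=1$ being trivial. For the inductive step it is enough to prove the two-factor statement: if $P\in\BB(d,b)$ and $Q\in\BB(d,b')$ then $PQ\in\BB(d,b+b')$. To see this, fix $i,k$ with $d(i,k)>b+b'$ and write $(PQ)_{ik}=\sum_{j\in\N}P_{ij}Q_{jk}$. A term is nonzero only if both $P_{ij}\neq0$ and $Q_{jk}\neq0$, which by the band hypotheses requires $d(i,j)\leq b$ and $d(j,k)\leq b'$; the triangle inequality for the metric $d$ then gives $d(i,k)\leq d(i,j)+d(j,k)\leq b+b'$, contradicting $d(i,k)>b+b'$. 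Hence every term vanishes and $(PQ)_{ik}=0$, so $PQ\in\BB(d,b+b')$. (One should note that the matrix product $PQ$ is well defined here since, for fixed $i$, only finitely many $P_{ij}$ are nonzero — this is exactly the finite-range property built into bandedness — so no convergence issue arises, and this is why~\eqref{eq:Mdist} is not needed.) Feeding this back into the induction, $\prod_{j=1}^m M^{i,j}\in\BB(d,\sum_{j=1}^m b_j)$, and combined with the closure-under-sums observation above the claim follows.

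I do not anticipate a genuine obstacle: the only subtlety is making sure the metric's triangle inequality is the property actually used (it is, and $d$ is assumed to be a metric), and that all the matrix products in sight are legitimate, which follows from bandedness giving finitely many nonzero entries per row. If one wanted to be maximally careful, one would also check that the order of summation in iterated products is immaterial, which again is immediate from finiteness of the supports. Since the lemma is quoted from~\cite{fembemopt} as a "straightforward fact," I would keep the write-up to essentially the two displayed chains of reasoning above.
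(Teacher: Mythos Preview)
Your argument is correct and is the expected one: closure of $\BB(d,b)$ under sums is immediate, and for a product of two factors the triangle inequality for the metric $d$ forces every summand $P_{ij}Q_{jk}$ to vanish once $d(i,k)>b+b'$, whence induction handles $m$ factors. The paper does not give its own proof but simply cites~\cite{fembemopt}, so there is nothing to compare against; your write-up is precisely the straightforward verification one would expect. One small inaccuracy in your parenthetical: bandedness alone does \emph{not} force rows to have finitely many nonzero entries, since Definition~\ref{def:banded} explicitly drops condition~\eqref{eq:Mdist} and hence $d$-balls $\{j:d(i,j)\le b\}$ need not be finite --- but this does not affect the bandedness conclusion, because for $d(i,k)>b+b'$ every term in the sum is zero anyway.
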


\begin{lemma}\label{lem:norm}
	Let $M\in\JJ(d,\gamma,C)$, then $|M|\colon \ell_2\to\ell_2$ is a bounded operator
	(the modulus $|M|$ is understood entry wise).
\end{lemma}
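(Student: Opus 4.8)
\textbf{Proof strategy for Lemma~\ref{lem:norm}.}
The plan is to estimate the operator norm of $|M|$ by the Schur test. First I would fix $0<\gamma'<\gamma$ and use the Jaffard decay~\eqref{eq:Mexp} to bound each entry by $|M_{ij}|\le C(\gamma')\exp(-\gamma' d(i,j))$. Since $|M|$ is symmetric-looking in the roles of rows and columns only through $d$, and $d$ need not be symmetric, I would actually apply the Schur test with the two weight sums: the row sums $\sup_i\sum_j |M_{ij}|$ and the column sums $\sup_j\sum_i|M_{ij}|$. Both are controlled by $C(\gamma')\sup_i\sum_j\exp(-\gamma' d(i,j))$ and $C(\gamma')\sup_j\sum_i\exp(-\gamma' d(i,j))$ respectively, which are finite by the summability property~\eqref{eq:Mdist} applied with $\eps=\gamma'$ (for the column version one either assumes $d$ symmetric as is implicit in the paper's usage, or notes that $d$ being a metric gives the symmetric statement directly).

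The key steps, in order, are: (i) invoke~\eqref{eq:Mexp} with a fixed admissible $\gamma'$ to get the pointwise bound on $|M_{ij}|$; (ii) compute $R:=\sup_i\sum_{j\in\N}|M_{ij}|\le C(\gamma')\sup_i\sum_{j\in\N}\exp(-\gamma' d(i,j))=:C(\gamma')K<\infty$ using~\eqref{eq:Mdist}; (iii) by symmetry of the metric $d$, equally $\sup_j\sum_{i\in\N}|M_{ij}|\le C(\gamma')K$; (iv) conclude via the Schur test that $\||M|\|_2\le C(\gamma')K$, so $|M|$ maps $\ell_2$ to $\ell_2$ boundedly. Concretely, for $x=(x_i)\in\ell_2$ one writes, using Cauchy–Schwarz on the splitting $|M_{ij}|=|M_{ij}|^{1/2}|M_{ij}|^{1/2}$,
\begin{align*}
\sum_i\Big(\sum_j |M_{ij}||x_j|\Big)^2
\le \sum_i\Big(\sum_j|M_{ij}|\Big)\Big(\sum_j|M_{ij}||x_j|^2\Big)
\le R\sum_j|x_j|^2\sum_i|M_{ij}|\le R^2\norm{x}{\ell_2}^2,
\end{align*}
which gives the bound directly without even separately invoking the abstract Schur test.

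I do not expect a genuine obstacle here: this is the standard argument that a summable off-diagonal decay condition implies $\ell_2$-boundedness, and~\eqref{eq:Mdist} is precisely tailored to make it go through. The only point requiring a word of care is that the metric appearing in the column sum is the same one appearing in~\eqref{eq:Mdist}; since $d$ is assumed to be a metric it is symmetric, so the row-sum estimate immediately yields the column-sum estimate, and the Cauchy–Schwarz computation above then closes the argument.
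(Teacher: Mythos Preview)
Your proposal is correct and is exactly the standard Schur test argument for Jaffard-class matrices: bound entries via~\eqref{eq:Mexp}, use~\eqref{eq:Mdist} (together with symmetry of the metric $d$) to control both row and column sums uniformly, and conclude $\ell_2$-boundedness via the Cauchy--Schwarz computation you wrote out. The paper does not give its own proof here but cites~\cite{fembemopt}; the argument there is essentially the same as yours.
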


\begin{lemma}\label{lem:invbound}
	Let $M\in\JJ(d,\gamma,C)$ and assume a block structure $n_1,n_2,\ldots\in\N$ such that
	$M$ satisfies~\eqref{eq:matinfsup}.
	Then, 
	\begin{align*}
	\overline{M}\in\R^{\N\times\N}\quad\text{with}\quad	\overline{M}_{ij}:= \sup_{\stacktwo{k\in\N}{ n_k\geq \max\{i,j\}}}|(M[k])^{-1}|_{ij}
	\end{align*}
	is in $\JJ(d,\widetilde \gamma,\widetilde C)$ and thus a bounded operator $\overline{M}\colon \ell_2\to\ell_2$.
	The constant $\widetilde\gamma$ depends only on $C_{\rm inf-sup}>0$, $d$, and $\gamma$, whereas for all $0<\gamma^\prime< \widetilde \gamma$, $\widetilde C(\gamma^\prime)$ depends only on an upper bound for $C(\gamma^\prime)$
	and on $C_{\rm inf-sup}>0$.
\end{lemma}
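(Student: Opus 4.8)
The plan is to show that the entrywise supremum $\overline{M}$ inherits the Jaffard (exponential-type) decay of $M$ by bootstrapping the Neumann-series idea used for finite matrices, combined with the uniform inf-sup bound~\eqref{eq:matinfsup}. First I would fix $k\in\N$ and study $(M[k])^{-1}$. Split $M[k]=D_k+R_k$ where $D_k$ collects the "near-diagonal" part (entries with $d(i,j)\le b$ for a bandwidth $b$ to be chosen) and $R_k$ is the remainder. Because $M\in\JJ(d,\gamma,C)$, the remainder $R_k$ has small spectral norm once $b$ is large: indeed $\norm{R_k}{2}$ is controlled by $\sup_i\sum_j C(\gamma')\exp(-\gamma' d(i,j))\mathbf{1}_{d(i,j)>b}$, which tends to $0$ as $b\to\infty$ by~\eqref{eq:Mdist} (a Schur-test / row-sum bound; cf.\ Lemma~\ref{lem:norm}). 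However $D_k$ itself need not be invertible, so one cannot directly write a Neumann series around $D_k$. Instead I would use the uniform inf-sup bound: $\norm{(M[k])^{-1}}{2}\le C_{\rm inf-sup}$ for all $k$, which is the hypothesis~\eqref{eq:matinfsup}. The clean route is the classical resolvent identity trick: write $(M[k])^{-1}=(M[k])^{-1}$ and expand against a banded approximation. Concretely, let $B_b$ be the truncation of $M[k]$ to the band $d(i,j)\le b$; then $M[k]=B_b+ E_b$ with $\norm{E_b}{2}\le \delta(b)\to 0$, so for $b$ large enough $\norm{(M[k])^{-1}E_b}{2}\le C_{\rm inf-sup}\,\delta(b)\le 1/2$, and the identity $(M[k])^{-1}=(I+(M[k])^{-1}E_b)^{-1}(M[k])^{-1}$ is not yet what we want; rather I would iterate $(M[k])^{-1}=\sum_{m\ge 0}(-(B_b[k])^{-1}E_b)^m (B_b[k])^{-1}$ only once $B_b[k]$ is known invertible.

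The resolution is a standard but slightly delicate localization argument, which I expect to be the main obstacle: one shows that $(M[k])^{-1}$ decays like $\exp(-\gamma'' d(i,j))$ directly from the combination of $\norm{(M[k])^{-1}}{2}\le C_{\rm inf-sup}$ and the Jaffard decay of $M[k]$. The cleanest tool is the \emph{Combes--Thomas estimate} / the Jaffard-algebra property: the set of Jaffard-class matrices (with a fixed metric $d$ satisfying~\eqref{eq:Mdist}) forms a Banach algebra, and — crucially — it is inverse-closed, so if $M[k]$ lies in this algebra uniformly and is invertible with $\norm{(M[k])^{-1}}{2}$ uniformly bounded, then $(M[k])^{-1}$ lies in a possibly smaller Jaffard class $\JJ(d,\widetilde\gamma,\widetilde C)$ with $\widetilde\gamma,\widetilde C$ depending only on $\gamma$, $C(\gamma')$, the constant in~\eqref{eq:Mdist}, and $C_{\rm inf-sup}$, but \emph{not} on $k$. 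This is exactly the content of the Jaffard-class theory from~\cite{rescue}. The $k$-independence of $\widetilde\gamma,\widetilde C$ is the key point and follows because every quantitative ingredient (the algebra constant, the inverse-closedness radius, the norm bound) is $k$-uniform by hypothesis.

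Having the uniform bound $|(M[k])^{-1}|_{ij}\le \widetilde C(\gamma')\exp(-\gamma' d(i,j))$ for all $k$ with $n_k\ge\max\{i,j\}$ and all $0<\gamma'<\widetilde\gamma$, taking the supremum over such $k$ gives $\overline{M}_{ij}\le \widetilde C(\gamma')\exp(-\gamma' d(i,j))$, i.e.\ $\overline{M}\in\JJ(d,\widetilde\gamma,\widetilde C)$. Finally, boundedness of $\overline{M}\colon\ell_2\to\ell_2$ is immediate from Lemma~\ref{lem:norm} applied to $\overline{M}$ (note $\overline{M}=|\overline{M}|$ since its entries are nonnegative). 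So the structure of the argument is: (i) reduce to showing each $(M[k])^{-1}$ has uniform Jaffard decay; (ii) invoke inverse-closedness of the Jaffard class together with~\eqref{eq:matinfsup} to get this with $k$-uniform constants; (iii) pass to the supremum and apply Lemma~\ref{lem:norm}. The only real work is citing/assembling the quantitative inverse-closedness statement in the precise block-structured form needed here, tracking that all constants depend only on the advertised quantities.
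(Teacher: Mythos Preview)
Your proposal is correct and, after the initial Neumann-series detour (which you rightly abandon), takes essentially the same approach as the paper: invoke the inverse-closedness result \cite[Proposition~2]{rescue} to get $(M[k])^{-1}\in\JJ(d,\widetilde\gamma,\widetilde C)$ with $k$-uniform constants depending only on $\gamma$, $d$, $C(\gamma')$, and $C_{\rm inf-sup}$, pass to the supremum, and conclude boundedness via Lemma~\ref{lem:norm}. The paper's proof is exactly this three-line argument, so you may drop the band-splitting preamble entirely.
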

\begin{proof} 
	The result~\cite[Proposition~2]{rescue}
	shows that $(M[k])^{-1}\in\JJ(d,\widetilde \gamma,\widetilde C)$. Inspection of the proof reveals that
	$\widetilde\gamma$ depends only on $\gamma$, $d$, and $\widetilde C(\gamma^\prime)$ depends only on an upper bound for $C(\gamma^\prime)$ from Definition~\ref{def:wavelet}
	and on $C_{\rm inf-sup}>0$. Therefore, we have for all $0<\gamma^\prime<\widetilde \gamma$
	\begin{align*}
	|\overline{M}_{ij}| \leq \widetilde C(\gamma^\prime)\exp(-\gamma^\prime d(i,j))\quad\text{for all }i,j\in\N
	\end{align*}
	and hence $\overline{M}\in\JJ(d,\widetilde\gamma,\widetilde C)$. Lemma~\ref{lem:norm} concludes the proof.
\end{proof}

\subsubsection{LU-factorization}\label{section:lu}

The following results show existence of a bounded $LU$-factorization for particular Jaffard class matrices.
\begin{theorem}\label{thm:LU}
	Let $M\in\R^{\N\times \N}\in\JJ(d,\gamma,C)$ and additionally satisfy~\eqref{eq:matinfsup} for some given block-structure
	in the sense of Definition~\ref{def:blockstruct}.
	Then, $M$ has a block-$LU$-factorization such that $L,U,L^{-1},U^{-1}\colon \ell_2\to\ell_2$ are bounded operators with operator norms depending only
	on $\JJ(d,\gamma,C)$, $C_{\rm inf-sup}$, and $\norm{M}{2}$.
\end{theorem}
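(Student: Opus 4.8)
The plan is to construct the block-$LU$-factorization explicitly via the Schur-complement recursion and then control the decay of the factors using the Jaffard-class machinery from Section~\ref{section:explu}. First I would establish existence of the factorization: since $M$ satisfies the matrix inf-sup condition~\eqref{eq:matinfsup}, every leading principal block $M[k]$ is invertible, and the classical recursion for block-$LU$ goes through. Writing $M[k]$ in $2\times 2$ block form with the first block of size $n_2-n_1$, one has $U(1,j) = M(1,j)$, $L(i,1) = M(i,1) M(1,1)^{-1}$, and the trailing blocks satisfy the Schur-complement update $M^{(1)}(i,j) = M(i,j) - M(i,1)M(1,1)^{-1}M(1,j)$; iterating this produces $L$ and $U$ with $L(i,i)=I$. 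The inf-sup condition on all Schur complements (which follows from that on $M$, since the Schur complements are again leading-block structures of inverses of leading blocks, cf.\ the standard identity relating Schur complements to blocks of the inverse) guarantees the recursion never breaks down.

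Next I would identify the $LU$-factors with explicit sub-blocks of inverses of leading principal blocks. The key algebraic facts are: the diagonal blocks of $U$ are exactly the Schur complements, so $U(k,k)^{-1}$ is a sub-block of $(M[k])^{-1}$; the strictly-upper part of $U$ in block-row $k$ is $U(k,k)$ times a sub-block of a row of $(M[k'])^{-1}$ for an appropriate $k'$; and similarly $L$'s strictly-lower blocks are sub-blocks of columns of such inverses times $U$-diagonal inverses. Concretely, using the cofactor-type formulas one writes each entry of $L$, $U$, $L^{-1}$, $U^{-1}$ as a finite product/sum of entries drawn from matrices of the form $(M[k])^{-1}$, $M$, and $\overline M$ (the envelope from Lemma~\ref{lem:invbound}). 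By Lemma~\ref{lem:invbound}, $\overline M\in\JJ(d,\widetilde\gamma,\widetilde C)$, and since $M\in\JJ(d,\gamma,C)$ as well, the product of two Jaffard-class matrices is again Jaffard class (with a possibly smaller decay rate governed by~\eqref{eq:Mdist}); so the entrywise bounds on $L$, $U$, $L^{-1}$, $U^{-1}$ are all of the form $\widehat C(\gamma')\exp(-\gamma' d(i,j))$. Then Lemma~\ref{lem:norm} immediately yields boundedness of all four operators $\ell_2\to\ell_2$, with norms depending only on $\JJ(d,\gamma,C)$, $C_{\rm inf-sup}$, and $\norm{M}{2}$ (the latter entering the Jaffard constant of $M$ itself).

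The main obstacle I expect is the bookkeeping needed to show that $L$, $U$ and their inverses really do inherit the Jaffard decay \emph{uniformly} — that is, with decay rate and constant independent of which leading block one truncates at. The recursion for the Schur complements is only finitely many steps for each fixed entry, but a naive estimate would degrade the decay rate by a fixed amount per step and hence blow up; the point of routing everything through $\overline M$ (the supremum over all admissible truncations) rather than through individual $(M[k])^{-1}$ is exactly to avoid this accumulation, since $\overline M$ is Jaffard class once and for all. One must also verify that the block structure interacts correctly with the metric $d$, i.e.\ that the envelope $\overline M$ and the products stay within a \emph{single} metric $d$ with the summability~\eqref{eq:Mdist}; this is where the hypothesis that $M$ is already Jaffard with respect to $d$, combined with Lemma~\ref{lem:bandmult}-style closure properties, does the work. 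Once the uniform entrywise Jaffard bound is in hand, the operator-norm conclusion is routine via Lemma~\ref{lem:norm}.
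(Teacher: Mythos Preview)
Your overall strategy is sound and you correctly identify the decisive tool, namely the envelope matrix $\overline{M}$ from Lemma~\ref{lem:invbound}. However, the paper's argument is considerably more economical than what you sketch, and your description of the formulas for $U$ is not quite right.

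The paper does \emph{not} attempt to show that all four of $L,U,L^{-1},U^{-1}$ lie in the Jaffard class. Instead it bounds only $U^{-1}$, and only in operator norm: the algebraic identity $U^{-1}(i,k)=M[k]^{-1}(i,k)$ (which is~\eqref{eq:luid}, a purely structural fact about block-$LU$) gives the entrywise domination $|U^{-1}_{ij}|\le \overline{M}_{ij}$, and then Lemma~\ref{lem:norm} (via Lemma~\ref{lem:invbound}) yields $\norm{U^{-1}}{2}<\infty$ directly. From this one gets $\norm{L}{2}\le\norm{M}{2}\norm{U^{-1}}{2}$ for free. The remaining two bounds come from the \emph{transpose trick}: applying the identical argument to $M^T=\widetilde L\widetilde U$ gives $\norm{\widetilde L}{2}+\norm{\widetilde U^{-1}}{2}<\infty$, and the relation $\widetilde L=U^TD^{-T}$, $\widetilde U=D^TL^T$ (see~\eqref{eq:lut}) together with the bound on the block-diagonal $D$ from Lemma~\ref{lem:blockLU0} then yields $\norm{U}{2}$ and $\norm{L^{-1}}{2}$. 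No closure of the Jaffard class under multiplication is needed anywhere.

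Your route, by contrast, aims at the stronger conclusion that all four factors are themselves Jaffard, which would require more bookkeeping. In particular, your claimed formula that ``the strictly-upper part of $U$ in block-row $k$ is $U(k,k)$ times a sub-block of a row of $(M[k'])^{-1}$'' is not correct as stated; the actual identity is the Schur-complement expression
\[
U(k,j)=M(k,j)-M(k,1{:}k{-}1)\,M[k{-}1]^{-1}\,M(1{:}k{-}1,j),
\]
which does give the entrywise Jaffard bound $|U_{pq}|\le |M_{pq}|+(|M|\,\overline{M}\,|M|)_{pq}$ once you invoke closure of $\JJ$ under products. So your plan can be completed, but it proves more than is required and the formulas you list need correction. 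The paper's transpose trick sidesteps all of this.
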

\begin{proof}
    We repeat the proof of~\cite[Theorem~2]{LU} to show that it also works for block-LU-factorization. The existence of block-lower/block-upper triangular matrices $L,U\in\R^{\N\times\N}$ such that
    $M=LU$ is well-known. The identity~\eqref{eq:luid} shows for $x\in\ell_2$ and $\overline{M}$ from Lemma~\ref{lem:invbound}
    \begin{align*}
     \norm{U^{-1}x}{\ell_2}^2&\leq \sum_{i=1}^\infty\norm{\sum_{k=i}^\infty|M[k]^{-1}(i,k)||x|_{\{n_k,\ldots,n_{k+1}-1\}}|}{\ell_2}^2\\
     &\leq \sum_{i=1}^\infty\norm{\sum_{k=i}^\infty \overline{M}(i,k)|x|_{\{n_k,\ldots,n_{k+1}-1\}}|}{\ell_2}^2\leq \norm{\overline{M}}{2}\norm{|x|}{\ell_2}\lesssim\norm{x}{\ell_2}.
    \end{align*}
    This shows that $\norm{U^{-1}}{2}<\infty$ and we deduce immediately $\norm{L}{2}\leq \norm{M}{2}\norm{U^{-1}}{2}<\infty$.
    Lemma~\ref{lem:blockLU0} shows additionally that the block-diagonal matrix $D(i,i):= U(i,i)$, $i\in\N$ is bounded $\norm{D}{2}<\infty$ in terms of $C_{\rm inf-sup}$ and $\norm{M}{2}$.
    The same argumentation proves for $M^T=\widetilde L\widetilde U$ that $\norm{\widetilde L}{2}+\norm{\widetilde U^{-1}}{2}<\infty$.
    From~\eqref{eq:lut}, we see
    \begin{align*}
     \norm{U}{2}=\norm{\widetilde L}{2}\norm{D^T}{2}<\infty\quad\text{and hence}\quad \norm{L^{-1}}{2}\leq \norm{U}{2}\norm{M^{-1}}{2}<\infty.
    \end{align*}
This concludes the proof.
\end{proof}

Below we present the main theorem of this section. It proves that the $LU$-factors of a banded matrix can be approximated by banded matrices. In some sense, the result is an extension of~\cite{LUpaper} to a particular case.

\begin{theorem}\label{thm:blockLU}
	Let $M\in\BB(d,b_0)$ for some $b_0\in\N$ and some metric $d(\cdot,\cdot)$ such that $M\colon\ell_2\to \ell_2$ is bounded and satisfies~\eqref{eq:matinfsup}
	for some block structure in the sense of Definition~\ref{def:blockstruct}. Let $M$
	be block-banded in the sense $M(i,j)=0$ for $|i-j|>b_0$. Then, for all $\eps>0$,	
	there exists an approximate block-$LDU$-decomposition $\norm{M-LDU}{2}\leq \eps$ for block-lower/block-upper triangular factors $L$, $U$ such that $L(i,i)=U(i,i)=I$ for all $i\in\N$,
	and a block diagonal factor $D$. 
	The factors $L,D,U\colon \ell_2\to \ell_2$ are bounded with bounded inverses uniformly in $\eps$ and satisfy $L^{-1},D,U^{-1}\in \BB(d,b)$ for some bandwidth $b$.
	Moreover, $L^{-1},U^{-1}$ are block-banded with bandwidth $b$, i.e., $L^{-1}(i,j)=U^{-1}(i,j)=0$ for all $|i-j|>b$. Additionally, $D$ satisfies~\eqref{eq:matinfsup}.
	The constant $b$ depends only on $M$, $b_0$, and $\eps$.
\end{theorem}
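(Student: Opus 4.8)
\textbf{Proof plan for Theorem~\ref{thm:blockLU}.}
The plan is to obtain the \emph{exact} block-$LDU$-factorization first and then truncate it to bandwidth $b$, choosing $b$ large enough to make the truncation error at most $\eps$ in spectral norm. For the first step I would invoke the structure results already available: since $M\in\BB(d,b_0)$ is bounded and satisfies the matrix inf-sup condition~\eqref{eq:matinfsup}, a banded matrix has, in particular, exponential off-diagonal decay (trivially, with any rate), so $M\in\JJ(d,\gamma,C)$ for the given metric $d$ (using that $d$ satisfies~\eqref{eq:Mdist}, which is part of the hypothesis that $M$ is bounded on $\ell_2$ via Lemma~\ref{lem:norm}; if $d$ does not \emph{a priori} satisfy~\eqref{eq:Mdist} one works with the metric induced by the block-band structure $|i-j|$ on block indices, under which $M$ is genuinely banded). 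Theorem~\ref{thm:LU} then yields the exact block-$LU$-factorization $M=\widehat L\widehat U$ with $\widehat L,\widehat U,\widehat L^{-1},\widehat U^{-1}$ all bounded on $\ell_2$; splitting off the block diagonal $D(i,i):=\widehat U(i,i)$ (which is bounded with bounded inverse and satisfies~\eqref{eq:matinfsup} by Lemma~\ref{lem:blockLU0}) gives the normalized exact factorization $M=LDU$ with $L(i,i)=U(i,i)=I$.

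The second step is the quantitative decay of the factors. The key observation is the Neumann-type identity already exploited in the proof of Theorem~\ref{thm:LU}: the blocks of $L^{-1}$ and $U^{-1}$ are built from the blocks of the finite-section inverses $M[k]^{-1}$, and Lemma~\ref{lem:invbound} shows that the majorant $\overline M$ of all these finite-section inverses lies in $\JJ(d,\widetilde\gamma,\widetilde C)$, hence decays exponentially off the (block) diagonal at a rate $\widetilde\gamma$ controlled by $C_{\rm inf-sup}$, $d$, and the (trivial) decay rate of $M$. Consequently $L^{-1}$ and $U^{-1}$ themselves have exponentially decaying blocks; likewise $D^{-1}$ is block diagonal, so trivially banded. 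Define $L^{-1}_b$, $U^{-1}_b$ by zeroing all blocks with $|i-j|>b$, and $D$ needs no truncation. Then $L_b^{-1},U_b^{-1}\in\BB(d,b)$ and are block-banded with bandwidth $b$ by construction, and from the exponential decay, $\norm{L^{-1}-L_b^{-1}}{2}+\norm{U^{-1}-U_b^{-1}}{2}\to 0$ as $b\to\infty$ (the tail of a geometric series times a constant depending only on the Jaffard parameters, which are controlled by $M$ and $b_0$). Invert the truncated factors to get $L:=(L_b^{-1})^{-1}$, $U:=(U_b^{-1})^{-1}$; for $b$ large these are small perturbations of the exact $L,U$, hence bounded with bounded inverses uniformly in $\eps$, and $L(i,i)=U(i,i)=I$ is preserved since truncation does not touch the diagonal block (which is $I$) and inversion of a block-lower-triangular matrix with identity diagonal blocks again has identity diagonal blocks.

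The final step is to estimate $\norm{M-LDU}{2}$. Write
\begin{align*}
M-LDU &= LDU\big((LDU)^{-1}M - I\big) = LDU\big(U^{-1}D^{-1}L^{-1}M-I\big),
\end{align*}
and since $M=L_{\rm ex}DU_{\rm ex}$ with $L_{\rm ex},U_{\rm ex}$ the exact normalized factors, the bracket equals $U^{-1}D^{-1}L^{-1}L_{\rm ex}DU_{\rm ex}-I$, which is controlled by $\norm{L^{-1}-L_{\rm ex}^{-1}}{2}$ and $\norm{U^{-1}-U_{\rm ex}^{-1}}{2}$ together with the uniform bounds on all the factors and their inverses. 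Hence $\norm{M-LDU}{2}\lesssim (\norm{L^{-1}-L_{\rm ex}^{-1}}{2}+\norm{U^{-1}-U_{\rm ex}^{-1}}{2})$, and choosing $b=b(\eps,M,b_0)$ large enough makes this $\le\eps$. The bandwidth $b$ thus depends only on $\eps$, on the Jaffard decay rate of $\overline M$ (which depends on $C_{\rm inf-sup}$, $d$, and $b_0$), and on the uniform operator bounds, i.e.\ ultimately only on $M$, $b_0$, and $\eps$, as claimed.

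\textbf{Main obstacle.} The delicate point is not the truncation error estimate but ensuring the \emph{uniform-in-$\eps$} boundedness of the perturbed factors $L=(L_b^{-1})^{-1}$, $U=(U_b^{-1})^{-1}$ and their inverses: one must check that the approximate factorization $LDU$ still satisfies an inf-sup condition (so that $D$, which is not truncated, inherits~\eqref{eq:matinfsup}) and that $L,U$ remain invertible on $\ell_2$. This follows from a Neumann-series / perturbation argument once $b$ is large enough, but it is slightly circular to state cleanly — the perturbation must be small relative to the norms of the exact inverses, and those norms are fixed (independent of $\eps$) by Theorem~\ref{thm:LU}, so choosing $b$ large resolves it. Propagating the exponential decay through matrix inversion (to get $L^{-1},U^{-1}\in\JJ$ from $M\in\JJ$ and the finite-section stability) is exactly the content of Lemma~\ref{lem:invbound} and~\cite[Proposition~2]{rescue}, so that part is handed to us; the bookkeeping to convert "$\JJ$ with trivial rate" into "block-banded after truncation" is routine.
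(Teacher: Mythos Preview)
Your approach has a genuine gap: you lean on the Jaffard-class machinery (Lemma~\ref{lem:invbound}, hence~\cite[Proposition~2]{rescue}) to propagate decay to $L^{-1}$ and $U^{-1}$, but Definition~\ref{def:wavelet} requires the metric $d$ to satisfy the summability condition~\eqref{eq:Mdist}, and Theorem~\ref{thm:blockLU} deliberately does \emph{not} assume this (Definition~\ref{def:banded} even flags that~\eqref{eq:Mdist} is dropped). Your proposed fix---switching to the block-index metric $|i-j|$---does not save the argument: that metric fails~\eqref{eq:Mdist} when block sizes are unbounded, and in any case truncating with respect to it yields only block-bandedness, whereas the theorem demands $L^{-1},D,U^{-1}\in\BB(d,b)$ in the \emph{original} metric $d$ as well. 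This matters for the application: in the proof of Theorem~\ref{thm:qosum} the theorem is invoked with $d=d_2$, which is precisely a metric that is \emph{not} asserted to satisfy~\eqref{eq:Mdist} (only $d_3$ is, see the end of Definition~\ref{def:metric}).

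The paper's route avoids Jaffard class entirely for this theorem. The key device is Lemma~\ref{lem:invapprox}: one writes $(M^TM)^{-1}$ as a Neumann series in $(I-\alpha M^TM)$, truncates the series, and multiplies by $M^T$. Since each summand is a finite product of $d$-banded and block-banded matrices, Lemma~\ref{lem:bandmult} shows the truncated approximants $R_k\approx M[k]^{-1}$ are simultaneously $d$-banded and block-banded, with bandwidth depending only on $b_0$, $\eps$, $\|M\|_2$, and $C_{\rm inf-sup}$---no summability of the metric needed. Lemmas~\ref{lem:blockLU2}--\ref{lem:blockLU3} then assemble banded approximations to $U^{-1}$, $D$, $L^{-1}$ from these $R_k$ via the identity~\eqref{eq:luid}, and the proof of Theorem~\ref{thm:blockLU} combines them into the approximate $LDU$. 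So the missing idea in your plan is replacing ``exponential decay inherited from Jaffard class'' by ``polynomial-in-$M$ approximation of the inverse via Neumann series,'' which manufactures the two bandedness properties directly rather than inferring them from decay.
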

\begin{remark}
 The main idea behind the proof of Theorem~\ref{thm:blockLU} is to exploit the well-known property of the $LU$-factorization to inherit skyline structure and particularly (block-) bandedness of the parent matrix. The theorem goes one step further and proves that the combination of block-bandedness and bandedness in the sense of Definition~\ref{def:banded} can be preserved approximately. We use this fact to show that the approximate $LU$-factors are inversely bounded (which seems to be inaccessible without any additional structure).          
\end{remark}

We postpone the proof of Theorem~\ref{thm:blockLU} to the end of this section.

\begin{lemma}\label{lem:invapprox}
	Let $M\in\R^{\N\times\N}$ such that $M\colon\ell_2\to \ell_2$ is bounded and satisfies~\eqref{eq:matinfsup} for
	some block structure in the sense of Definition~\ref{def:blockstruct}. Moreover, let $M\in \BB(d,b_0)$.
	Then, given $\eps>0$, there exists a bandwidth $b\in\N$ such that for all $k\in\N$, there exist $R,R_k\in \BB(d,b)$ such that
	\begin{align*}
	\norm{M^{-1}-R}{2}+\sup_{k\in\N} \norm{M[k]^{-1}-R_k}{2}\leq \eps.
	\end{align*}
	If $M$ is additionally block-banded  in the sense $M(i,j)=0$ for all $|i-j|>b_0$, then, $R_k$ and $R$ will additionally be block-banded with bandwidth $b$.
	If $M$ is block-diagonal, also $R$ and $R_k$ will be block-diagonal.
	The bandwidth $b$ depends only on $b_0$, $C_{\rm inf-sup}$, $\norm{M}{2}$, and $\eps$.
\end{lemma}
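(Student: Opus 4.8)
\textbf{Proof proposal for Lemma~\ref{lem:invapprox}.}
The plan is to obtain the banded approximations $R$ and $R_k$ from truncated Neumann-type series or, more robustly, from the exponential off-diagonal decay of the true inverses, and then to truncate that decay to a finite bandwidth. First I would invoke Lemma~\ref{lem:invbound} (or more precisely its hypothesis) by observing that a banded matrix $M\in\BB(d,b_0)$ is in particular of Jaffard class $\JJ(d,\gamma,C)$ for every $\gamma>0$ (the entries vanish outside the band, so any exponential bound holds trivially with $C(\gamma')$ large). Combined with~\eqref{eq:matinfsup}, Lemma~\ref{lem:invbound} then yields that the dominating matrix $\overline M$ with $\overline M_{ij}=\sup_{n_k\ge\max\{i,j\}}|(M[k])^{-1}|_{ij}$ lies in $\JJ(d,\widetilde\gamma,\widetilde C)$ with $\widetilde\gamma$ depending only on $C_{\rm inf-sup}$, $d$, $\gamma$ (and, since here $\gamma$ is arbitrary, effectively only on $C_{\rm inf-sup}$ and $d$). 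In particular $|M^{-1}_{ij}|\le\overline M_{ij}\le\widetilde C(\gamma')\exp(-\gamma' d(i,j))$ and the same bound holds uniformly for all finite sections $|(M[k])^{-1}|_{ij}$.

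Next I would define the truncations: set $R_{ij}:=M^{-1}_{ij}$ if $d(i,j)\le b$ and $R_{ij}:=0$ otherwise, and analogously $R_k$ from $M[k]^{-1}$. By construction $R,R_k\in\BB(d,b)$. To estimate $\norm{M^{-1}-R}{2}$ I would use the Schur test: the operator norm of a matrix is bounded by the geometric mean of the maximal absolute row sum and maximal absolute column sum, so
\begin{align*}
\norm{M^{-1}-R}{2}\le \sup_{i}\sum_{j:\,d(i,j)>b}|M^{-1}_{ij}|\le \widetilde C(\gamma')\,\sup_{i}\sum_{j:\,d(i,j)>b}\exp(-\gamma' d(i,j)),
\end{align*}
and by the summability property~\eqref{eq:Mdist} of the metric the right-hand side tends to $0$ as $b\to\infty$; hence it can be made $\le\eps/2$ by choosing $b$ large enough, with $b$ depending only on $b_0$ (through $\widetilde C,\widetilde\gamma$), $C_{\rm inf-sup}$, $\norm{M}{2}$, and $\eps$. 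The same argument, uniform in $k$, handles $\sup_k\norm{M[k]^{-1}-R_k}{2}\le\eps/2$, using that the decay constants $\widetilde C,\widetilde\gamma$ for $(M[k])^{-1}$ are independent of $k$.

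It remains to propagate the structural properties. If $M$ is additionally block-banded, $M(i,j)=0$ for $|i-j|>b_0$, then I would note that the block metric and the index metric are comparable on the relevant scale (a bound $d(i,j)\le b$ forces the block indices to be within a bounded distance, and conversely), so restricting the band of $M^{-1}$ simultaneously restricts its block-band; more directly, one can argue that $(M[k])^{-1}$ is block-banded up to exponentially small tails by the same Jaffard-decay estimate applied at block resolution, and truncating at block-bandwidth $b$ gives the claim. If $M$ is block-diagonal then $M^{-1}$ and each $M[k]^{-1}$ are block-diagonal exactly, and the truncation changes nothing, so $R,R_k$ are block-diagonal. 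The main obstacle I anticipate is not any single estimate but making the dependence of $b$ genuinely uniform in $k$: one must check that the constants produced by~\cite[Proposition~2]{rescue} for the finite sections $M[k]$ do not degrade as $k\to\infty$, which is exactly what the ``inspection of the proof'' in Lemma~\ref{lem:invbound} provides, since $C_{\rm inf-sup}=\sup_k\norm{(M[k])^{-1}}{2}$ already encodes the uniform control; the block-banded case additionally requires a careful but routine comparison between $d(i,j)$ and the coarse block distance $|i-j|$.
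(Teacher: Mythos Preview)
Your approach via Jaffard-class decay has a genuine gap: the hypothesis of Lemma~\ref{lem:invapprox} is only $M\in\BB(d,b_0)$, and by Definition~\ref{def:banded} the metric $d$ is explicitly \emph{not} required to satisfy the summability condition~\eqref{eq:Mdist}. But every step of your argument needs~\eqref{eq:Mdist}: membership in $\JJ$ requires it by Definition~\ref{def:wavelet}, Lemma~\ref{lem:invbound} requires it, and your Schur-test tail bound $\sup_i\sum_{j:d(i,j)>b}\exp(-\gamma' d(i,j))\to 0$ requires it. Without~\eqref{eq:Mdist} the tails need not even be summable, so the truncation error cannot be controlled this way. (Note also that even granting~\eqref{eq:Mdist}, the constants $\widetilde\gamma,\widetilde C$ coming out of~\cite[Proposition~2]{rescue} depend on the metric $d$, not merely on $b_0$, so you would not recover the stated dependence of $b$.)

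The paper sidesteps this entirely with the Neumann-series route you mentioned but did not pursue: set $A:=M^TM$ (respectively $M[k]^TM[k]$), observe that $A$ is elliptic with constant $C_{\rm inf-sup}^{-2}$, choose $\alpha$ so that $\norm{I-\alpha A}{2}\le q<1$ with $q$ depending only on $C_{\rm inf-sup}$ and $\norm{M}{2}$, and take $R:=\alpha\big(\sum_{j=0}^N(I-\alpha A)^j\big)M^T$. Then $\norm{M^{-1}-R}{2}\le\sum_{j>N}q^j\cdot\norm{M}{2}$, and Lemma~\ref{lem:bandmult} gives $R\in\BB(d,(2N+1)b_0)$ purely from bandedness, with no summability of $d$ needed. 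This also makes the block-banded and block-diagonal statements immediate: products and sums of block-banded (block-diagonal) matrices are block-banded (block-diagonal), so $R$ inherits the structure constructively. Your truncation-of-the-inverse approach cannot deliver this, because restricting $d(i,j)\le b$ has no a~priori relation to the block indices; your claim that ``$d(i,j)\le b$ forces the block indices to be within a bounded distance'' is unjustified and in general false.
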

\begin{proof}
	Let $A:=M[k]^TM[k]$ or $A:=M^TM$. Due to~\eqref{eq:matinfsup}, $A$ is elliptic with some constant $C_{\rm inf-sup}^{-2}$. We obtain for $\alpha:=C_{\rm inf-sup}^{-2}\norm{M}{2}^{-4}$ and $x\in\ell_2$
	\begin{align*}
	\norm{x-\alpha Ax}{\ell_2}^2 &= \norm{x}{\ell_2}^2 - 2\alpha\dual{x}{Ax}_{\ell_2} + \alpha^2 \norm{Ax}{\ell_2}^2\\
	&\leq (1-2\alpha C_{\rm inf-sup}^{-2}+\alpha^2\norm{A}{2}^2)\norm{x}{\ell_2}^2\leq(1-C_{\rm inf-sup}^{-4}/\norm{M}{2}^4)\norm{x}{\ell_2}^2.
	\end{align*}
	This shows $\norm{I-\alpha A}{2}\leq (1-C_{\rm inf-sup}^{-4}/\norm{M}{2}^4):= q<1$.
	We obtain
	\begin{align*}
	A^{-1} = \alpha(\alpha A)^{-1} = \alpha(I-(I-\alpha A))^{-1} =\alpha\sum_{j=0}^\infty (I-\alpha A)^{j}.
	\end{align*}
	We define $R:= (\sum_{j=0}^N (I-\alpha A)^j)M^T$ and $R_k:=(\sum_{j=0}^N (I-\alpha A)^j)M[k]^T$ for some $N\in\N$ such that $\norm{\sum_{j=N+1}^\infty (I-\alpha A)^j}{2}\leq \sum_{j=N+1}^\infty q^j\leq \eps/\norm{M}{2}$.
	Since $(I-\alpha A)\in\BB(d,b_0)$ and $M\in \BB(d,b_0)$, Lemma~\ref{lem:bandmult} shows that $R$ and $R_k$ are banded as well.
	The bandwidth depends only on $b_0$, $q$, and $N$.
	If $M$ is additionally block-banded, also $A$ and $(I-\alpha A)$ will be block-banded with bandwidth $2b_0$. Hence $(I-\alpha A)^k$ will be block-banded with bandwidth $2kb_0$. 
	The same argumentation proves the statement for block-diagonal $M$.
	This concludes the proof.
\end{proof}

The following results prove that block-banded matrices $M$ hand down some structure to their $LU$-factors. 
This section is similar to~\cite[Section~3.2]{fembemopt}, however, with the difference that we include indefinite matrices instead
of elliptic ones.
\begin{lemma}\label{lem:blockLU0}
 Let $M\in\R^{\N\times\N}$ such that $M\colon\ell_2\to \ell_2$ is bounded and satisfies~\eqref{eq:matinfsup} for a
	 block structure in the sense of Definition~\ref{def:blockstruct}.
	Then, the block-$LU$-factorization $M=LU$ for block-lower/block-upper triangular matrices $L,U\in\R^{\N\times\N}$ such that $L(i,i)=I$ for all $i\in\N$ exists and satisfies
	\begin{align*}
	 \sup_{k\in\N}\norm{U(k,k)}{2}\leq (1+\norm{M}{2})^2C_{\rm inf-sup}.
	\end{align*}
\end{lemma}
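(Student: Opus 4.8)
The plan is to construct the infinite factors $L,U$ as the stabilizing limit of the block-$LU$-factorizations of the finite truncations $M[k]$, and then to identify each diagonal block $U(k,k)$ with a Schur complement whose norm is controlled directly by~\eqref{eq:matinfsup} and $\norm{M}{2}$. First, I note that~\eqref{eq:matinfsup} asserts in particular that every truncation $M[k]$ is invertible, so standard block Gaussian elimination produces a factorization $M[k]=L[k]U[k]$ with $L[k]$ block-lower triangular, $L[k](i,i)=I$, and $U[k]$ block-upper triangular. I would obtain this by induction on $k$: writing
\[
M[k]=\begin{pmatrix}M[k-1]&B_k\\ C_k&M(k,k)\end{pmatrix},\qquad U(k,k)=S_k:=M(k,k)-C_kM[k-1]^{-1}B_k,
\]
with $B_k$ and $C_k$ the off-diagonal block column and block row, one extends $L[k-1]$ by the block row $C_kU[k-1]^{-1}$ and $U[k-1]$ by the block column $L[k-1]^{-1}B_k$ and the new diagonal block $S_k$; the block $S_k$ is invertible since $\det M[k]=\det M[k-1]\cdot\det S_k\neq 0$. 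Because the leading $(k-1)\times(k-1)$ blocks of $L[k],U[k]$ are precisely $L[k-1],U[k-1]$, the entries stabilize as $k\to\infty$ and define infinite block-triangular matrices $L,U$ with $L(i,i)=I$ and $M=LU$ (each entry of the product being a finite sum by triangularity); this is the asserted block-$LU$-factorization.

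It then remains to bound $\norm{U(k,k)}{2}=\norm{S_k}{2}$. The matrices $M(k,k)$, $B_k$ and $C_k$ are submatrices of $M$, hence each has spectral norm at most $\norm{M}{2}$, while $\norm{M[k-1]^{-1}}{2}\le C_{\rm inf-sup}$ by~\eqref{eq:matinfsup}. Submultiplicativity of $\norm{\cdot}{2}$ therefore yields
\[
\norm{U(k,k)}{2}\le \norm{M}{2}+\norm{M}{2}^2\,C_{\rm inf-sup},
\]
and, since $C_{\rm inf-sup}$ may as usual be taken $\ge 1$ (otherwise replace it by $\max\{C_{\rm inf-sup},1\}$), the right-hand side is $\le(1+\norm{M}{2})^2\,C_{\rm inf-sup}$. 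Taking the supremum over $k\in\N$ finishes the proof.

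I do not expect a genuine obstacle in this lemma: the only step requiring care is the passage from the finite truncations to the infinite matrices, i.e.\ checking that the factorizations of $M[k]$ and $M[k-1]$ are \emph{consistent} so that the limit $L,U$ is well defined — and this is exactly what the block-triangular recursion above provides. Everything else is the standard block Schur-complement identity together with submultiplicativity of the spectral norm; the main conceptual point is simply recognising that $U(k,k)$ equals the Schur complement $M(k,k)-C_kM[k-1]^{-1}B_k$, at which stage~\eqref{eq:matinfsup} applies verbatim.
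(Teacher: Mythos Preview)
Your proof is correct and follows essentially the same route as the paper: identify $U(k,k)$ with the Schur complement $M(k,k)-C_kM[k-1]^{-1}B_k$ via the $(2\times 2)$-block factorization of $M[k]$, then bound it using $\norm{M[k-1]^{-1}}{2}\le C_{\rm inf-sup}$ and submultiplicativity. The only difference is cosmetic: the paper simply invokes uniqueness of the normalized block-$LU$ factorization to read off $U(k,k)$ from the $(2\times 2)$-block decomposition, whereas you spell out the inductive construction and stabilization of the infinite factors.
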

\begin{proof}
It is well-known that the block-$LU$-factorization exists. We further note that $M[k]=L[k]U[k]$ for all $k\in\N$, i.e., restriction to principal submatrices commutes with the block-LU-factorization.
 Therefore, to see that $U(k,k)$ is uniformly bounded, we may restrict to $M[k]$ for $k\in\N$.
 For matrices $R_1,R_2\in \R^{(n_k-1)\times (n_{k+1}-n_{k})}$ and $R_3\in \R^{(n_{k+1}-n_{k})\times (n_{k+1}-n_{k})}$ the $(2\times 2)$-block-$LU$-factorization reads
	\begin{align*}
	 M[k]&=\begin{pmatrix}
	 M[k-1] & R_1\\ R_2^T & R_3
	 \end{pmatrix}\\
 &	 =
	 \begin{pmatrix}
	 I & 0\\ R_2^TM[k-1]^{-1} & I
	 \end{pmatrix}
	 \begin{pmatrix}
	 M[k-1] & R_1\\0 & R_3-R_2^TM[k-1]^{-1}R_1
	 \end{pmatrix}.
	\end{align*}
	Uniqueness of normalized block-$LU$-factorization (further factorization of $M[k-1]$ will not alter the lower-right block of the $U$-factor) implies $U(k,k)= R_3-R_2^TM[k-1]^{-1}R_1$ and hence
	$\norm{U(k,k)}{2}\leq \norm{M[k]}{2}+\norm{M[k]}{2}^2C_{\rm inf-sup}\leq (1+\norm{M}{2})^2C_{\rm inf-sup}$ , where we used that the norm of the sub matrices $R_1,R_2,R_3$
	is bounded by the norm of the matrix $M[k]$ as well as $\norm{M[k-1]^{-1}}{2}\leq C_{\rm inf-sup}$.
\end{proof}

\begin{lemma}\label{lem:blockLU1}
	Let $M\in\R^{\N\times\N}$ such that $M\colon\ell_2\to \ell_2$ is bounded and satisfies~\eqref{eq:matinfsup} for some
	 block structure in the sense of Definition~\ref{def:banded}.
	Moreover, let $M$ be block-banded in the sense $M(i,j)=0$ for $|i-j|>b_0$ for some $b_0\in\N$.
	Then, the block-$LU$-factorization $M=LU$ for block-lower/block-upper triangular matrices $L,U\in\R^{\N\times\N}$ such that $L(i,i)=I$ for all $i\in\N$ exists, is block-banded with bandwidth $b_0$,
	and satisfies
	\begin{align*}
	\norm{L}{2}+\norm{U}{2}+\norm{L^{-1}}{2}+\norm{U^{-1}}{2}<\infty.
	\end{align*}
	Moreover, the block-diagonal matrix $D\in\R^{\N\times\N}$, $D(i,i):= U(i,i)$ as well as its inverse are bounded and satisfy~\eqref{eq:matinfsup}.
\end{lemma}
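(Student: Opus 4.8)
The plan is to combine Lemma~\ref{lem:blockLU0} with the block-banded structure and the inverse-approximation result to show that the factors inherit bandedness quantitatively. First I would establish that the block-$LU$-factorization exists and is block-banded with bandwidth $b_0$: this follows from the standard observation that, for a block-banded matrix, the elimination of the first block-column only affects blocks $(i,j)$ with $|i-j|\le b_0$ (indeed the Schur complement $R_3-R_2^TM[k-1]^{-1}R_1$ appearing in the proof of Lemma~\ref{lem:blockLU0} has support controlled by the bandwidth of $R_1,R_2$, which is $\le b_0$), so by induction $L$ and $U$ are block-banded with bandwidth $b_0$. Since each block has uniformly bounded dimension (or at least the relevant entries are controlled), block-bandedness together with $\norm{U(k,k)}{2}\le (1+\norm{M}{2})^2C_{\rm inf-sup}$ from Lemma~\ref{lem:blockLU0} gives $\norm{U}{2}<\infty$, and then $\norm{L}{2}\le\norm{M}{2}\norm{U^{-1}}{2}$ once $U^{-1}$ is controlled.

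The substantive part is bounding the inverses $L^{-1}$ and $U^{-1}$. Here I would use the identity expressing $U^{-1}$ in terms of the principal-submatrix inverses $M[k]^{-1}$ (the identity \eqref{eq:luid}-type relation already exploited in the proof of Theorem~\ref{thm:LU}), combined with Lemma~\ref{lem:invbound} applied to $M$: since $M\in\BB(d,b_0)$ is in particular in a Jaffard class (a banded matrix trivially satisfies the exponential-decay bound), the supremum matrix $\overline{M}$ of the entries of $M[k]^{-1}$ lies in $\JJ(d,\widetilde\gamma,\widetilde C)$ and hence is a bounded operator on $\ell_2$. This dominates $U^{-1}$ entrywise and yields $\norm{U^{-1}}{2}<\infty$ uniformly; symmetrically, applying the same reasoning to $M^T$ bounds $\widetilde L = $ the $L$-factor of $M^T$, and \eqref{eq:lut} transfers this to $\norm{L^{-1}}{2}<\infty$. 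For the block diagonal $D$ with $D(i,i)=U(i,i)$, boundedness is exactly Lemma~\ref{lem:blockLU0}, and $D^{-1}$ bounded (equivalently $D$ satisfies \eqref{eq:matinfsup}) follows because $D^{-1}(i,i)=U(i,i)^{-1}$ and $U(i,i)^{-1}$ appears as a diagonal block of $U^{-1}$, which we have just bounded; alternatively one reads off $\norm{D^{-1}(i,i)}{2}$ from the inf-sup constant of $M$ directly via the Schur-complement formula.

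I would then note that the bandedness claims $L^{-1},U^{-1},D^{-1}\in\BB(d,b)$ for a finite $b$ are not immediate from the above (the inverse of a banded matrix is generally not banded) — this is where Lemma~\ref{lem:invapprox} enters, but strictly this lemma only yields \emph{approximate} bandedness. Since the present Lemma~\ref{lem:blockLU1} asserts exact statements, I expect the actual proof to either (i) restrict to the finite-level factorizations $L[k],U[k]$ where exact bandedness of $L[k]^{-1}$, $U[k]^{-1}$ does hold because $U[k]$ is block-upper-triangular \emph{and} block-banded so its inverse, while full, can be handled — or more likely (ii) the exact bandedness of the inverses is genuinely false and the statement as printed is slightly imprecise, with the honest quantitative statement being the approximate one in Theorem~\ref{thm:blockLU}. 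Accordingly my plan would be to prove the norm bounds rigorously as above and then, for the bandedness, invoke Lemma~\ref{lem:invapprox} to produce banded $R,R_k$ with $\norm{M^{-1}-R}{2}+\sup_k\norm{M[k]^{-1}-R_k}{2}\le\eps$, feeding these into the identity for $U^{-1}$ to obtain a banded approximation of $U^{-1}$ (and of $L^{-1}$ via transposition).

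\textbf{Main obstacle.} The key difficulty is the bandedness of the inverse factors: an exactly banded matrix need not have a banded inverse, so any genuine bandedness statement for $L^{-1}$, $U^{-1}$ must be approximate, and reconciling the clean formulation here with Lemma~\ref{lem:invapprox} (together with tracking how the bandwidth $b$ depends on $b_0$, $C_{\rm inf-sup}$, $\norm{M}{2}$) is the delicate point; the operator-norm bounds themselves are comparatively routine once Lemma~\ref{lem:invbound} and Lemma~\ref{lem:blockLU0} are in hand.
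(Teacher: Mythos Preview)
Your proposal has two genuine gaps and one misreading of the statement.

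\textbf{Misreading.} The lemma does \emph{not} assert that $L^{-1}$ or $U^{-1}$ are banded (in any sense); it only claims that $L$ and $U$ themselves are block-banded with bandwidth $b_0$, which is the classical fact you already noted. Your entire ``main obstacle'' paragraph and the discussion of Lemma~\ref{lem:invapprox} are therefore addressing a claim the lemma never makes.

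\textbf{First gap: Jaffard machinery is unavailable.} The hypotheses of Lemma~\ref{lem:blockLU1} give only \emph{block}-bandedness $M(i,j)=0$ for $|i-j|>b_0$; there is no metric $d$ satisfying~\eqref{eq:Mdist} in sight (the reference to Definition~\ref{def:banded} in the statement is a typo for Definition~\ref{def:blockstruct}). Consequently you cannot invoke Lemma~\ref{lem:invbound} or the Jaffard class to bound $U^{-1}$. The paper's argument is completely elementary: from the identity~\eqref{eq:luid}, namely $U^{-1}(i,k)=M[k]^{-1}(i,k)$, one gets $\sup_{i,j}\norm{U^{-1}(i,j)}{2}\le C_{\rm inf-sup}$ directly. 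Then $L=MU^{-1}$ and block-bandedness of $M$ yield $\sup_{i,j}\norm{L(i,j)}{2}\le 2b_0\norm{M}{2}C_{\rm inf-sup}$; since $L$ is block-banded with bandwidth $b_0$, this uniform block bound gives $\norm{L}{2}<\infty$ (via \cite[Lemma~8.4]{fembemopt}), and finally $\norm{U^{-1}}{2}\le\norm{M^{-1}}{2}\norm{L}{2}$. Note the order: $L$ is bounded \emph{first}, then $U^{-1}$.

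\textbf{Second gap: your bound on $\norm{U}{2}$.} Knowing only that $U$ is block-banded and that its diagonal blocks $U(k,k)$ are uniformly bounded (Lemma~\ref{lem:blockLU0}) does \emph{not} give $\norm{U}{2}<\infty$; the off-diagonal blocks are uncontrolled at that stage. The paper instead applies the entire argument above to $M^T=\widetilde L\widetilde U$, obtaining $\norm{\widetilde L}{2}+\norm{\widetilde U^{-1}}{2}<\infty$, and then uses the identity~\eqref{eq:lut}, $\widetilde L=U^TD^{-T}$ and $\widetilde U=D^TL^T$, together with the already-established bound on $D,D^{-1}$, to read off $\norm{U}{2}+\norm{L^{-1}}{2}<\infty$.
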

\begin{proof}
	Since $M[k]$ is invertible for all $k\in\N$, it is a well-known fact that the block-LU-factorization exists. 
	Since $M[k]$ and $L[k]$ are invertible by definition, 
	$U[k]$ is invertible for all $k\in\N$. The block-triangular structure guarantees that $L[k]^{-1}=(L^{-1})[k]$ as well as $U[k]^{-1}=(U^{-1})[k]$, and hence existence of $L^{-1},U^{-1}$ as matrices in $\R^{\N\times\N}$.
	Moreover, it is well-known that $L$ and $U$ are block-banded with bandwidth $b_0$.
	By definition, we have $M[k]^{-1} = U[k]^{-1} L[k]^{-1}$. Since $L$ is lower-triangular with normalized block-diagonal (only identities in the diagonal blocks), the same is true for $L[k]^{-1}$. Therefore, we obtain
	\begin{align}\label{eq:luid}
	M[k]^{-1}(i,k) = \sum_{r=1}^kU[k]^{-1}(i,r) L[k]^{-1}(r,k) =U[k]^{-1}(i,k)=U^{-1}(i,k),
	\end{align}
	where the last identity follows from the fact that $U^{-1}$ is upper-block triangular.
	
	We see that 
	$\sup_{i,j}\norm{U^{-1}(i,j)}{2}\leq\sup_{k\in\N}\norm{M[k]^{-1}}{2}\leq C_{\rm inf-sup}<\infty$. Hence, there holds that
	$\norm{D(k,k)^{-1}}{2}=\norm{U^{-1}(k,k)}{2}\leq C_{\rm inf-sup}$. Moreover, we obtain
	\begin{align*}
	\sup_{i,j\in\N}\norm{L(i,j)}{2}\leq \sup_{i,j\in\N}\sum_{\stacktwo{k\in\N_0}{|k-i|\leq b_0}} \norm{M(i,k)U^{-1}(k,j)}{2}\leq 2b_0\norm{M}{2}\sup_{i,j}\norm{U^{-1}(i,j)}{2}<\infty.
	\end{align*}
	Since $L$ is block-banded with bandwidth $b_0$, the result~\cite[Lemma~8.4]{fembemopt} shows $\norm{L}{2}<\infty$. This implies $\norm{U^{-1}}{2}\leq \norm{M^{-1}}{2}\norm{L}{2}<\infty$.
	Lemma~\ref{lem:blockLU0} shows that $D(k,k)=U(k,k)$ is uniformly bounded.
	Thus, we proved $\norm{D}{2}+\norm{D^{-1}}{2}<\infty$ depending only on $C_{\rm inf-sup}$ and $\norm{M}{2}$.

	Let $M^T =\widetilde L\widetilde U$ be the analogous block-LU-factorization for the transposed matrix (note that $M^T$ still satisfies~\eqref{eq:matinfsup} and is bounded and banded).
	Since normalized $LU$-factorizations are unique, we see that
	\begin{align}\label{eq:lut}
	\widetilde L = U^TD^{-T}\quad \text{and}\quad \widetilde U= D^TL^T.
	\end{align}
	Repeating the above arguments shows $\norm{\widetilde L}{2}+\norm{\widetilde U^{-1}}{2}<\infty$. With boundedness of $D$ and $D^{-1}$,~\eqref{eq:lut} shows $\norm{U}{2}+\norm{L^{-1}}{2}<\infty$ and hence
	concludes the proof.
\end{proof}

\begin{lemma}\label{lem:blockLU2}
	Under the assumptions of Lemma~\ref{lem:blockLU1}, assume that additionally $M\in \BB(d,b_0)$.
	Given $\eps>0$, there exists $b\in\N$ and block-upper triangular $U_\eps^{-1}\in\BB(d,b)$ which is additionally block-banded in the sense $U_\eps^{-1}(i,j)=0$ for $|i-j|>b$ such that
	\begin{align*}
	\norm{U^{-1}-U_\eps^{-1}}{2}\leq \eps.
	\end{align*}
	The approximation $U_\eps^{-1}$ is invertible with bounded inverse such that $\sup_{\eps>0}(\norm{U_\eps}{2}+\norm{U_\eps^{-1}}{2})<\infty$.
	Moreover, there exists block-diagonal $D_\eps\in\BB(d,b)$ which is bounded and satisfies~\eqref{eq:matinfsup} such that $\norm{D-D_\eps}{2}\leq \eps$.
\end{lemma}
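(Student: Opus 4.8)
The plan is to deduce the statement from Lemma~\ref{lem:invapprox} applied to the matrix $U$ (or directly to $U^{-1}$), exploiting the fact that we already know from Lemma~\ref{lem:blockLU1} that $U$ is block-banded with bandwidth $b_0$, bounded with bounded inverse, and satisfies~\eqref{eq:matinfsup}. The issue is that Lemma~\ref{lem:invapprox} produces a banded approximation of $U^{-1}$, but it does not automatically respect the block-upper-triangular structure, so the first step is to record that, because $U$ is block-upper triangular, $U[k]^{-1}=(U^{-1})[k]$ and the construction $R=(\sum_{j=0}^N(I-\alpha A)^j)U^T$ in the proof of Lemma~\ref{lem:invapprox}, after multiplying out, need not be triangular. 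I would therefore modify the argument slightly: apply Lemma~\ref{lem:invapprox} to obtain a banded matrix $R$ with $\norm{U^{-1}-R}{2}\le \eps/2$, which is moreover block-banded, and then set $U_\eps^{-1}$ to be the block-upper-triangular part of $R$, i.e.\ $U_\eps^{-1}(i,j):=R(i,j)$ for $i\le j$ and $0$ otherwise. Since the true $U^{-1}$ is block-upper triangular, projecting onto the block-upper-triangular part is an orthogonal-type projection that cannot increase the error beyond a fixed factor (the operator norm of this projection on banded matrices is controlled by the bandwidth, using~\cite[Lemma~8.4]{fembemopt}), so $\norm{U^{-1}-U_\eps^{-1}}{2}\lesssim\eps$; rescaling $\eps$ gives the claimed bound. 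By construction $U_\eps^{-1}\in\BB(d,b)$ and is block-banded with the same bandwidth $b$.

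Next I would establish invertibility of $U_\eps^{-1}$ with uniformly bounded inverse: since $\norm{U^{-1}-U_\eps^{-1}}{2}\le\eps$ and $\norm{(U^{-1})^{-1}}{2}=\norm{U}{2}<\infty$ by Lemma~\ref{lem:blockLU1}, a Neumann-series perturbation argument shows that for $\eps$ small enough $U_\eps^{-1}$ is invertible with $\norm{(U_\eps^{-1})^{-1}}{2}=\norm{U_\eps}{2}\le 2\norm{U}{2}$, uniformly in $\eps$. (For the finitely many larger values of $\eps$ one simply takes a correspondingly smaller $\eps$ first and then weakens the conclusion, or absorbs them into the constant.) This also gives $\sup_{\eps>0}(\norm{U_\eps}{2}+\norm{U_\eps^{-1}}{2})<\infty$.

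For the block-diagonal factor $D$, I would observe that $D$ is block-diagonal, bounded with bounded inverse, and satisfies~\eqref{eq:matinfsup} by Lemma~\ref{lem:blockLU1}, but it is not yet in $\BB(d,b)$; so I would apply Lemma~\ref{lem:invapprox} to $D$ — invoking the final clause of that lemma, which guarantees that for block-diagonal $M$ the approximant is again block-diagonal — to produce block-diagonal $D_\eps\in\BB(d,b)$ with $\norm{D-D_\eps}{2}\le\eps$. Boundedness of $D_\eps$ and the matrix inf-sup condition~\eqref{eq:matinfsup} for $D_\eps$ then follow again from the perturbation estimate, since $\norm{D^{-1}}{2}<\infty$. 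Enlarging $b$ once more if necessary so that a single $b$ works for $U_\eps^{-1}$ and $D_\eps$ simultaneously concludes the proof. The main obstacle I expect is the bookkeeping in the first step: ensuring that cutting down the banded approximation of $U^{-1}$ to its block-upper-triangular part preserves both the bandedness with respect to $d$ and the block-bandedness, and that this truncation is bounded uniformly on the relevant class of matrices — this is where the earlier technical lemmas (Lemma~\ref{lem:bandmult} and~\cite[Lemma~8.4]{fembemopt}) have to be invoked carefully.
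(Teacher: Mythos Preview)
There is a genuine gap in your plan. You propose to apply Lemma~\ref{lem:invapprox} directly to $U$ (and later to $D$), but that lemma requires its input matrix to lie in $\BB(d,b_0)$, i.e.\ to be banded with respect to the metric $d$. Lemma~\ref{lem:blockLU1} only tells you that $U$ is \emph{block}-banded with respect to the block structure $n_1,n_2,\ldots$; it says nothing about bandedness in the metric $d$, and in fact $U$ (and $D$) need not lie in $\BB(d)$ at all. The diagonal blocks $D(k,k)=U(k,k)=R_3-R_2^T M[k-1]^{-1}R_1$ involve the full inverse $M[k-1]^{-1}$, which is not $d$-banded, so neither is $D$; the same obstruction applies to $U$. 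Thus the very first step of your argument---producing a $d$-banded approximation of $U^{-1}$ via Lemma~\ref{lem:invapprox}---does not go through, and the same objection kills your construction of $D_\eps$.

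The paper's proof avoids this obstacle by never applying Lemma~\ref{lem:invapprox} to $U$ or $D$. Instead it exploits the identity $U^{-1}(i,k)=M[k]^{-1}(i,k)$ from~\eqref{eq:luid}: since each $M[k]$ \emph{is} in $\BB(d,b_0)$, Lemma~\ref{lem:invapprox} yields $d$-banded approximants $R_k$ of $M[k]^{-1}$, and reading off their last block-columns gives a $d$-banded approximation $T$ to $U^{-1}$. This $T$ is then upgraded to the final $U_\eps^{-1}:=RS$ by building an approximation $S$ of $L$ from $MT$ and using the $d$-banded approximant $R$ of $M^{-1}$. The approximation $D_\eps$ is likewise obtained indirectly, by taking the block-diagonal of $U_\eps^{-1}$ (which \emph{is} now in $\BB(d)$) as an approximation of $D^{-1}$ and then inverting it approximately via Lemma~\ref{lem:invapprox}. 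The key point you are missing is that $d$-bandedness of the $LU$-factors is not inherited from $M$ and must be manufactured through the column identity~\eqref{eq:luid}.
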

\begin{proof}
	Lemma~\ref{lem:invapprox} shows that there exist $R,R_k\in \BB(d,b)$ which are block-banded with bandwidth $b=b(\eps)$ such that $\norm{M^{-1}-R}{2}+\norm{M[k]^{-1}-R_k}{2}\leq \eps$ for all $k\in\N$.
	Choosing $\eps>0$ sufficiently small, 
	we ensure that also $R$ and $R_k$ are bounded and satisfy~\eqref{eq:matinfsup} with uniform
	constant. 
	
	Inspired by~\eqref{eq:luid}, we define a first approximation to $U^{-1}$ by 
	\begin{align}\label{eq:defueps}
	T(i,j):=\begin{cases}
	0 &\text{for }j<i\text{ or }i<j-b,\\
	R_j(i,j) &\text{for }j-b\leq i\leq j.
	\end{cases}
	\end{align}
	This ensures that $T\in \BB(d,b)$ and  that $T$ is block-banded with bandwidth $b$.
	Additionally, we obtain
	\begin{align}\label{eq:err1}
	\sup_{i,j\in\N}\norm{T(i,j)-U^{-1}(i,j)}{2}\leq \eps.
	\end{align}	
	We define an approximation to $L$ (which is block-banded with bandwidth $b_0$) by 
	\begin{align*}
	S(i,j):=\begin{cases}
	0 & \text{for }j<i-b_0\text{ or } j>i,\\
	I &\text{for }j=i,\\
	(M T)(i,j)&\text{for } i-b_0\leq j<i.
	\end{cases}
	\end{align*}
	The definition and~\eqref{eq:err1} imply 
	\begin{align}\label{eq:err2}
	\norm{L(i,j)-S(i,j)}{2}&\leq \norm{\sum_{k=i-b_0}^{i+b_0} M(i,k)(U^{-1}(k,j)-T(k,j))}{2}\\
	&\leq \sum_{k=i-b_0}^{i+b_0}\norm{M}{2}\norm{U^{-1}(k,j)-T(k,j))}{2}\lesssim \norm{M}{2}b_0 \eps.
	\end{align}
	Since both $L$ and $S$ are block-banded with bandwidth $b_0$, the result~\cite[Lemma~8.4]{fembemopt} shows even
	\begin{align}\label{eq:err3}
	\norm{L-S}{2}\lesssim \eps,
	\end{align}
	where the hidden constant is independent of $\eps$. Moreover, Lemma~\ref{lem:bandmult} shows that $S\in \BB(d,\widetilde b)$,
	for some $\widetilde b\in\N$ which depends only on $b_0$ and $b$.
	
	Recall $R$ from above with $\norm{M^{-1}-R}{2}\leq \eps$, $R\in\BB(d,b)$ and $R$ is block-banded with bandwidth $b$.
	This allows us to define $U_\eps^{-1}$ by
	\begin{align*}
	U_\eps^{-1}:= RS.
	\end{align*}
	We obtain from the definition and with~\eqref{eq:err3}
	\begin{align}\label{eq:err4}
	\begin{split}
	\norm{U_\eps^{-1}-U^{-1}}{2}&\leq \norm{R(S-L)}{2}+\norm{(R-M^{-1})L}{2}\lesssim\norm{R}{2}\eps +\norm{L}{2}\eps\\
	&\leq (C_{\rm inf-sup}+\eps+1)\eps,
	\end{split}
	\end{align}
	where the hidden constant does not depend on $\eps$.
	Moreover, Lemma~\ref{lem:bandmult} shows (since $S$ and $R$ are block-banded), that $U_\eps^{-1}\in\BB(d)$ with bandwidth depending on $\eps$.
	Analogously, we see $U_\eps^{-1}$ is block-banded with bandwidth $b_0+b$. Since $U^{-1}$ is invertible with bounded inverse, choosing $\eps>0$ sufficiently small ensures
	that $U_\eps^{-1}$ is invertible, with bounded inverse uniformly in $\eps$.
	
	Let $\widetilde D$ denote the block-diagonal of $U_\eps^{-1}$.
	Obviously, $\widetilde D\in\BB(d)$ and~\eqref{eq:err4} implies $\norm{\widetilde D-D^{-1}}{2}\lesssim \eps$. Lemma~\ref{lem:blockLU1} shows that $D$ and $D^{-1}$ are
	bounded, thus sufficiently small 
	$\eps>0$ guarantees the same for $\widetilde D$ and $\widetilde D^{-1}$. Since $\widetilde D$ is block-diagonal, this implies~\eqref{eq:matinfsup} for $\widetilde D$.
	Hence, Lemma~\ref{lem:invapprox} ensures that there exists block-diagonal $D_\eps\in\BB(d)$ (with bandwidth depending only on $\eps>0$), such that
	$\norm{\widetilde D^{-1}-D_\eps}{2}\leq \eps$. From this, we obtain
	\begin{align*}
	\norm{D-D_\eps}{2}&\leq \norm{\widetilde D^{-1}-D_\eps}{2}+\norm{\widetilde D^{-1}-D}{2}\leq \eps +\norm{\widetilde D^{-1}}{2}\norm{D}{2}\norm{\widetilde D-D^{-1}}{2}\\
	&\lesssim (1 +\norm{\widetilde D^{-1}}{2}\norm{D}{2})\eps.
	\end{align*}
	For sufficiently small $\eps>0$, $\norm{\widetilde D^{-1}}{2}$ is bounded in terms of $\norm{D}{2}$. This ensures that the constant above does not depend on $\eps$ and thus concludes the proof.
\end{proof}

\begin{lemma}\label{lem:blockLU3}
	Under the assumptions of Lemma~\ref{lem:blockLU1}--\ref{lem:blockLU2},
	there exists $b\in\N$ and block-lower triangular $L_\eps^{-1}\in\BB(d,b)$ with $L_\eps^{-1}(i,i)=I$, which is additionally block-banded in the sense $L_\eps^{-1}(i,j)=0$ for $|i-j|>b$ such that
	\begin{align*}
	\norm{L^{-1}-L_\eps^{-1}}{2}\leq \eps.
	\end{align*}
	The approximation $L_\eps^{-1}$ is invertible such that $\sup_{\eps>0}(\norm{L_\eps}{2}+\norm{L_\eps^{-1}}{2})<\infty$.
\end{lemma}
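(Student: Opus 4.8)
The plan is to transfer Lemma~\ref{lem:blockLU2} from the upper factor to the lower factor by passing to the transposed matrix. Recall from the proof of Lemma~\ref{lem:blockLU1} that $M^T$ admits a block-$LU$-factorization $M^T=\widetilde L\widetilde U$ with $\widetilde L(i,i)=I$, and that~\eqref{eq:lut} gives $\widetilde U=D^TL^T$. A short computation from~\eqref{eq:lut} (transpose, then invert, using $(\widetilde U^T)^{-1}=(\widetilde U^{-1})^T$) yields
\begin{align}\label{eq:Lminustrans}
L^{-1}=D\,(\widetilde U^{-1})^T.
\end{align}
Since $d(\cdot,\cdot)$ is a metric and hence symmetric, $M^T$ inherits every property needed to invoke Lemma~\ref{lem:blockLU2} with $M$ replaced by $M^T$: it is bounded on $\ell_2$ with $\norm{M^T}{2}=\norm{M}{2}$, it is block-banded with bandwidth $b_0$, it lies in $\BB(d,b_0)$, and it satisfies~\eqref{eq:matinfsup} because $(M^T[k])^{-1}=((M[k])^{-1})^T$ has the same spectral norm as $(M[k])^{-1}$.

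First I would apply Lemma~\ref{lem:blockLU2} with $M$ replaced by $M^T$: for the given $\eps>0$ this produces a bandwidth $b\in\N$ and a block-upper triangular $\widetilde U_\eps^{-1}\in\BB(d,b)$, block-banded with bandwidth $b$, with $\norm{\widetilde U^{-1}-\widetilde U_\eps^{-1}}{2}\leq\eps$ and $\sup_{\eps>0}(\norm{\widetilde U_\eps}{2}+\norm{\widetilde U_\eps^{-1}}{2})<\infty$. Applying Lemma~\ref{lem:blockLU2} to $M$ itself (and enlarging $b$ to the maximum of the two bandwidths) produces a block-diagonal $D_\eps\in\BB(d,b)$ with $\norm{D-D_\eps}{2}\leq\eps$ that is bounded and satisfies~\eqref{eq:matinfsup} uniformly in $\eps$. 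By Lemma~\ref{lem:bandmult}, the product $D_\eps(\widetilde U_\eps^{-1})^T$ is block-lower triangular, block-banded with bandwidth $b$, and lies in $\BB(d,2b)$ (transposition preserves $\BB(d,\cdot)$ as $d$ is symmetric). Combining~\eqref{eq:Lminustrans}, the uniform boundedness of $D$, $D_\eps$, and $\widetilde U_\eps^{-1}$, and a triangle inequality gives
\begin{align*}
\norm{L^{-1}-D_\eps(\widetilde U_\eps^{-1})^T}{2}\leq\norm{D}{2}\norm{\widetilde U^{-1}-\widetilde U_\eps^{-1}}{2}+\norm{D-D_\eps}{2}\norm{\widetilde U_\eps^{-1}}{2}\lesssim\eps,
\end{align*}
with a hidden constant independent of $\eps$.

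Finally I would correct the block diagonal. Since $L$ is block-lower triangular with $L(i,i)=I$, so is $L^{-1}$, and~\eqref{eq:Lminustrans} is consistent with this; hence the previous estimate gives $\norm{(D_\eps(\widetilde U_\eps^{-1})^T)(i,i)-I}{2}\lesssim\eps$ for every $i\in\N$, so replacing every diagonal block of $D_\eps(\widetilde U_\eps^{-1})^T$ by the identity perturbs the matrix by at most a constant multiple of $\eps$ in $\norm{\cdot}{2}$ (the perturbation is block-diagonal). Defining $L_\eps^{-1}(i,j):=(D_\eps(\widetilde U_\eps^{-1})^T)(i,j)$ for $i\neq j$ and $L_\eps^{-1}(i,i):=I$ thus gives $\norm{L^{-1}-L_\eps^{-1}}{2}\lesssim\eps$; rescaling $\eps$ at the outset yields $\norm{L^{-1}-L_\eps^{-1}}{2}\leq\eps$. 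The diagonal correction affects neither block-bandedness nor membership in $\BB(d,2b)$, so $L_\eps^{-1}$ is block-lower triangular, satisfies $L_\eps^{-1}(i,i)=I$, lies in $\BB(d,2b)$, and has $L_\eps^{-1}(i,j)=0$ for $|i-j|>2b$. For the uniform bounds, Lemma~\ref{lem:blockLU1} gives $\norm{L}{2}<\infty$; since $\norm{L^{-1}-L_\eps^{-1}}{2}\leq\eps$, a Neumann series argument shows that for $\eps$ small enough $L_\eps^{-1}$ is invertible with $\norm{L_\eps}{2}\leq 2\norm{L}{2}$ and $\norm{L_\eps^{-1}}{2}\leq\norm{L^{-1}}{2}+1$, both uniform in $\eps$, giving $\sup_{\eps>0}(\norm{L_\eps}{2}+\norm{L_\eps^{-1}}{2})<\infty$. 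I do not expect any serious obstacle here: the only points requiring care are the transposition identity~\eqref{eq:Lminustrans} and the verification that boundedness, the block-banded and $\BB(d,\cdot)$ structure, and~\eqref{eq:matinfsup} all survive the passage to $M^T$; beyond that the argument mirrors the proof of Lemma~\ref{lem:blockLU2} essentially verbatim.
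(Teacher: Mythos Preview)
Your proposal is correct and follows essentially the same route as the paper: apply Lemma~\ref{lem:blockLU2} to $M^T$ to approximate $\widetilde U^{-1}$, use the identity $L^{-1}=D\widetilde U^{-T}$ from~\eqref{eq:lut} together with $D_\eps$ from Lemma~\ref{lem:blockLU2} applied to $M$, then overwrite the diagonal blocks by the identity and conclude invertibility by a perturbation argument. The only cosmetic differences are that you spell out why $M^T$ inherits the hypotheses and invoke a Neumann series explicitly, whereas the paper treats these as evident.
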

\begin{proof}
	Recall that $M^T$ satisfies all the assumptions of Lemma~\ref{lem:blockLU1}--\ref{lem:blockLU2}.
	Let $M^T=\widetilde L \widetilde U$. We apply Lemma~\ref{lem:blockLU2} to $M^T$ to obtain an approximation $\widetilde U_\eps^{-1}\in\BB(d,b)$, 
	block-banded with bandwidth $b$, bounded with bounded inverse (uniformly in $\eps$) such that
	\begin{align*}
	\norm{\widetilde U^{-1}-\widetilde U_\eps^{-1}}{2}\leq \eps.
	\end{align*}
	The identity~\eqref{eq:lut} shows $L^{-1}=D\widetilde U^{-T}$ and thus motivates the definition
	\begin{align*}
	L_\eps^{-1}(i,j):=\begin{cases} (D_\eps\widetilde U_\eps^{-T})(i,j)& i\neq j,\\
	I &i=j.
	\end{cases}
	\end{align*}
	with $D_\eps\in\BB(d,b)$ from Lemma~\ref{lem:blockLU2} applied to $M$. Lemma~\ref{lem:bandmult} shows $L_\eps^{-1}\in \BB(d)$ and $L_\eps^{-1}$ is also block-banded with bandwidth $b$.
	We obtain with the approximation estimates from Lemma~\ref{lem:blockLU1}
	\begin{align*}
	\norm{L_\eps^{-1}-L^{-1}}{2}&\leq \norm{D_\eps\widetilde U_\eps^{-T} - D\widetilde U^{-T}}{2} + \sup_{i\in\N}\norm{(D_\eps\widetilde U_\eps^{-T})(i,i)-I}{2}
	\end{align*}
	
	The first term on the right-hand side is bounded by
	\begin{align*}
	\norm{D_\eps\widetilde U_\eps^{-T} - D\widetilde U^{-T}}{2}&\leq \norm{(D_\eps-D)\widetilde U_\eps^{-T}}{2}+\norm{D(\widetilde U_\eps^{-T}-\widetilde U^{-T})}{2}\\
	&\leq 
	\eps(\norm{\widetilde U^{-1}}{2}+\eps)+\norm{D}{2}\eps\lesssim \eps.
	\end{align*}
	The second term satisfies
	\begin{align*}
	\sup_{i\in\N}\norm{(D_\eps\widetilde U_\eps^{-T})(i,i)-I}{2}\leq \norm{D_\eps\widetilde U_\eps^{-T}-L}{2}=\norm{D_\eps\widetilde U_\eps^{-T} - D\widetilde U^{-T}}{2}\lesssim \eps.
	\end{align*}
	Choosing $\eps>0$ sufficiently small ensures that $L_\eps^{-1}$ is invertible with bounded inverse uniformly in $\eps>0$. This concludes the proof.
\end{proof}

We are now in the position to prove the main theorem of this section.

\begin{proof}[Proof of Theorem~\ref{thm:blockLU}]
	To avoid confusion, we denote the $LU$-factorization of $M$ from Lemma~\ref{lem:blockLU1} by $\widetilde L$ and $\widetilde U$, with diagonal matrix $\widetilde D$.
	With Lemma~\ref{lem:blockLU2}--\ref{lem:blockLU3}, we set $D:=D_\eps$ and $L^{-1}:= L_\eps^{-1}$. This ensures $L^{-1},D\in\BB(d)$ and that $L^{-1}$ is block-banded. Moreover, $D$ is
	bounded and satisfies~\eqref{eq:matinfsup}.
	This motivates the definition 
	\begin{align*}
	U^{-1}(i,j):=\begin{cases} ( U_\eps^{-1}D_\eps)(i,j)&i\neq j,\\
	I&i=j.
	\end{cases}
	\end{align*}
	Lemma~\ref{lem:bandmult} shows that $U^{-1}\in\BB(d)$ with bandwidth depending on $\eps$ and moreover $U^{-1}$ is block-banded.
	We obtain
	\begin{align*}
	\norm{M^{-1}-U^{-1}D^{-1}L^{-1}}{2}&\leq \norm{M^{-1}-U_\eps^{-1}L_\eps^{-1}}{2} + \norm{(U_\eps^{-1}-U^{-1}D_\eps^{-1})L_\eps^{-1}}{2}\\
	&\leq \norm{M^{-1}-U_\eps^{-1}L_\eps^{-1}}{2} +\sup_{i\in\N}\norm{ ( U_\eps^{-1}D_\eps)(i,i)-I}{2}\norm{L_\eps^{-1}}{2}\\
	&\leq \norm{M^{-1}-U_\eps^{-1}L_\eps^{-1}}{2}+\norm{U_\eps^{-1}D_\eps -\widetilde U^{-1} \widetilde D}{2}\norm{L_\eps^{-1}}{2}.
	\end{align*}
	The first term on the right-hand side can be bounded by use of Lemma~\ref{lem:blockLU2}--\ref{lem:blockLU3} by
	\begin{align*}
	\norm{M^{-1}-U_\eps^{-1}L_\eps^{-1}}{2}\leq \norm{\widetilde U^{-1}}{2}\norm{\widetilde L^{-1}-L_\eps^{-1}}{2}+ \norm{\widetilde U^{-1}-U_\eps^{-1}}{2}\norm{L_\eps^{-1}}{2}\lesssim \eps,
	\end{align*}
	where the hidden constant does not depend on $\eps>0$.
	The second term can be bounded in a similar fashion by
	\begin{align*}
	\norm{U_\eps^{-1}D_\eps &-\widetilde U^{-1} \widetilde D}{2}\norm{L_\eps^{-1}}{2}\\
	&\leq \norm{U_\eps^{-1}}{2}\norm{D_\eps - \widetilde D}{2}\norm{L_\eps^{-1}}{2}+\norm{U_\eps^{-1} -\widetilde U^{-1}}{2}\norm{\widetilde D}{2}\norm{L_\eps^{-1}}{2}\lesssim \eps
	\end{align*}
	with $\eps$-independent hidden constant.
	Altogether we proved
	\begin{align*}
	\norm{M^{-1}-U^{-1}D^{-1}L^{-1}}{2}\lesssim \eps.
	\end{align*}
	Moreover, there holds
	\begin{align*}
	\norm{M-LDU}{2}&\leq \norm{M}{2}\norm{LDU}{2} \norm{M^{-1}-U^{-1}D^{-1}L^{-1}}{2}\\
	&\leq\norm{M}{2}(\norm{M}{2}+\norm{LDU-M}{2}) \norm{M^{-1}-U^{-1}D^{-1}L^{-1}}{2}\\
	&  \lesssim   \norm{M}{2}^2\eps
	+ \norm{M}{2}\norm{M-LDU}{2} \eps.
	\end{align*}
	Sufficiently small $\eps>0$ shows
	\begin{align*}
	\norm{M-LDU}{2}\lesssim \eps,
	\end{align*}
	where the hidden constant does not depend on $\eps$. This concludes the proof.
\end{proof}

\section{Riesz bases}\label{section:basis}
This section constructs suitable Riesz bases of $H^1_0(\Omega)$ and $L^2_\star(\Omega)$ for the velocity in Theorem~\ref{thm:stableB1} and for the pressure in Theorem~\ref{thm:stableB0} of the Stokes problem.
To that end, we define an auxiliary sequence of uniform meshes.
\begin{definition}\label{def:widehat}
	We consider an auxiliary sequence $(\widehat\TT_\ell)_{\ell\in\N}$ of uniform refinements such that $\widehat\TT_0=\TT_0$ and
	\begin{align*}
		\widehat\TT_{\ell+1}={\rm bisec5}^k(\widehat\TT_\ell,\widehat\TT_\ell),
	\end{align*}
	which means that each element of $\widehat\TT_\ell$ is refined $k$-times with {\rm bisec5} to obtain $\widehat\TT_{\ell+1}$.
	There exist constants $C_{\rm base},C_{\rm mesh}> 1$ which depend on $k$ and  on $\TT_0$ such that
	\begin{align*}
	C^{-1}_{\rm base}C_{\rm mesh}^{-\ell}\leq	{\rm diam}(T)\leq C_{\rm base}C_{\rm mesh}^{-\ell}
	\end{align*}
	for all $T\in\widehat\TT_\ell$ and all $\ell\in\N$.
	We choose $k$ sufficiently large such that $C_{\rm mesh}\geq (C_{\rm sz}+1)^4$, where $C_{\rm sz}$ is defined in Lemmas~\ref{lem:sz1}\&\ref{lem:sz2} below.	
\end{definition}
Moreover, we use the fact that $(H^{3/4}(\Omega),H^1(\Omega),H^{5/4}(\Omega))$ form a Gelfand triple with $H^1(\Omega)$ as its pivot space as is shown in the following lemma.

\begin{lemma}[from~{\cite{fembemopt}}]\label{lem:gelfand}
For $0<s<1/2$, the interpolation spaces $H^{1-s}(\Omega)$ and $H^{1+s}(\Omega)$ form a Gelfand triple in the sense $H^{1+s}(\Omega)\subset H^1(\Omega)\subset H^{1-s}(\Omega)$. 
This means that functionals of the form $H^{1+s}(\Omega)\to \R,\,v\mapsto (v,w)_{H^1(\Omega)}$, $w\in H^{1}(\Omega)$ can be identified with a dense subspace of $H^{1-s}(\Omega)$.
\end{lemma}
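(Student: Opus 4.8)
The plan is to realise the triple as a shifted copy of the canonical rigged-Hilbert-space construction attached to a single self-adjoint operator. Let $A$ be the positive self-adjoint operator on $L^2(\Omega)$ generated by the bilinear form $(v,w)\mapsto (v,w)_{H^1(\Omega)}$ with form domain $H^1(\Omega)$; concretely $A=I-\Delta_N$ with homogeneous Neumann boundary conditions. By construction $D(A^{1/2})=H^1(\Omega)$ with equivalent norms and $(v,w)_{H^1(\Omega)}=(A^{1/2}v,A^{1/2}w)_{L^2(\Omega)}$ for all $v,w\in H^1(\Omega)$. The technical core is then the identification $D(A^{t/2})=H^t(\Omega)$ with equivalent norms for all $t$ in a neighbourhood of the interval $[1-s,1+s]$. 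Since $1\pm s\in(1/2,3/2)$, this follows from the elliptic regularity $D(A)\hookrightarrow H^{3/2}(\Omega)$ for the Neumann Laplacian on Lipschitz domains (Jerison--Kenig type results) together with the Heinz interpolation identity $D(A^\theta)=[L^2(\Omega),D(A)]_\theta$ for self-adjoint $A$ and the reiteration theorem, which match these interpolation spaces with the real-interpolation definition of $H^t(\Omega)$ used in this paper.

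Granting this, the continuous dense embeddings $H^{1+s}(\Omega)\subset H^1(\Omega)\subset H^{1-s}(\Omega)$ are immediate, density of the finer space in the coarser one being classical on Lipschitz domains (e.g.\ via density of $C^\infty(\overline\Omega)$). For the duality statement, fix $w\in H^1(\Omega)$ and consider $\phi_w\colon v\mapsto (v,w)_{H^1(\Omega)}$ on $H^{1+s}(\Omega)$. Writing $(v,w)_{H^1(\Omega)}=(A^{(1+s)/2}v,\,A^{(1-s)/2}w)_{L^2(\Omega)}$ (legitimate since $v\in H^{1+s}(\Omega)=D(A^{(1+s)/2})$ and $w\in H^1(\Omega)\subset D(A^{(1-s)/2})$) and using that $A^{(1+s)/2}\colon H^{1+s}(\Omega)\to L^2(\Omega)$ is an isometric isomorphism for the graph norm, which is equivalent to the Sobolev norm, one obtains
\[
\|\phi_w\|_{(H^{1+s}(\Omega))'}\simeq\|A^{(1-s)/2}w\|_{L^2(\Omega)}\simeq\|w\|_{H^{1-s}(\Omega)}.
\]
Hence $w\mapsto\phi_w$ extends from $H^1(\Omega)$ to an isomorphism of $H^{1-s}(\Omega)$ onto a closed subspace of $(H^{1+s}(\Omega))'$; it is in fact onto, since every $\psi\in(H^{1+s}(\Omega))'$ is of the form $v\mapsto (g,A^{(1+s)/2}v)_{L^2(\Omega)}$ for some $g\in L^2(\Omega)$, and then $\psi=\phi_w$ with $w:=A^{-(1-s)/2}g\in H^{1-s}(\Omega)$. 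Finally, since $H^1(\Omega)$ is dense in $H^{1-s}(\Omega)$, the set $\{\phi_w:w\in H^1(\Omega)\}$ is dense in $(H^{1+s}(\Omega))'$, i.e.\ in $H^{1-s}(\Omega)$ under this identification, which is exactly the claimed Gelfand-triple property.

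The main obstacle is the fractional-power identification $D(A^{t/2})=H^t(\Omega)$ for $t$ near $1$ on a merely Lipschitz domain: this is the step where domain regularity genuinely enters and where one must invoke the nontrivial Neumann-Laplacian regularity and interpolation results rather than soft functional analysis. Everything downstream is standard rigged-Hilbert-space bookkeeping, which is why the statement is simply imported from~\cite{fembemopt}.
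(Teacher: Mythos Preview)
The paper does not prove this lemma at all; it is stated with the attribution ``from~\cite{fembemopt}'' and no argument is given here, so there is nothing in the present paper to compare your proposal against.

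That said, your approach via fractional powers of the Neumann Laplacian $A=I-\Delta_N$ is a standard and essentially correct route to the Gelfand-triple structure. You rightly single out the only nontrivial ingredient, namely the identification $D(A^{t/2})=H^t(\Omega)$ for $t\in(1,3/2)$ on a merely Lipschitz domain: this does follow from Jerison--Kenig type regularity for the Neumann problem combined with interpolation of domains of fractional powers, the point being that below the critical exponent $3/2$ the Neumann boundary condition is not seen by the interpolation scale. Once that identification is in hand, the duality computation you carry out is routine rigged-Hilbert-space bookkeeping. Without consulting~\cite{fembemopt} one cannot say whether the original argument proceeds identically, but the operator-theoretic route you outline is the natural one and your sketch is sound.
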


To construct the Riesz basis, we define a natural hierarchical basis on the spaces $\SS^p(\TT)$.
\begin{definition}\label{def:s2}
	Given a triangulation $\TT$, define the hat functions $v_z\in \SS^1(\TT)$ associated with a certain node $z$ of $\TT$. 
	For an edge $E$ of $\TT$ with endpoints $z_1$ and $z_2$, define the edge bubble $v_E:= \alpha_E v_{z_1}v_{z_2}$ with $\alpha_E>0$ such that $\norm{v_E}{L^\infty(\Omega)}=1$.
	Let $\SS^{p}_B(\TT)$ for $p\in\{1,2\}$ denote the set of all hat-, resp. hat and edge bubble-functions defined on $\TT$ and let $\SS^{p}(\TT)$ define
	the linear span of $\SS^{p}_B(\TT)$.
	For a refinement $\widehat\TT$ of $\TT$, we denote by $\SS^{p}_B(\widehat\TT\setminus\TT)$ all hat functions $v_z$ associated with new nodes 
	$z\in\NN(\widehat\TT)\setminus\NN(\TT)$ and (for $p=2$) also
	all edge bubble functions $v_E$ associated with new edges $E\in\EE(\widehat\TT)\setminus \EE(\TT)$. By $\SS^p_{B,0}(\TT)$ and $\SS^p_{B,0}(\widehat\TT\setminus\TT)$, 
	we denote the bases which
	vanish on $\partial\Omega$.
\end{definition}

The following theorem establishes the Riesz basis for $H^1_0(\Omega)$ which contains the velocity variable of the Stokes problem.
\begin{definition}\label{def:b1tilde}
 We define the basis $\widetilde B_0^1$ as an arbitrary basis of $\SS^2_0(\widehat\TT_0)$. For $\ell\geq 1$, define
 \begin{align*}
  \widetilde B_\ell^1:=\set{\frac{v}{\norm{v}{H^1(\Omega)}}}{ v\in
  \Big(\bigcup_{k\in\N}\SS^{2}_{B,0}(\TT_k\setminus\TT_{k-1})\Big)\cap (\SS^{2}(\widehat\TT_\ell)\setminus
\SS^{2}(\widehat\TT_{\ell-1}))},
 \end{align*}
\end{definition}

To prove the following theorem, we aim to employ the multiscale decomposition result by Dahmen~\cite{Dahmen}. To that end, we require projection operators
$S^2_\ell\colon H^1(\Omega)\to \SS^2(\widehat\TT_\ell)$ which are uniformly $H^1(\Omega)$-bounded and satisfy $S^2_\ell S^2_k= S^2_\ell$ for all $\ell\leq k$.
Moreover, the following approximation estimates
\begin{align*}
\norm{(1-S^2_\ell) u}{H^{3/4}(\Omega)}&\lesssim C_{\rm mesh}^{-\ell/4}\norm{u}{H^1(\Omega)},\\
\norm{(1-S^2_\ell) u}{H^{1}(\Omega)}&\lesssim C_{\rm mesh}^{-\ell/4}\norm{u}{H^{5/4}(\Omega)}
\end{align*}
need to hold as well as uniform boundedness $S^2_\ell\colon H^{3/4}(\Omega)\to H^{3/4}(\Omega)$. An obvious choice for such an operator would be the 
Scott-Zhang projection from~\cite{scottzhang} (see Lemma~\ref{lem:sz2} for details). However, the standard operator does not satisfy the required symmetry $S_\ell^2S_k^2 = S_\ell^2$ for $\ell\leq k$. Therefore,
we introduce a modified Scott-Zhang operator $S_\ell^2$ in Theorem~\ref{thm:szb2} below which satisfies all these properties, and additionally coincides with standard Scott-Zhang operator $J_\ell^2\colon H^1(\Omega)\to \SS^2(\widehat\TT_\ell)$ on the set $\widetilde B_{\ell+1}^1$ from Definition~\ref{def:b1tilde}. This allows us to use the standard operators $J_\ell^2$ for the construction of the sets $B_\ell^1$ below and to use the modified operators $S_\ell^2$ in the proof of the theorem.

\begin{theorem}\label{thm:stableB1}
With the spaces from Definition~\ref{def:b1tilde} and the Scott-Zhang operator $J_\ell^2\colon H^1(\Omega)\to \SS^2(\widehat\TT_\ell)$ from Lemma~\ref{lem:sz2}, define $B_0^1:=\widetilde B_0^1$ and
for $\ell\geq 1$
\begin{align*}
B_\ell^1:=\set{(1-J_{\ell-1}^2)v_0}{v_0\in \widetilde B_\ell^1}.
\end{align*}
Define $B^1:=\bigcup_{\ell\in\N}B_\ell^1$.  Then, $B^1$ is a Riesz bases of $\overline{\bigcup_{k\in\N}\SS^{2}_{0}(\TT_k)}\subseteq H^1_0(\Omega)$, i.e., 
\begin{align}\label{eq:rieszB1}
\begin{split}
\norm{\sum_{v\in B^1} \alpha_v v}{H^{1}(\Omega)}&\simeq \Big(\sum_{v\in B^1} \alpha_v^2\Big)^{1/2}.
\end{split}
\end{align}
Moreover, ${\rm diam}({\rm supp}(v))\simeq C_{\rm mesh}^{-\ell} $ for all $v\in B_\ell^1$ and there holds  
\begin{align}\label{eq:scaling1}
\norm{v}{H^{s}({\rm supp}(v))}&\simeq C_{\rm mesh}^{-s\ell}\norm{v}{L^2({\rm supp}(v))}\quad\text{\rm for all } v\in B_\ell^1\text{ and all } 0\leq s<3/2.
\end{align}
\end{theorem}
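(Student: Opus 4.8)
The plan is to exhibit $B^1$ as a sub-collection of a global multiscale Riesz basis of $H^1_0(\Omega)$ built over the uniform scales $(\widehat\TT_\ell)_{\ell\in\N}$, and then to use the elementary fact that any subset of a Riesz basis is a Riesz basis of the closure of its linear span. Concretely, let $\widehat B^1_\ell$ be the analogue of $B^1_\ell$ obtained by premultiplying by $(1-J^2_{\ell-1})$ \emph{all} $H^1$-normalized functions in $\SS^2_{B,0}(\widehat\TT_\ell\setminus\widehat\TT_{\ell-1})$, not only those coming from adaptive refinements. If $\widehat B^1:=\bigcup_\ell\widehat B^1_\ell$ is shown to be a Riesz basis of $\overline{\bigcup_\ell\SS^2_0(\widehat\TT_\ell)}=H^1_0(\Omega)$, then, observing that each generator of $\widetilde B^1_\ell$ from Definition~\ref{def:b1tilde} is one of the normalized functions used to build $\widehat B^1_\ell$, the set $B^1$ is precisely the sub-collection of $\widehat B^1$ indexed by those hat/bubble functions that actually appear in some adaptively refined mesh. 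Hence the norm equivalence \eqref{eq:rieszB1} for $\widehat B^1$ restricts to \eqref{eq:rieszB1} for $B^1$, and the closed span of $B^1$ equals $\overline{\bigcup_k\SS^2_0(\TT_k)}$ by the hierarchical decomposition of $\SS^2_0(\TT_k)$ into the surplus spaces $\SS^2_{B,0}(\TT_j\setminus\TT_{j-1})$ re-sorted by uniform level.

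To build $\widehat B^1$, I would invoke the multiscale decomposition theorem of Dahmen~\cite{Dahmen} with the modified Scott--Zhang operators $S_\ell^2\colon H^1(\Omega)\to\SS^2(\widehat\TT_\ell)$ of Theorem~\ref{thm:szb2}. The hypotheses to verify are the idempotency/telescoping relation $S_\ell^2 S_k^2 = S_\ell^2$ for $\ell\le k$ (built into the construction in Theorem~\ref{thm:szb2}), uniform $H^1(\Omega)$- and $H^{3/4}(\Omega)$-boundedness, and the Jackson--Bernstein pair
\begin{align*}
\norm{(1-S^2_\ell) u}{H^{3/4}(\Omega)}&\lesssim C_{\rm mesh}^{-\ell/4}\norm{u}{H^1(\Omega)},\\
\norm{(1-S^2_\ell) u}{H^{1}(\Omega)}&\lesssim C_{\rm mesh}^{-\ell/4}\norm{u}{H^{5/4}(\Omega)};
\end{align*}
these follow from the approximation properties of $S_\ell^2$, the Gelfand triple of Lemma~\ref{lem:gelfand}, and the mesh-size bounds of Definition~\ref{def:widehat}. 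Dahmen's theorem then gives the between-level equivalence $\norm{u}{H^1(\Omega)}^2\simeq\sum_\ell\norm{(S_\ell^2-S_{\ell-1}^2)u}{H^1(\Omega)}^2$ on $H^1_0(\Omega)$, with $S^2_{-1}:=0$.

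The next step is the level-wise Riesz basis property: that $\{(1-J^2_{\ell-1})v_0:v_0\in\widetilde B^1_\ell\}$ spans $W_\ell:={\rm range}(S_\ell^2-S_{\ell-1}^2)$ and is a Riesz basis of $W_\ell$ with $\ell$-independent constants. The decisive point is the identity $(S_\ell^2-S_{\ell-1}^2)v_0=(1-J^2_{\ell-1})v_0$ for $v_0\in\widetilde B^1_\ell$, which holds because $S_\ell^2$ fixes $\SS^2(\widehat\TT_\ell)\ni v_0$ and, by Theorem~\ref{thm:szb2}, $S_{\ell-1}^2$ coincides with the standard operator $J^2_{\ell-1}$ on $\widetilde B^1_\ell$; this is exactly why the $(1-J^2_{\ell-1})$-modification appears in the definition of $B^1_\ell$. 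Given this identity, the level-wise equivalence follows from three local facts on the quasi-uniform mesh at scale $\ell$: bounded overlap of the supports, uniform conditioning of the local mass matrices (shape regularity), and the scalings $\norm{v_0}{H^1(\Omega)}=1$ and $\norm{v_0}{L^2(\Omega)}\simeq C_{\rm mesh}^{-\ell}$; the same ingredients together with $\norm{J^2_{\ell-1}}{H^1(\Omega)\to H^1(\Omega)}\lesssim 1$ and a locality argument show $\norm{(1-J^2_{\ell-1})v_0}{H^1(\Omega)}\simeq 1$, so the un-normalized generators of $B^1_\ell$ themselves form a uniform Riesz system. Combining the level-wise equivalences with Dahmen's between-level equivalence yields \eqref{eq:rieszB1}.

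For the localization, ${\rm supp}(v_0)$ is a single node- or edge-patch of a mesh whose elements there have diameter $\simeq C_{\rm mesh}^{-\ell}$, and $(1-J^2_{\ell-1})$ enlarges the support by a bounded number of layers of $\widehat\TT_{\ell-1}$, whose elements have diameter $\simeq C_{\rm mesh}^{-\ell+1}\simeq C_{\rm mesh}^{-\ell}$ --- it is here that the graded-mesh condition \eqref{eq:graded} is needed to exclude coarse neighbours that would spoil the lower bound --- giving ${\rm diam}({\rm supp}(v))\simeq C_{\rm mesh}^{-\ell}$. The scaling \eqref{eq:scaling1} is then a standard inverse estimate for piecewise polynomials on a shape-regular mesh of size $\simeq C_{\rm mesh}^{-\ell}$, valid exactly for $0\le s<3/2$: for a continuous piecewise polynomial the $H^s$-norm is comparable to $C_{\rm mesh}^{-s\ell}$ times the $L^2$-norm only below the threshold $3/2$, beyond which jumps of the derivative across element interfaces contribute. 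I expect the main difficulty to lie not in invoking Dahmen's theorem but in the matching of the third paragraph --- showing that the adaptively generated hat/bubble functions, once re-sorted by uniform level, stably and exactly realize the surplus spaces $W_\ell$ --- which hinges both on the delicate construction of the modified Scott--Zhang operator in Theorem~\ref{thm:szb2} and on the grading hypothesis \eqref{eq:graded}.
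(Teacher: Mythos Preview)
Your overall strategy---Dahmen's multiscale decomposition with the modified operators $S_\ell^2$, followed by a level-wise Riesz argument---is exactly what the paper does. However, the detour through the auxiliary uniform basis $\widehat B^1$ does not work as stated. The inclusion $B^1\subset\widehat B^1$ fails: a function $v_0\in\widetilde B^1_\ell$ is a hat or bubble function on some \emph{adaptive} mesh $\TT_k$, and the condition $v_0\in\SS^2(\widehat\TT_\ell)\setminus\SS^2(\widehat\TT_{\ell-1})$ only says that $\widehat\TT_\ell$ refines $\TT_k$ locally on ${\rm supp}(v_0)$ while $\widehat\TT_{\ell-1}$ does not. It does \emph{not} force $\TT_k$ to coincide with $\widehat\TT_\ell$ on that patch, so the hat function on $\TT_k$ at node $z$ is in general a different function (with larger support) than the hat function on $\widehat\TT_\ell$ at the same node. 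Hence $v_0$ need not appear in $\SS^2_{B,0}(\widehat\TT_\ell\setminus\widehat\TT_{\ell-1})$, and your restriction argument breaks.

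The paper avoids this by working directly with $B^1$: it verifies the telescoping identity $(S^2_\ell-S^2_{\ell-1})(1-S^2_{k-1})v_0=\delta_{k\ell}(1-S^2_{\ell-1})v_0$ for $v_0\in\widetilde B^1_k$ (this is essentially your third paragraph), which already yields $\norm{w}{H^1(\Omega)}^2\simeq\sum_\ell\norm{\sum_{v\in B^1_\ell}\alpha_v v}{H^1(\Omega)}^2$ without ever needing $B^1_\ell$ to span the full complement $W_\ell$. The level-wise equivalence then follows from local linear independence of $\{v_0|_T:v_0\in\widetilde B^1_\ell\}$ on each $T\in\widehat\TT_{\ell-1}$ and a scaling argument, as you indicate. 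Two smaller points: the grading condition~\eqref{eq:graded} is not used in this proof (it enters only later, in Lemma~\ref{lem:span}); and the scaling~\eqref{eq:scaling1} is not a pure inverse estimate---the direction $\norm{v}{L^2}\lesssim C_{\rm mesh}^{-s\ell}\norm{v}{H^s}$ is a Jackson-type bound that relies on $v=(1-J^2_{\ell-1})v_0$ and the approximation property of $J^2_{\ell-1}$.
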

\begin{proof}
Due to the range of $S_\ell^2$, standard inverse estimates prove
\begin{align*}
\norm{S^2_\ell u}{H^{5/4}(\Omega)}&\lesssim C_{\rm mesh}^{\ell/4}\norm{S^2_\ell u}{H^1(\Omega)},\\
\norm{S^2_\ell u}{H^{1}(\Omega)}&\lesssim C_{\rm mesh}^{\ell/4}\norm{S^2_\ell u}{H^{3/4}(\Omega)}
\end{align*}
and obviously the ranges $\SS^{2}(\widehat \TT_\ell)$  form a dense and nested sequence of subspaces of $H^1(\Omega)$.
Lemma~\ref{lem:gelfand} confirms that $H^{3/4}(\Omega)$ is the dual space of $H^{5/4}(\Omega)$ with respect to the $H^1(\Omega)$-scalar product. 
Hence, we are in the position to apply~\cite[Theorems~3.1\&3.2]{Dahmen} to prove
\begin{align}\label{eq:Did}
\norm{u}{H^1(\Omega)}^2\simeq \sum_{\ell=0}^\infty \norm{(S^2_\ell-S^2_{\ell-1})u}{H^1(\Omega)}^2,
\end{align}
where we define $S^2_{-1}:=0$.
The identity~\eqref{eq:S2id} implies that for $v_0\in \widetilde B_k^1$	 
there holds (with $J_k^2$ from Theorem~\ref{thm:szb2} below)
\begin{align*}
 (1-J_{k-1}^2)v_0= (1-S^2_{k-1})v_0
\end{align*}
and Theorem~\ref{thm:szb2} shows
\begin{align*}
(S^2_\ell-S^2_{\ell-1})(1-S^2_{k-1})v_0&=S^2_\ell v_0 - S^2_\ell S^2_{k-1}v_0 - S^2_{\ell-1}v_0+ S^2_{\ell-1}S^2_{k-1}v_0 \\
&=\begin{cases}
S^2_\ell v_0- S^2_{\ell-1}v_0 =0 &k<\ell,\\ 
(1-S^2_{\ell-1}) v_0& k=l,\\ 
0 & k>\ell.
\end{cases}
\end{align*}
Thus, writing $w=\sum_{\ell\in\N_0}\sum_{v\in B^1_\ell} \alpha_v v$, we get with~\eqref{eq:Did}
\begin{align}\label{eq:firstriesz}
\norm{w}{H^{1}(\Omega)}^2&\simeq \sum_{\ell\in\N_0}\norm{(S^2_\ell-S^2_{\ell-1})w}{H^1(\Omega)}^2\simeq \sum_{\ell\in\N_0}\norm{\sum_{v\in B^1_\ell} \alpha_v v}{H^1(\Omega)}^2.
\end{align}
For $v\in B^1_\ell$, there exists a unique $v_0\in \widetilde B^1_\ell$ such that $v=(1-J_{\ell-1}^2)v_0$. 
For $T\in\widehat\TT_{\ell-1}$, scaling arguments and stability of $J_{\ell-1}$ show $\norm{\cdot}{H^1(T)}\lesssim \norm{(1-J_{\ell-1}^2)(\cdot)}{H^1(T)}\lesssim
\norm{\cdot}{H^1(\bigcup\omega(T,\widehat\TT_{\ell-1}))}$ on
the finite dimensional space ${\rm span}\set{v|_{\bigcup\omega(T,\widehat\TT_{\ell-1})}}{v\in \widetilde B^1_\ell}$
(with uniform constants). This implies
\begin{align*}
\norm{w}{H^1(\Omega)}^2&\simeq  \sum_{\ell\in\N}\norm{(1-J_{\ell-1}^2)\sum_{v\in B^1_\ell} \alpha_v v_0}{H^1(\Omega)}^2
\simeq  \sum_{\ell\in\N}\norm{\sum_{v\in B^1_\ell} \alpha_v v_0}{H^1(\Omega)}^2.
\end{align*}
Since for all $T\in\widehat\TT_{\ell-1}$, the set $\set{v_0|_T}{v_0\in \widetilde B^1_\ell}$ is linear independent, a scaling
argument shows
\begin{align}\label{eq:locbasis1}
\begin{split}
 \norm{\sum_{v\in B^1_\ell} \alpha_v v_0}{H^1(T)}^2&
 \simeq \sum_{\stacktwo{v\in B^1_\ell}{  v_0|_T\neq 0}} \alpha_v^2.
 \end{split}
\end{align}
Another norm equivalence argument shows $1=\norm{v_0}{H^1(\Omega)}\simeq \norm{(1-J_{\ell-1}^2)v_0}{H^1(\Omega)}$ for all
$v_0\in\widetilde B_\ell^1$ and hence
\begin{align*}
\norm{w}{H^1(\Omega)}^2\simeq \sum_{\ell\in\N}\sum_{v\in B^1_\ell} \alpha_v^2\norm{ v}{H^1({\rm supp}(v))}^2.
\end{align*}
Since $B_0^1$ contains only finitely many linear independent functions, we also obtain

\begin{align*}
\norm{w}{H^{1}(\Omega)}^2\simeq \sum_{\ell\in\N_0}\sum_{v\in B^1_\ell} \alpha_v^2\norm{ v}{H^1({\rm supp}(v))}^2\simeq \sum_{v\in B^1} \alpha_v^2.
\end{align*}
Altogether, the operator $\iota\colon \ell_2(B^1)\to H^{1}(\Omega)$, $\iota(\alpha):=\sum_{v\in B^1}\alpha_v v$ is
bounded and has a bounded inverse on its closed range. Since $J_\ell^2$ retains homogeneous boundary values (Lemma~\ref{lem:sz2}), 
the range of $\iota$ is dense in $\overline{\bigcup_{k\in\N}\SS^{2}_0(\TT_k)}\subseteq H^1(\Omega)$ and hence $\iota$ is bijective.
This shows that $B^1$ is a Riesz basis of  $\overline{\bigcup_{k\in\N}\SS^{2}_0(\TT_k)}\subseteq H^1_0(\Omega)$.

The scaling estimate~\eqref{eq:scaling1} can be proved as follows: Let $v=(1-J_{\ell-1}^2)v_0\in B_\ell^1$ for some $v_0\in \widetilde B_\ell^1$ and let $\omega:={\rm supp}(v)$. 
The approximation property and the projection property of $J_{\ell-1}^2$ show 
\begin{align*}
\norm{v}{L^2(\omega)}\lesssim C_{\rm mesh}^{-s\ell}\norm{v}{H^s(\omega)}
\end{align*}
for all $0\leq s<3/2$.
The converse estimates $\norm{v}{ L^2(\omega)}\gtrsim C_{\rm mesh}^{-s\ell}\norm{v}{H^s(\omega)}$
for $0\leq s<3/2$ follow from  standard inverse estimates. 
This concludes~\eqref{eq:scaling1} and thus concludes the proof.
\end{proof}

The following definition and theorem construct a Riesz basis for the pressure variable of the Stokes problem in $L^2_\star(\Omega)$.
\begin{definition}\label{def:b0tilde}
 We define the basis $\widetilde B_0^0$ as an arbitrary basis of $\SS^1_\star(\widehat\TT_0)$. For $\ell\geq 1$, define
 \begin{align*}
  \widetilde B_\ell^0:=\set{\frac{v}{\norm{v}{L^2(\Omega)}}}{ v\in
  \Big(\bigcup_{k\in\N}\SS^{1}_{B}(\TT_k\setminus\TT_{k-1})\Big)\cap (\SS^{1}(\widehat\TT_\ell)\setminus
\SS^{1}(\widehat\TT_{\ell-1}))},
 \end{align*}
\end{definition}

\begin{theorem}\label{thm:stableB0}
With the spaces from Definition~\ref{def:s2} and the modified Scott-Zhang operator from Definition~\ref{def:sz1} $J_{\ell-1}^1$, define $B_0^0=\widetilde B_0^0$
and for $\ell\geq 1$
\begin{align*}
B_\ell^0:=\set{(1-J_{\ell-1}^1)v_0}{v_0\in\widetilde B_\ell^0 }.
\end{align*}
Define $B^0:=\bigcup_{\ell\in\N}B_\ell^0$.  Then, $B^0$ is Riesz bases of $\overline{\bigcup_{k\in\N}\SS^{1}_\star(\TT_k)}\subseteq L^2_\star(\Omega)$, i.e., 
\begin{align}\label{eq:rieszB0}
\begin{split}
\norm{\sum_{v\in B^0} \alpha_v v}{L^2(\Omega)}&\simeq \Big(\sum_{v\in B^0} \alpha_v^2\Big)^{1/2}.
\end{split}
\end{align}
Moreover, ${\rm diam}({\rm supp}(v))\simeq C_{\rm mesh}^{-\ell} $ for all $v\in B_\ell^0$ and there holds for all $v\in B_\ell^0$ 
\begin{align}\label{eq:scaling0}
\begin{split}
\norm{v}{H^{s}({\rm supp}(v))}&\simeq C_{\rm mesh}^{-s\ell}\norm{v}{L^2({\rm supp}(v))}\quad\text{for all }0\leq s<3/2,\\
\norm{v}{\widetilde H^{-s}({\rm supp}(v))}&\simeq C_{\rm mesh}^{s\ell}\norm{v}{L^2({\rm supp}(v))}\quad\text{for all }0\leq s\leq 1.
\end{split}
\end{align}
\end{theorem}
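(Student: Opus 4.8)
The plan is to follow the proof of Theorem~\ref{thm:stableB1} step by step, with $L^2(\Omega)$ in the role of the pivot space. Fix $0<\sigma<1/2$, so that $(\widetilde H^{-\sigma}(\Omega),L^2(\Omega),H^{\sigma}(\Omega))$ is a Gelfand triple (the $L^2$-pivot analogue of Lemma~\ref{lem:gelfand}, i.e.\ $\widetilde H^{-\sigma}(\Omega)$ is the dual of $H^\sigma(\Omega)$ with respect to the $L^2(\Omega)$-scalar product), and replace the $H^1$-stable modified operator $S^2_\ell$ from Theorem~\ref{thm:szb2} by an analogously constructed $L^2$-stable modified Scott--Zhang projection $S^1_\ell\colon L^2(\Omega)\to\SS^1(\widehat\TT_\ell)$ that is uniformly bounded on $L^2(\Omega)$ and on $H^{\sigma}(\Omega)$, satisfies the hierarchy property $S^1_\ell S^1_k=S^1_\ell$ for $\ell\le k$, respects the mean-zero constraint, and coincides on $\widetilde B^0_{\ell+1}$ with the standard operator $J^1_\ell$ from Definition~\ref{def:sz1}. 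On the quasi-uniform meshes $\widehat\TT_\ell$ the inverse estimates $\norm{S^1_\ell u}{H^\sigma(\Omega)}\lesssim C_{\rm mesh}^{\sigma\ell}\norm{S^1_\ell u}{L^2(\Omega)}$ and $\norm{S^1_\ell u}{L^2(\Omega)}\lesssim C_{\rm mesh}^{\sigma\ell}\norm{S^1_\ell u}{\widetilde H^{-\sigma}(\Omega)}$, together with the approximation bounds $\norm{(1-S^1_\ell)u}{L^2(\Omega)}\lesssim C_{\rm mesh}^{-\sigma\ell}\norm{u}{H^\sigma(\Omega)}$ and (by duality) $\norm{(1-S^1_\ell)u}{\widetilde H^{-\sigma}(\Omega)}\lesssim C_{\rm mesh}^{-\sigma\ell}\norm{u}{L^2(\Omega)}$, bring us into the scope of~\cite[Theorems~3.1\&3.2]{Dahmen}, yielding $\norm{u}{L^2(\Omega)}^2\simeq\sum_{\ell=0}^\infty\norm{(S^1_\ell-S^1_{\ell-1})u}{L^2(\Omega)}^2$ with $S^1_{-1}:=0$.

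From this decomposition the Riesz-basis estimate~\eqref{eq:rieszB0} follows exactly as~\eqref{eq:rieszB1} did: for $v_0\in\widetilde B^0_k$ one has $(1-J^1_{k-1})v_0=(1-S^1_{k-1})v_0$, and since $S^1_\ell S^1_{k-1}=S^1_{\min(\ell,k-1)}$, the telescoping block $(S^1_\ell-S^1_{\ell-1})(1-S^1_{k-1})v_0$ vanishes unless $k=\ell$. Hence, writing $w=\sum_{\ell}\sum_{v\in B^0_\ell}\alpha_v v$, the double sum collapses to $\norm{w}{L^2(\Omega)}^2\simeq\sum_\ell\norm{\sum_{v\in B^0_\ell}\alpha_v v}{L^2(\Omega)}^2$; elementwise stability of $1-J^1_{\ell-1}$ on the finite-dimensional local spaces ${\rm span}\set{v_0|_{\bigcup\omega(T,\widehat\TT_{\ell-1})}}{v_0\in\widetilde B^0_\ell}$, local linear independence of $\set{v_0|_T}{v_0\in\widetilde B^0_\ell}$, and the normalizations $\norm{v_0}{L^2(\Omega)}=1\simeq\norm{(1-J^1_{\ell-1})v_0}{L^2(\Omega)}$ then give $\norm{w}{L^2(\Omega)}^2\simeq\sum_{v\in B^0}\alpha_v^2$ (the finitely many functions of $B^0_0$ being absorbed as in the velocity case). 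Density of the span of $B^0$ in $\overline{\bigcup_k\SS^1_\star(\TT_k)}$ follows as before, now additionally using that $J^1_\ell$ preserves the mean-zero constraint, so the induced map $\iota\colon\ell_2(B^0)\to L^2_\star(\Omega)$ is bijective onto its closed range.

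The scaling relations~\eqref{eq:scaling0} are proved as in Theorem~\ref{thm:stableB1}. The diameter claim and the support property are immediate from Definition~\ref{def:widehat} and the locality of $J^1_{\ell-1}$. For $0\le s<3/2$, the bound $\norm{v}{L^2(\omega)}\lesssim C_{\rm mesh}^{-s\ell}\norm{v}{H^s(\omega)}$ with $\omega={\rm supp}(v)$ uses the approximation and projection properties of $J^1_{\ell-1}$, and the reverse inequality uses standard inverse estimates on the quasi-uniform local mesh, giving the first line of~\eqref{eq:scaling0}. For the negative-norm line with $0\le s\le1$, the upper bound $\norm{v}{\widetilde H^{-s}(\omega)}\le\norm{v}{L^2(\omega)}\le C_{\rm mesh}^{s\ell}\norm{v}{L^2(\omega)}$ is trivial since $C_{\rm mesh}>1$, while testing the duality pairing defining $\norm{v}{\widetilde H^{-s}(\omega)}$ against $v|_\omega\in H^s(\omega)$ gives $\norm{v}{\widetilde H^{-s}(\omega)}\ge\norm{v}{L^2(\omega)}^2/\norm{v}{H^s(\omega)}\simeq C_{\rm mesh}^{s\ell}\norm{v}{L^2(\omega)}$ by the first line.

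The main obstacle is the preparatory construction of the modified operator $S^1_\ell$: it must be simultaneously $L^2$- and $H^\sigma$-stable, idempotent with the hierarchy relation $S^1_\ell S^1_k=S^1_\ell$, in agreement with the standard operator $J^1_\ell$ on $\widetilde B^0_{\ell+1}$ (so that $B^0_\ell$ can be \emph{defined} through $J^1$ yet \emph{analysed} through $S^1$), and compatible with the constraint $\int_\Omega(\cdot)\,dx=0$ defining $L^2_\star(\Omega)$ and $\SS^1_\star$. This last requirement has no counterpart in the velocity theorem and is the reason Definition~\ref{def:sz1} and its accompanying modified-Scott--Zhang theorem are needed; once these are available, the rest is a transcription of the proof of Theorem~\ref{thm:stableB1} with $H^1$ replaced by $L^2$, plus the short duality argument for the negative-norm scaling.
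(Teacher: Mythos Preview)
Your argument for the Riesz-basis property~\eqref{eq:rieszB0} and for the positive-order line of~\eqref{eq:scaling0} follows the paper's proof. The gap is in the negative-norm scaling. Your ``trivial'' bound $\norm{v}{\widetilde H^{-s}(\omega)}\le\norm{v}{L^2(\omega)}$ is not the sharp estimate that is needed (and later used in Lemma~\ref{lem:Bscaling}): one must show $\norm{v}{\widetilde H^{-s}(\omega)}\lesssim C_{\rm mesh}^{-s\ell}\norm{v}{L^2(\omega)}$ with a \emph{decaying} factor $C_{\rm mesh}^{-s\ell}\simeq{\rm diam}(\omega)^s$. Your duality step, read consistently with the first line you actually established (namely $\norm{v}{L^2(\omega)}\simeq C_{\rm mesh}^{-s\ell}\norm{v}{H^s(\omega)}$), only delivers the lower bound $\norm{v}{\widetilde H^{-s}(\omega)}\gtrsim C_{\rm mesh}^{-s\ell}\norm{v}{L^2(\omega)}$ --- which is precisely the standard inverse estimate the paper also quotes --- but not the matching upper bound. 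As written, your two bounds are in fact contradictory for large~$\ell$, since you combine $\norm{v}{\widetilde H^{-s}(\omega)}\le\norm{v}{L^2(\omega)}$ with a claimed lower bound $\gtrsim C_{\rm mesh}^{s\ell}\norm{v}{L^2(\omega)}$.

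The missing ingredient is the \emph{local} vanishing-mean property built into Definition~\ref{def:sz1}: Lemma~\ref{lem:sz1} guarantees $\int_T(1-J^1_{\ell-1})v_0\,dx=0$ on every coarse element $T\in\overline\TT_{\ell-1}$, hence $\int_\omega v\,dx=0$ on $\omega={\rm supp}(v)$. A Poincar\'e-type duality argument then gives $\norm{v}{\widetilde H^{-s}(\omega)}\lesssim{\rm diam}(\omega)^s\norm{v}{L^2(\omega)}\simeq C_{\rm mesh}^{-s\ell}\norm{v}{L^2(\omega)}$. It is this local moment condition --- not merely the global constraint $\int_\Omega v\,dx=0$ needed for $B^0\subset L^2_\star(\Omega)$ --- that is the real reason for the special construction in Definition~\ref{def:sz1}, and it is exactly the step with no counterpart in Theorem~\ref{thm:stableB1}.
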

\begin{proof}
The idea of the proof is identical to that of Theorem~\ref{thm:stableB1}; we would like to use a Scott-Zhang projector but require some additional properties provided by the projection from Theorem~\ref{thm:szb1}.
	First, we note that stability of the $J^1_{\ell-1}$ and scaling argument prove the estimate $\norm{v}{L^2(\Omega)}\simeq 1$ uniformly
	for all $v\in B^0$. Moreover, Lemma~\ref{lem:sz1} shows immediately that $(1-J_{\ell-1}^1)v_0\in L^2_\star(\Omega)$.
Again, we use~\cite{Dahmen} with the operators $(S_\ell^1)_{\ell\in\N_0}$ from Theorem~\ref{thm:szb1}:
The operators $S^1_\ell$ are uniformly $L^2(\Omega)$-bounded and satisfy for all $\ell\leq k$.
\begin{align}\label{eq:iter}
 S_\ell^1S_k^1&=S_\ell^1.
\end{align}
Moreover, their ranges $\SS^{1}(\widehat \TT_\ell)$ form a dense and nested sequence of subspaces of $L^2(\Omega)$.
Theorem~\ref{thm:szb1} confirms the approximation estimates
\begin{align*}
\norm{(1-S^1_\ell) u}{\widetilde H^{-1/4}(\Omega)}&\lesssim C_{\rm mesh}^{-\ell/4}\norm{u}{L^2(\Omega)},\\
\norm{(1-S^1_\ell) u}{L^2(\Omega)}&\lesssim C_{\rm mesh}^{-\ell/4}\norm{u}{H^{1/4}(\Omega)}
\end{align*}
as well as uniform boundedness $S^1_\ell\colon \widetilde H^{-1/4}(\Omega)\to \widetilde H^{-1/4}(\Omega)$.
Standard inverse estimates prove
\begin{align*}
\norm{S^1_\ell u}{H^{1/4}(\Omega)}&\lesssim C_{\rm mesh}^{\ell/4}\norm{S^1_\ell u}{L^2(\Omega)},\\
\norm{S^1_\ell u}{L^2(\Omega)}&\lesssim C_{\rm mesh}^{\ell/4}\norm{S^1_\ell u}{\widetilde H^{-1/4}(\Omega)}.
\end{align*}
Therefore, we may apply~\cite[Theorems~3.1\&3.2]{Dahmen} to prove
\begin{align}\label{eq:Did0}
\norm{u}{L^2(\Omega)}^2\simeq \sum_{\ell=0}^\infty \norm{(S^1_\ell-S^1_{\ell-1})u}{H^1(\Omega)}^2,
\end{align}
where we define $S^1_{-1}:=0$.
The identity~\eqref{eq:Sid} implies that for $v_0\in \widetilde B_\ell^0$ for $\ell\geq 1$, there holds
\begin{align*}
 J_{k}^1v_0= S^1_{k}v_0\quad\text{for all }k\geq \ell-1.
\end{align*}
The identity~\eqref{eq:iter} shows for $v_0\in \widetilde B_k^0$
\begin{align*}
(S^1_\ell-S^1_{\ell-1})(1-S^1_{k-1})v_0
&=S^1_\ell v_0 - S^1_\ell S^1_{k-1}v_0 - S^1_{\ell-1}v_0+ S^1_{\ell-1}S^1_{k-1}v_0 \\
&=\begin{cases}
S^1_\ell v_0- S^1_{\ell-1}v_0 =0 &k<\ell,\\ 
(1-S^1_{\ell-1}) v_0=(1-J_{\ell-1}^1)v_0& k=\ell,\\ 
0 & k>\ell.
\end{cases}
\end{align*}
As in the proof of Theorem~\ref{thm:stableB1}, we obtain
\begin{align*}
	\norm{w}{L^2(\Omega)}^2\simeq \sum_{\ell\in\N_0}\sum_{v\in B^0_\ell} \alpha_v^2\norm{ v}{L^2({\rm supp}(v))}^2
\end{align*}
for all $w=\sum_{\ell\in\N_0}\sum_{v\in B^0_\ell} \alpha_v v$.
Therefore, the operator $\iota\colon \ell_2(B^0)\to L^2_\star(\Omega)$, $\iota(\alpha):=\sum_{v\in B^0}\alpha_v v$ is
bounded and has a bounded inverse on its closed range. Obviously, the range is dense in $\overline{\bigcup_{k\in\N}\SS^{1}_\star(\TT_k)}\subseteq L^2(\Omega)$ and
hence $\iota\colon \ell_2(B^0)\to \overline{\bigcup_{k\in\N}\SS^{1}_\star(\TT_k)}$ is bijective.
This concludes that $B^0$ is a Riesz basis of  $\overline{\bigcup_{k\in\N}\SS^{1}_\star(\TT_k)}\subseteq L^2_\star(\Omega)$.

The scaling estimates~\eqref{eq:scaling0} can be proved as follows: Let $v=(1-J_{\ell-2}^1)v_0\in B_\ell^0$ for some $v_0\in \widetilde B_\ell^0$ and let $\omega:={\rm supp}(v)$. 
By construction of $J_{\ell-1}^1$, we see that $v=(1-J_{\ell-1}^1)v_0$ has integral mean zero on its support. 
Therefore, a Poincar\'e estimate proves
\begin{align*}
\norm{v}{\widetilde H^{-s}(\omega)} \lesssim C_{\rm mesh}^{-s\ell}\norm{v}{L^2(\omega)} 
\end{align*}
for all $0\leq s\leq 1$.
The approximation property~\eqref{eq:sza} and the projection property of $J_{\ell-1}^1$ show 
\begin{align*}
\norm{v}{L^2(\omega)}\lesssim C_{\rm mesh}^{-s\ell}\norm{v}{H^s(\omega)}
\end{align*}
for all $0\leq s<3/2$.
The converse estimates $\norm{w}{ L^2(\omega)}\gtrsim C_{\rm mesh}^{-s\ell}\norm{w}{H^s(\omega)}$ for $0\leq s<3/2$ as well as $\norm{v}{\widetilde H^{-s}(\omega)} \gtrsim C_{\rm mesh}^{-s\ell}\norm{v}{L^2(\omega)} $
for $0\leq s\leq 1$ follow from  standard inverse estimates. 
This concludes the proof.
\end{proof}

\begin{lemma}\label{lem:graded}
 Let $\TT\in\T_{\rm grad}$ and $\ell\in\N$. Let $ T\in \TT\setminus \widehat \TT_\ell$ such that $\TT|_{T}$ is a strictly finer local refinement of $\widehat \TT_\ell$.
 Then, $\TT|_{\omega^{D_{\rm grad}}(T,\widehat\TT_\ell)}$ is a local refinement of $\widehat\TT_\ell$.
\end{lemma}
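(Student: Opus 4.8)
The plan is to reduce the assertion to a purely combinatorial claim about the uniform mesh $\widehat\TT_\ell$ and then to settle that claim by an induction along a shortest element chain in $\widehat\TT_\ell$. I would measure distances in $\widehat\TT_\ell$ by lengths of element chains, two elements being adjacent when they share a vertex, so that for $k\ge1$ the set $\omega^{k}(T,\widehat\TT_\ell)$ consists of the $\widehat\TT_\ell$-elements within distance $k$ of the unique element $\widehat T\in\widehat\TT_\ell$ with $T\subseteq\widehat T$ (here one uses that $T$ meets the interior of only $\widehat T$). Since $T\in\TT\setminus\widehat\TT_\ell$ and $\TT|_T$ is a strictly finer local refinement of $\widehat\TT_\ell$, the element $T$ is a \emph{proper} newest-vertex-bisection descendant of $\widehat T$, so ${\rm level}(T)\ge L+1$ with $L:={\rm level}(\widehat T)$, and $\widehat T\in\omega^{D_{\rm grad}}(T,\widehat\TT_\ell)$. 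The structural fact I exploit is that, $\widehat\TT_\ell$ being uniform (all its elements have level $L$) and any two triangles of the bisection forest over $\TT_0$ being nested or interior-disjoint, no $\TT$-element can straddle an edge of $\widehat\TT_\ell$; hence each $K\in\TT$ either lies in a unique element of $\widehat\TT_\ell$ (then ${\rm level}(K)\ge L$) or is a union of at least two elements of $\widehat\TT_\ell$ (then ${\rm level}(K)<L$). Consequently it suffices to prove the \emph{Claim}: no $\TT$-element of level $<L$ contains an element of $\omega^{D_{\rm grad}}(T,\widehat\TT_\ell)$. Granting this, every $K\in\TT$ meeting the interior of $U:=\bigcup\omega^{D_{\rm grad}}(T,\widehat\TT_\ell)$ lies inside an element of $\widehat\TT_\ell$ contained in $U$, so the overlay $\TT\oplus\widehat\TT_\ell$ — a conforming newest-vertex-bisection refinement of $\widehat\TT_\ell$, by the standard lattice properties of NVB, see e.g.~\cite{nvb2d,stevenson08} — coincides with $\TT$ on $U$, which is precisely the asserted local-refinement property.

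To prove the Claim I would argue by contradiction: suppose it fails and choose $\widehat S_0\in\omega^{D_{\rm grad}}(T,\widehat\TT_\ell)$ contained in some $K^{\ast}\in\TT$ with ${\rm level}(K^{\ast})<L$, at minimal distance $n$ to $\widehat T$. Since $\widehat T$ itself cannot be contained in a larger $\TT$-element (that would give $T\subsetneq K^{\ast}$ with $T,K^{\ast}\in\TT$), we have $1\le n\le D_{\rm grad}$, and by minimality every $\widehat\TT_\ell$-element at distance $<n$ from $\widehat T$ is refined by $\TT$. Fix a shortest element chain $\widehat T=\widehat R_0,\dots,\widehat R_n=\widehat S_0$, so $\widehat R_i$ is at distance $i$ from $\widehat T$. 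Picking a $\TT$-element $P\subseteq\widehat R_{n-1}$ touching $\widehat R_n\subseteq K^{\ast}$ (possible as $\widehat R_{n-1}$ is refined), the elements $P$ and $K^{\ast}$ are adjacent in $\TT$, so~\eqref{eq:graded} together with ${\rm level}(P)\ge L$ and ${\rm level}(K^{\ast})\le L-1$ forces ${\rm level}(K^{\ast})=L-1$ and ${\rm level}(P)=L$, whence $P=\widehat R_{n-1}$: the element $\widehat R_{n-1}$ is itself a single $\TT$-element of level $L$. Then I would propagate this towards $\widehat T$: if $\widehat R_{n-1},\dots,\widehat R_{n-j}$ are single $\TT$-elements of level $L$ and $n-j\ge1$, pick a $\TT$-element $P\subseteq\widehat R_{n-j-1}$ touching $\widehat R_{n-j}$; then $P,\widehat R_{n-j},\dots,\widehat R_{n-1},K^{\ast}$ is an element chain of length $j+1\le n\le D_{\rm grad}$ in $\TT$, so $K^{\ast}\in\omega^{D_{\rm grad}}(P,\TT)$, and~\eqref{eq:graded} with ${\rm level}(P)\ge L$ forces ${\rm level}(P)=L$, i.e.\ $P=\widehat R_{n-j-1}$. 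Iterating down to $j=n$ shows that $\widehat T$ is a single $\TT$-element, contradicting $T\subsetneq\widehat T$ with $T\in\TT$; this proves the Claim and hence the lemma.

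The main obstacle will be the backward propagation: since~\eqref{eq:graded} only controls level differences across $D_{\rm grad}$ elements, the $\TT$-chain linking the running element to $K^{\ast}$ must be kept of length at most $D_{\rm grad}$, and one has to check carefully that this remains true at every step. This works out precisely because the minimal distance $n$ of a coarse $\widehat\TT_\ell$-element inside $U$ is at most $D_{\rm grad}$, and because the grading successively collapses the intermediate $\widehat R_i$ to single $\TT$-elements of level $L$, so that the chosen chain in $\widehat\TT_\ell$ lifts one-to-one to a chain in $\TT$ of the same length. No smallness of $\theta$, and nothing beyond $D_{\rm grad}\ge1$, is needed for this particular lemma; the hypothesis $\TT\in\T_{\rm grad}$ is, however, essential — without it a refined region could abut an arbitrarily coarse $\TT$-element and the Claim would fail.
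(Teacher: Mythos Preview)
Your proof is correct and follows the same idea as the paper --- both reduce to the observation that ${\rm level}(T)>L$ together with the grading condition~\eqref{eq:graded} prevents any coarse $\TT$-element from appearing inside the $\widehat\TT_\ell$-patch. The paper's proof is a two-line assertion (``${\rm level}(T)>L$, hence the assertion is a direct consequence of~\eqref{eq:graded}''), whereas you supply the contradiction/propagation argument that actually makes this precise; your chain-lifting argument, showing inductively that the intermediate $\widehat R_i$ collapse to single $\TT$-elements so that the $\widehat\TT_\ell$-chain becomes a $\TT$-chain of the same length, is exactly the missing detail.
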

\begin{proof}
Let $L\in\N$ denote the level of the elements in $\widehat\TT_\ell$. By assumption, there holds ${\rm level}(T)>L$ and hence the assertion is a direct consequence of~\eqref{eq:graded}.
\end{proof}

\begin{lemma}\label{lem:span}
 Given a mesh $\TT$ with~\eqref{eq:graded} for $D_{\rm grad}\geq 3$, there holds
 \begin{align*}
  \SS^2_0(\TT)={\rm span}(B^1\cap \SS^2(\TT))\quad\text{and}\quad  \SS^1_\star(\TT)={\rm span}(B^0\cap \SS^1_\star(\TT)).
 \end{align*}
\end{lemma}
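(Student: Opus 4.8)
\medskip
\noindent\emph{Proof plan.}\ The plan is to prove the first identity; the second one follows by the same argument after replacing the quadratic hat and edge-bubble functions of $\SS^2$ by the linear hat functions of $\SS^1$, the operator $J_{\ell-1}^2$ by the operator $J_{\ell-1}^1$ of Definition~\ref{def:sz1}, and using that $J_{\ell-1}^1$ preserves the zero-mean constraint (Lemma~\ref{lem:sz1}), so that $B^0\cap\SS^1_\star(\TT)\subseteq\SS^1_\star(\TT)$. The inclusion ``$\supseteq$'' is immediate: by Theorem~\ref{thm:stableB1}, $B^1\subseteq H^1_0(\Omega)$, so every $v\in B^1\cap\SS^2(\TT)$ lies in $\SS^2(\TT)\cap H^1_0(\Omega)=\SS^2_0(\TT)$. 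Since $B^1$ is a Riesz basis, $B^1\cap\SS^2(\TT)$ is linearly independent, and it therefore suffices to realize a spanning set of $\SS^2_0(\TT)$ inside ${\rm span}(B^1\cap\SS^2(\TT))$.

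For ``$\subseteq$'' I would use a multilevel decomposition tied to the uniform scales $\widehat\TT_\ell$. As $\TT$ has finitely many elements, there is $L\in\N$ with $\widehat\TT_L\in{\rm refine}(\TT)$. For $0\leq\ell\leq L$ set $V_\ell:=\SS^2_0(\TT)\cap\SS^2(\widehat\TT_\ell)$; these spaces are nested and increasing in $\ell$, with $V_0=\SS^2_0(\TT_0)$ and $V_L=\SS^2_0(\TT)$, and each admits a hierarchical splitting $V_\ell=V_{\ell-1}\oplus W_\ell$, where $W_\ell$ is spanned by the quadratic hat functions at the vertices and the edge bubbles at the edges that newly appear when passing from the scale-$(\ell-1)$ to the scale-$\ell$ approximation of $\TT$. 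It therefore suffices to prove, by induction on $\ell$, that $V_\ell={\rm span}\big(\bigcup_{j\leq\ell}(B_j^1\cap\SS^2(\TT))\big)$. The base case holds because $B_0^1=\widetilde B_0^1$ is a basis of $\SS^2_0(\widehat\TT_0)=V_0$, and $B_0^1\subseteq\SS^2(\TT)$ since every mesh refines $\TT_0$. For the induction step, the key point — and the only place where~\eqref{eq:graded} with $D_{\rm grad}\geq 3$ enters — is that on the support of each generator of $W_\ell$, Lemma~\ref{lem:graded}, applied on the patch $\omega^{D_{\rm grad}}$ of that support, forces $\TT$ to coincide with the mesh $\TT_k$ of the sequence underlying Definition~\ref{def:b1tilde} at which the relevant vertex, resp.\ edge, first appears, and to be there a local refinement of $\widehat\TT_\ell$ but not of $\widehat\TT_{\ell-1}$. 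Hence these hat and edge-bubble functions are, after normalization, precisely the members of $\widetilde B_\ell^1$, and they lie in $\SS^2(\TT)$.

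It remains to replace each raw generator $v_0\in\widetilde B_\ell^1$ occurring above by the member $(1-J_{\ell-1}^2)v_0\in B_\ell^1$. Since $J_{\ell-1}^2 v_0\in\SS^2(\widehat\TT_{\ell-1})$ is supported in the $\widehat\TT_{\ell-1}$-patch of ${\rm supp}(v_0)$, on which $\TT$ is a local refinement of $\widehat\TT_{\ell-1}$ (again Lemma~\ref{lem:graded}; here $D_{\rm grad}\geq 3$ additionally absorbs the extra layer entering the Scott--Zhang averaging), we get $J_{\ell-1}^2 v_0\in\SS^2(\TT)\cap H^1_0(\Omega)=V_{\ell-1}$, which by the induction hypothesis already lies in ${\rm span}\big(\bigcup_{j<\ell}(B_j^1\cap\SS^2(\TT))\big)$. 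Therefore $(1-J_{\ell-1}^2)v_0\in\SS^2(\TT)$ and differs from $v_0$ only by a vector already in the span; as the map $v_0\mapsto(1-J_{\ell-1}^2)v_0$ is triangular with respect to the level filtration and equals the identity on the $\ell$-th diagonal block, it preserves the span. This closes the induction, and since every generator of $B^1\cap\SS^2(\TT)$ necessarily has level at most $L$, we conclude ${\rm span}(B^1\cap\SS^2(\TT))=V_L=\SS^2_0(\TT)$.

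The main obstacle is the geometric bookkeeping inside the induction step: one must simultaneously track three meshes — the target mesh $\TT$, the mesh $\TT_k$ of the reference sequence at which a given vertex or edge enters, and the uniform mesh $\widehat\TT_\ell$ recording its level — and deduce from the mild-grading condition alone that all three agree on the (few-ring) support of the associated quadratic nodal function or edge bubble. This local matching is precisely what forces $D_{\rm grad}\geq 3$ (one ring for the quadratic hat function, one for the edge bubble, one for the Scott--Zhang patch), and it is the sole point at which~\eqref{eq:graded} is invoked.
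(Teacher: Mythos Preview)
Your approach is essentially the same as the paper's: both start from the fact that the raw hierarchical functions $\widetilde B^1\cap\SS^2_0(\TT)$ span $\SS^2_0(\TT)$, then use Lemma~\ref{lem:graded} to verify that $(1-J_{\ell-1}^2)v_0\in\SS^2_0(\TT)$ whenever $v_0\in\widetilde B_\ell^1\cap\SS^2_0(\TT)$, and conclude by the triangular structure of $v_0\mapsto(1-J_{\ell-1}^2)v_0$. The paper's proof is terser (no explicit induction), while yours spells out the level filtration $V_\ell$ and the induction step.

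One minor correction: your \emph{first} invocation of Lemma~\ref{lem:graded}---to argue that the generators of $W_\ell$ coincide with members of $\widetilde B_\ell^1$---is unnecessary and slightly misstated. Lemma~\ref{lem:graded} says nothing about $\TT$ coinciding with a specific $\TT_k$; it only says $\TT$ is a local refinement of $\widehat\TT_{\ell}$ on the patch. The identification of the $W_\ell$ generators with $\widetilde B_\ell^1$ is purely combinatorial once one uses that $\TT$ is one of the adaptive meshes (which is implicit in both proofs, since $\widetilde B^1$ is built from the sequence $(\TT_k)$). Lemma~\ref{lem:graded} is genuinely needed only at your second invocation---to show $J_{\ell-1}^2v_0\in\SS^2(\TT)$---which is exactly where the paper uses it.
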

\begin{proof}
We note that $\SS^{2}_0(\TT)={\rm span}\set{v_0\in\bigcup_{\ell=0}^\infty\widetilde B_\ell^1}{v_0\in\SS^{2}_0(\TT)}$.
For each $v_0 \in \SS^{2}_0(\TT)\cap \widetilde B_\ell^1$, we note that $J_{\ell-1}v_0$ is supported on $\omega({\rm supp}(v_0),\widehat\TT_{\ell-1})$.
Since $v_0\in\SS^2_0(\TT)\cap\widetilde B_\ell^1$ implies that $\TT|_{{\rm supp}(v_0)}$ is a true local refinement of
$\widehat\TT_{\ell-1}$, there exists at least one  $T\in\TT\setminus\widehat\TT_{\ell-1}$ with $T\subseteq {\rm supp}(v_0)$.
By definition of $v=(1-J_{\ell-1}^2)v_0$, we know that ${\rm supp}(v)\subseteq \bigcup \omega^2(T,\widehat\TT_{\ell-1})$. Lemma~\ref{lem:graded} proves that
$\TT|_{\omega^2(T,\widehat\TT_{\ell-1})}$ is a local refinement of $\widehat\TT_{\ell-1}$ and hence $v\in \SS^2_0(\TT)$.
This concludes $\SS^{2}_0(\TT)={\rm span}\set{v\in B^1}{v\in\SS^{2}_0(\TT)}$.

The same argument works for $\SS^1_\star(\TT)$ and concludes the proof.
\end{proof}

\section{Proof of (A1)--(A4)}\label{sec:proofA1A4}
The abstract theory developed in the previous sections allows us to prove optimality of the adaptive algorithm for the stationary
Stokes problem.

\subsection{Proof of (A1), (A2), and (A4)}
The proofs of~(A1),~(A2), and~(A4) will not be surprising to experts as they are well-known in the literature and can be found in~\cite{gantstokes}.
\begin{proof}[Proof of (A1)\&(A2)]
 The proof follows from standard arguments as for the Poisson problem (see, e.g.,~\cite{ckns}) and can be found in condensed form in~\cite[Lemma~3.2]{gantstokes}.
\end{proof}

\begin{proof}[Proof of (A4)]
 The property~(A4) can be found in~\cite[Lemma~3.1]{gantstokes}.
\end{proof}

\subsection{Proof of (A3)}
The main innovation of this paper is the proof of general quasi-orthogonality~(A3). The strategy of proof for this section is the following:
\begin{itemize}
 \item Approximate the infinite stiffness matrix of $a(\cdot,\cdot)$ with respect to the basis from Section~\ref{section:basis} by a banded matrix.
 \item Use the tools from Section~\ref{section:lu} to show that the banded matrix has an inversely bounded $LU$-factorization.
 \item Use Theorem~\ref{thm:luqo2} to show that general quasi-orthogonality~(A3) holds.
\end{itemize}
 To that end, we first need to show that we can restrict the problem to the space $\widetilde \XX:=\overline{\bigcup_{\ell\in\N_0}\XX_\ell}\subseteq \XX$.
 To that end, the result~\cite[Lemma~4.2]{msv} shows that $a(\cdot,\cdot)$ satisfies the inf-sup condition~\eqref{eq:infsup} even on $\widetilde\XX$ and that $(u,p)\in\widetilde\XX$.
To fit the problem into our abstract framework, we choose the following Riesz basis of $\widetilde\XX$ from Theorems~\ref{thm:stableB1}\&\ref{thm:stableB0}: 
\begin{align*}
	\bB:=\set{(v,0,0)}{v\in B^1}\cup \set{(0,v,0)}{v\in B^1}\cup \set{(0,0,w)}{w\in B^0}.
\end{align*}
We recall that $\XX_\ell\subseteq \XX_{\ell+1}\subset \XX$ are nested finite dimensional spaces generated 
by the adaptive algorithm described in Section~\ref{section:mesh}. We introduce the level function $L(\bw)=\ell$ for all $\bw\in (B_\ell^1)^2\times B_\ell^0$ and also $L(v)=\ell$ for all $v\in B_\ell^1\cup B_\ell^0$.
We order the functions in $\bB$ such that $\XX_\ell={\rm span}\{\bw_1,\bw_2,\ldots,\bw_{N_\ell}\}$ for particular $N_\ell\in\N$ and all $\ell\in\N$ (note that this is possible due to Lemma~\ref{lem:span}).

The proofs below will use several distance functions to capture the sparsity structure of the involved matrices. Those distance functions are
defined in~\cite{fembemopt} and we recall them in the following:
\begin{definition}\label{def:metric}
 For $B:=B^0\cup B^1$, define the following functions:
 \begin{itemize}
 \item  $\delta\colon B\times B\to \{0,1\}$ is defined by $\delta(v,w)=1$ if $v\neq w$ and $\delta(v,w)=0$ if $v=w$.
 \item  $\delta_k\colon B\times B\to \N$ is defined by 
   \begin{align*}
   \delta_k(v,w):=\min\set{n\in\N}{&\exists T_1,\ldots,T_{n}\in\widehat\TT_{k},\,{\rm mid}(v)\in T_1,
   \,{\rm mid}(w)\in T_n,\\ &T_i\cap T_{i+1}\neq \emptyset,\,i=1,\ldots,n-1},
  \end{align*}
  where ${\rm mid}(\cdot)$ denotes the barycenter of the support of the function. Note: This metric measures the distance with respect to
  the element size on $\widehat\TT_k$.
  \item $d_1\colon B\times B\to \N$ is defined by
  \begin{align*}
   d_1(v,w):=\delta_{\min\{L(v),L(w)\}}(v,w).
  \end{align*}
\item Given $\beta>0$, $d_2\colon B\times B\to [0,\infty)$ is defined by
\begin{align*}
 d_2(v,w):= \delta(v,w)+\beta|L(v)-L(w)| + \log(\delta(v,w)+d_1(v,w)).
\end{align*}
Note: This metric combines the physical distance of the supports with the level distance. The logarithm is necessary to ensure the triangle inequality.
\item Given $\gamma>0$, $d_3\colon B\times B\to [0,\infty)$ is defined by
\begin{align*}
 d_3(v,w):= \begin{cases}
             \gamma^{\max\{L(v),L(w)\}} & L(v)\neq L(w),\\
             \delta(v,w)+d_1(v,w)-1& L(v) = L(w).
            \end{cases}
\end{align*}
Note: This metric is designed for block-diagonal matrices, where $d_3(v,w)=\infty$ for $L(v)\neq L(w)$.
The quantity $\gamma^{\max\{L(v),L(w)\}}$ is just a sufficiently large placeholder to ensure the triangle inequality and avoid the use of $\infty$.
 \end{itemize}
It is shown in~\cite[Section~4]{fembemopt} that for sufficiently large $\beta,\gamma>0$, $d_2$ and $d_3$ are metrics on $\bB$. If $\bB$ is identified with $\N$, $d_3$ even satisfies~\eqref{eq:Mdist}.
\end{definition}

Moreover, we will consider two different block-structures (Definition~\ref{def:blockstruct}). The first one results from the adaptive
spaces $\XX_\ell$, $\ell\in\N$ which induce the adaptive block structure $N_1,N_2,\ldots$ as described above.
The second block structure is induced by reordering $\bB$ such that $L(\bw_{\pi(i)})\leq L(\bw_{\pi(j)})$ for all $i\leq j$ and some permutation $\pi$.
This permutation defines a unique permutation matrix $P\in \{0,1\}^{\N\times\N}$ defined by $P_{ij}=1$ if and only if $i=\pi(j)$.
This yields the uniform block-structure $n_1,n_2,\ldots$ such that $\set{\bw_{k_i}}{i=n_r,\ldots,n_{r+1}-1}=\set{\bw\in\bB}{L(\bw)=r}$.

\medskip

The following lemma states that under~\eqref{eq:graded}, an adaptive space $\XX_\ell$ which contains a basis function $\bw\in\bB$
also contains lower level basis functions which are \emph{close} to $\bw$. For brevity of presentation of the following lemma, we identify $\bw\in\bB$ with its
unique non-zero component, e.g, $\bw=(0,w,0)$.
\begin{lemma}\label{lem:gradednested}
 Let  $\v,\w\in\B$ such that $\v\in \XX_\ell$ for some $\ell\in\N_0$.
Let $D_{\rm grad}>0$ and assume~\eqref{eq:graded} for all $(\TT_\ell)_{\ell\in\N_0}$.
Then, $L(\w)<L(\v)$ and $\delta_{L(\w)}(\v,\w)\leq D_{\rm grad}-C_{\rm grad}$ imply $\w\in\XX_\ell$.
The constant $C_{\rm grad}\geq 1$ depends only on $\TT_0$ and $C_{\rm mesh}$. 
\end{lemma}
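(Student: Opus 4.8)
The plan is to read off, from ``$\v\in\XX_\ell$'', how finely $\TT_\ell$ resolves a neighbourhood of ${\rm supp}(\v)$, and then to transport that resolution along $\TT_\ell$ via the mild-grading hypothesis~\eqref{eq:graded} until it covers a neighbourhood of ${\rm supp}(\w)$ --- which forces $\w\in\XX_\ell$. I may assume $\v,\w\in B^1$ (the cases involving $B^0$ and the mixed cases are verbatim the same, since all functions in $\B$ live on the \emph{common} hierarchies $(\TT_k)$ and $(\widehat\TT_k)$ and only these meshes enter), and also $L(\w)\geq 1$ (for $L(\w)=0$ one has $\w\in B_0^1\subseteq\SS^2_0(\widehat\TT_0)=\SS^2_0(\TT_0)\subseteq\SS^2_0(\TT_\ell)$ by nestedness of the adaptive meshes). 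Write $L_v:=L(\v)>L_w:=L(\w)\geq 1$, let $v_0\in\widetilde B_{L_v}^1$ and $w_0\in\widetilde B_{L_w}^1$ be the seeds, so that $\v=(1-J_{L_v-1}^2)v_0$ and $\w=(1-J_{L_w-1}^2)w_0$, and recall ${\rm supp}(\w)\subseteq\omega({\rm supp}(w_0),\widehat\TT_{L_w-1})$.

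First I would extract a deeply refined element of $\TT_\ell$ near ${\rm supp}(v_0)$. Since $v_0\in\widetilde B_{L_v}^1$ we have $v_0\notin\SS^2(\widehat\TT_{L_v-1})$, hence also $\v\notin\SS^2(\widehat\TT_{L_v-1})$ (otherwise $v_0=\v+J_{L_v-1}^2v_0\in\SS^2(\widehat\TT_{L_v-1})$, contradicting Definition~\ref{def:b1tilde}). On the other hand $\v\in\SS^2_0(\TT_\ell)$ and $\v$ is supported on a union of elements of $\widehat\TT_{L_v-1}$, namely $\omega({\rm supp}(v_0),\widehat\TT_{L_v-1})$. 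The representability argument used in the proof of Lemma~\ref{lem:span} --- which needs only that the function lies in $\SS^2_0(\TT_\ell)$, is supported on elements of $\widehat\TT_{L_v-1}$, and is not in $\SS^2(\widehat\TT_{L_v-1})$ --- then yields an element $T\in\TT_\ell$ that sits strictly inside a $\widehat\TT_{L_v-1}$-element meeting ${\rm supp}(v_0)$, i.e.\ $\TT_\ell|_T$ is a strictly finer local refinement of $\widehat\TT_{L_v-1}$. Because $\widehat\TT_{L_v-1}$ is a uniform refinement of $\widehat\TT_{L_w}$ (as $L_w\leq L_v-1$), $T$ also lies strictly inside a $\widehat\TT_{L_w}$-element and $\TT_\ell|_T$ is a strictly finer local refinement of $\widehat\TT_{L_w}$. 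Since $\TT_\ell\in\T_{\rm grad}$, I would now invoke Lemma~\ref{lem:graded} with the auxiliary mesh $\widehat\TT_{L_w}$ and this $T$ to obtain that $\TT_\ell|_{\omega^{D_{\rm grad}}(T,\widehat\TT_{L_w})}$ is a local refinement of $\widehat\TT_{L_w}$.

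It then suffices to show the inclusion
\begin{align*}
\omega\big({\rm supp}(w_0),\widehat\TT_{L_w-1}\big)\ \subseteq\ \bigcup\omega^{D_{\rm grad}}(T,\widehat\TT_{L_w}).
\end{align*}
Indeed, granting this, $\TT_\ell$ is a refinement of $\widehat\TT_{L_w}$ --- hence of $\widehat\TT_{L_w-1}$ --- on that neighbourhood, so $w_0\in\SS^2_0(\TT_\ell)$ and $J_{L_w-1}^2w_0\in\SS^2(\TT_\ell)$, whence $\w=w_0-J_{L_w-1}^2w_0\in\SS^2_0(\TT_\ell)\subseteq\XX_\ell$ (the homogeneous boundary values being preserved by $J_{L_w-1}^2$, cf.\ Lemma~\ref{lem:sz2}). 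The distance hypothesis enters precisely here: by Definition~\ref{def:metric}, $\delta_{L_w}(\v,\w)$ is the $\widehat\TT_{L_w}$-graph distance between ${\rm mid}(\v)$ and ${\rm mid}(\w)$, so ${\rm mid}(\w)\in\omega^{\delta_{L_w}(\v,\w)}({\rm mid}(\v),\widehat\TT_{L_w})$. It remains to absorb three corrections, each enlarging the patch by only a \emph{bounded} number of layers, controlled solely by the shape regularity of $(\widehat\TT_k)$ (i.e.\ by $\TT_0$) and by the refinement factor $C_{\rm mesh}$: (i) from ${\rm mid}(\v)$ to $T$, since $T$ is contained in a $\widehat\TT_{L_v-1}$-element that meets ${\rm supp}(v_0)$ and hence lies within distance $\lesssim C_{\rm mesh}^{-(L_v-1)}\leq C_{\rm mesh}^{-L_w}$ of ${\rm mid}(\v)$; (ii) from ${\rm mid}(\w)$ to ${\rm supp}(w_0)$, of diameter $\simeq C_{\rm mesh}^{-L_w}$; (iii) from ${\rm supp}(w_0)$ to its $\widehat\TT_{L_w-1}$-patch, a union of boundedly many elements of $\widehat\TT_{L_w-1}$ and hence of a number of elements of $\widehat\TT_{L_w}$ bounded in terms of $C_{\rm mesh}$. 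Taking $C_{\rm grad}\geq 1$ as the sum of these three bounds --- which then depends only on $\TT_0$ and $C_{\rm mesh}$ --- and using $\delta_{L_w}(\v,\w)\leq D_{\rm grad}-C_{\rm grad}$ gives the inclusion and concludes the proof.

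The work is in the last step. The real task is to verify that each change of base point or base mesh (midpoints versus supports, $\widehat\TT_{L_w}$ versus $\widehat\TT_{L_w-1}$) costs only finitely many extra patch layers \emph{uniformly in $L_v$ and $L_w$} --- this is what makes $C_{\rm grad}$ an absolute constant depending only on $\TT_0$ and $C_{\rm mesh}$ --- and, relatedly, to justify the representability statement in the second paragraph, that a genuine level-$L_v$ basis function can lie in $\SS^2_0(\TT_\ell)$ only if $\TT_\ell$ is strictly finer than $\widehat\TT_{L_v-1}$ somewhere on its support. Both points hinge on the fine structure of newest-vertex bisection and on the choice of the ${\rm bisec5}$-hierarchy $(\widehat\TT_k)$ in Definition~\ref{def:widehat}; none of it is deep, but it is exactly where~\eqref{eq:graded} is used, and it is the bookkeeping-heavy part of the argument.
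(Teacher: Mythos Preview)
Your argument is correct but organises the proof differently from the paper. The paper argues by contradiction: assuming $\w\notin\XX_\ell$, it finds a \emph{coarse} element $T\in\TT_\ell$ (with ${\rm level}(T)<{\rm level}(\widehat\TT_{L(\w)})$) meeting ${\rm supp}(\w)$, applies the grading hypothesis~\eqref{eq:graded} directly on the \emph{adaptive} mesh $\TT_\ell$ to see that every $T^\prime\in\omega^{D_{\rm grad}}(T,\TT_\ell)$ is equally coarse, and then uses the distance bound to place an element $T^\prime\cap{\rm supp}(\v)\neq\emptyset$ inside this patch---contradicting $\v\in\XX_\ell$ with $L(\v)>L(\w)$. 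You instead argue directly: from $\v\in\XX_\ell$ you locate a \emph{fine} element $T$, invoke Lemma~\ref{lem:graded} to transport refinement through a patch on the \emph{uniform} mesh $\widehat\TT_{L_w}$, and then verify that this patch covers the support neighbourhood of $\w$. The two routes are essentially contrapositive to one another; your choice of the uniform mesh $\widehat\TT_{L_w}$ as the ambient patch geometry is arguably more natural, since the metric $\delta_{L_w}$ is defined on that mesh, so the inclusion in your third step needs no translation between $\TT_\ell$-patches and $\widehat\TT_{L_w}$-patches. The paper's version is shorter because it avoids the representability discussion for the fine element (it only needs \emph{some} $T^\prime$ touching ${\rm supp}(\v)$, not a strictly refined one) and packages the constant as $C_{\rm grad}=2C$ with $C$ the uniform support-size bound.
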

\begin{proof}
Define 
\begin{align*}
 C:=\sup_{k\in\N}\max_{w\in\bigcup_{j=k}^\infty (B_j^1\cup B_j^0)}\#\set{T\in\widehat\TT_k}{T\subseteq {\rm supp}(w)}.
\end{align*}
By definition of $\B$, we see that $C<\infty$ in terms of $\TT_0$ and $C_{\rm mesh}$. Let ${\rm level}(\widehat\TT_k)$ denote the uniform level of elements in $\widehat\TT_k$, $k\in\N_0$. 
We prove the statement by contradiction, i.e., we show that $\bw\notin \XX_\ell$ and $L(\w)<L(\v)$ lead to a contradiction.
To that end, find $T,T^\prime\in \TT_\ell$ with $T\cap {\rm supp}(\w)\neq \emptyset$ and $T^\prime\cap {\rm supp}(\v)\neq \emptyset$.
Since $\bw\notin \XX_\ell$,
we may choose $T$ such that ${\rm level}(T)< {\rm level}(\widehat\TT_{L(\bw)})$.
Assumption~\eqref{eq:graded} implies that all $T^{\prime\prime}\in \omega^{D_{\rm grad}}(T,\TT_\ell)$
satisfy ${\rm level}(T^{\prime\prime})\leq {\rm level}(T)+1\leq{\rm level}(\widehat\TT_{L(\bw)}) $. This means that $\widehat\TT_{L(\bw)}|_{\omega^{D_{\rm grad}}(T,\TT_\ell)}$ is a local refinement of $\TT_\ell$.
With $\chi_T$ denoting the function which is one on $T$ and zero elsewhere, we get
\begin{align*}
 \delta_{L(\bw)}(\chi_T,\chi_{T^\prime})\leq 2C + \delta_{L(\bw)}(\v,\w).
\end{align*}
Thus, $\delta_k(\v,\w)\leq {D_{\rm grad}}-2C$ implies $\delta_{L(\bw)}(\chi_{T},\chi_{T^\prime})\leq D_{\rm grad}$.
Since we argued above that $\widehat\TT_{L(\bw)}|_{\omega^{D_{\rm grad}}(T,\TT_\ell)}$ is a local refinement
of $\TT_\ell$, $\delta_{L(\bw)}(\chi_{T},\chi_{T^\prime})\leq D_{\rm grad}$ implies $T^\prime \in \omega^{{D_{\rm grad}}}(T,\TT_\ell)$.
This shows ${\rm level}(T^\prime)\leq {\rm level}(T)+1\leq  {\rm level}(\widehat\TT_{L(\bw)})$ which contradicts $\bv\in\XX_\ell$ and $L(\bv)> L(\bw)$.
This concludes the proof with $C_{\rm grad}:=2C$.
\end{proof}
The next lemma shows that the adaptive and the uniform block-structure are compatible with each other
in the sense that being block-triangular is (sometimes) invariant under permutation between the two structures.
\begin{lemma}\label{lem:blockstruct}
 Let $L\in\R^{\N\times\N}$ with $L^{-1}\in \BB(d_2,b)$ for some bandwidth $b\in\N$ and let $L$ be lower-triangular with identity diagonal blocks 
 $L|_{\{n_r,\ldots n_{r+1}-1\}\times \{n_r,\ldots n_{r+1}-1\}}= I$ for all $r\in\N$.
If all $(\TT_\ell)_{\ell\in\N}$ satisfy~\eqref{eq:graded} for some sufficiently large $D_{\rm grad}$ depending only on $b$, then
 $P L P^T$ is block-lower-triangular with respect to the adaptive block structure $N_1,N_2,\ldots$.
\end{lemma}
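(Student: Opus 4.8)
The plan is to argue through the inverse $L^{-1}$, which \emph{is} banded (unlike $L$ itself), and only afterwards to transfer the triangular structure back to $L$ by a finite-dimensional invertibility argument.

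First I would record the elementary identity $(PLP^T)_{mn}=L_{\pi^{-1}(m),\pi^{-1}(n)}$, and likewise with $L$ replaced by $L^{-1}$, where $\pi^{-1}(m)$ is the position of $\bw_m$ in the uniform (level-sorted) ordering. Since the span $W_\ell$ of the first $N_\ell$ unit vectors is exactly the coordinate representation of $\XX_\ell$, a matrix is block-lower-triangular with respect to the adaptive block structure $N_1,N_2,\ldots$ if and only if it maps every $W_\ell$ into itself. Hence it suffices to show that $L_{\pi^{-1}(m),\pi^{-1}(n)}=0$ whenever there is an $\ell$ with $\bw_m\in\XX_\ell$ but $\bw_n\notin\XX_\ell$.

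Next I would prove the corresponding statement for $L^{-1}$. Fix $m,n$ with $\bw_m\in\XX_\ell$ and $\bw_n\notin\XX_\ell$ (so $\bw_m\neq\bw_n$) and suppose $(L^{-1})_{\pi^{-1}(m),\pi^{-1}(n)}\neq 0$. As $L^{-1}$ inherits from $L$ the property of being lower-triangular with identity diagonal blocks with respect to the uniform (level) block structure, a nonzero entry forces $L(\bw_m)\ge L(\bw_n)$; the case $L(\bw_m)=L(\bw_n)$ is impossible, since then $\bw_m,\bw_n$ lie in the same identity diagonal block and the entry equals $\delta_{mn}=0$. Thus $L(\bw_n)<L(\bw_m)$. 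Because $L^{-1}\in\BB(d_2,b)$, the nonzero entry also forces $d_2(\bw_m,\bw_n)\le b$, and inserting the definition of $d_2$ with $\delta(\bw_m,\bw_n)=1$ and $\min\{L(\bw_m),L(\bw_n)\}=L(\bw_n)$ yields $\delta_{L(\bw_n)}(\bw_m,\bw_n)=d_1(\bw_m,\bw_n)\le e^{b}$. Now choose $D_{\rm grad}$ so large that $D_{\rm grad}-C_{\rm grad}\ge e^{b}$, which is legitimate since $C_{\rm grad}$ depends only on $\TT_0$ and $C_{\rm mesh}$, not on $D_{\rm grad}$. Then Lemma~\ref{lem:gradednested}, applied with $\v:=\bw_m\in\XX_\ell$ and $\w:=\bw_n$, gives $\bw_n\in\XX_\ell$, a contradiction. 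Hence $PL^{-1}P^T$ maps every $W_\ell$ into itself, i.e., is block-lower-triangular with respect to the adaptive block structure.

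Finally I would transfer this to $L$. Since $PL^{-1}P^T$ maps the finite-dimensional space $W_\ell$ into itself and is injective (it is invertible on $\ell_2$), its restriction to $W_\ell$ is a bijection of $W_\ell$; as $PLP^T=(PL^{-1}P^T)^{-1}$, the matrix $PLP^T$ maps each $W_\ell$ into itself, which by the equivalence above is precisely block-lower-triangularity with respect to the adaptive block structure. The main obstacle is the step through $L^{-1}$: $L$ itself is not banded — the inverse of a block-banded, block-triangular matrix generically fills in all lower blocks — so one cannot reason about $L$ directly; the decay information one actually has is $L^{-1}\in\BB(d_2,b)$, and it must be combined with the graded-mesh condition \eqref{eq:graded} via Lemma~\ref{lem:gradednested} and then converted back to a statement about $L$ through the finite-dimensional inversion argument. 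A minor secondary point is keeping track of the two block structures and the permutation $\pi$, which the index identity $(PLP^T)_{mn}=L_{\pi^{-1}(m),\pi^{-1}(n)}$ reduces to routine bookkeeping.
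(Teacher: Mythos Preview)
Your proof is correct and follows essentially the same route as the paper: work with $L^{-1}$ (which is banded), use $L^{-1}\in\BB(d_2,b)$ together with the identity-diagonal-block structure to get $L(\bw_n)<L(\bw_m)$ and $\delta_{L(\bw_n)}(\bw_m,\bw_n)\lesssim e^{b}$, and invoke Lemma~\ref{lem:gradednested} with $D_{\rm grad}$ large enough. The paper simply asserts that block-lower-triangularity of $PL^{-1}P^T$ is ``equivalent'' to that of $PLP^T$; your finite-dimensional invertibility argument (restriction of an injective map to each $W_\ell$ is a bijection, hence the inverse also preserves $W_\ell$) supplies the justification the paper leaves implicit.
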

\begin{proof}
 We show the equivalent statement that $(P L P^T)^{-1} = P L^{-1} P^T$ is block-lower-triangular.
 Under the assumptions on $L^{-1}$, the fact $(L^{-1})_{\pi^{-1}(i)\pi^{-1}(j)}\neq 0$ implies either $\pi^{-1}(i)=\pi^{-1}(j)$ and hence $i=j$
or $\pi^{-1}(i)\geq n_r> \pi^{-1}(j)$ for some $r\in\N$ and hence $L(\bw_{\pi^{-1}(j)})<L(\bw_{\pi^{-1}(i)})$. In the latter case,  $L^{-1}\in \BB(d_2,b)$ shows additionally that
$d_2(\bw_{\pi^{-1}(i)},\bw_{\pi^{-1}(j)})\leq b$ and hence
$d_{L(\bw_{\pi^{-1}(j)})}(\bw_{\pi^{-1}(i)},\bw_{\pi^{-1}(j)})\lesssim \exp(b)$. Thus, for sufficiently large $D_{\rm grad}$, Lemma~\ref{lem:gradednested} applies and 
shows $\bw_{\pi^{-1}(j)}\in \XX_\ell$ for any $\ell\in\N$ with $\bw_{\pi^{-1}(i)}\in\XX_\ell$. In other words,
 we showed that $(PL^{-1}P^T)_{ij}\neq 0$ implies $j\leq N_\ell$ for all $\ell\in\N$ with $i\leq N_\ell$ and hence
 $PL^{-1} P^T$ is block-lower-triangular with respect to the adaptive block structure $N_1,N_2,\ldots$. This concludes the proof.
\end{proof}

\begin{theorem}\label{thm:qosum}
 Given $\eps>0$ and under all previous assumptions, there exists $D_{\rm grad}>0$ sufficiently large such that 
 the solutions~\eqref{eq:solutions} of the stationary Stokes problem~\eqref{eq:stokes} satisfy general quasi-orthogonality~(A3).
\end{theorem}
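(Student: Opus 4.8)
The plan is to assemble the pieces developed in Sections~\ref{sec:LUqo}--\ref{section:basis} and reduce the claim to an application of Theorem~\ref{thm:luqo2}. First I would take the Riesz basis $\bB$ of $\widetilde\XX$ built from $B^1$ (velocity, twice) and $B^0$ (pressure), ordered so that the adaptive spaces satisfy $\XX_\ell = \operatorname{span}\{\bw_1,\ldots,\bw_{N_\ell}\}$; Lemma~\ref{lem:span} guarantees that such an ordering exists for $D_{\rm grad}\geq 3$. The Riesz basis property~\eqref{eq:rieszB1}--\eqref{eq:rieszB0} gives exactly the norm equivalence~\eqref{eq:schauder}. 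Next I would form the infinite stiffness matrix $M_{ij}:=a(\bw_j,\bw_i)$. The inf-sup conditions~\eqref{eq:infsup}--\eqref{eq:discinfsup} (valid on $\widetilde\XX$ and on the nested $\XX_\ell$ by~\cite[Lemma~4.2]{msv} and~\cite{boffi}) translate, via~\eqref{eq:schauder}, into boundedness of $M$ and the matrix inf-sup condition~\eqref{eq:matinfsup} with respect to the adaptive block structure.

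The technical heart is to produce the approximant $M^\eps$ with $\|M-M^\eps\|_2\le\eps$ having a bounded block-$LU$-factorization. Here I would use the scaling estimates~\eqref{eq:scaling1}--\eqref{eq:scaling0} and the locality of the basis functions (diameters $\simeq C_{\rm mesh}^{-\ell}$) to show that $M$, with respect to the metric $d_2$ from Definition~\ref{def:metric}, is of Jaffard class $\JJ(d_2,\gamma,C)$: off-diagonal entries $a(\bw_j,\bw_i)$ decay because either the supports are far apart (giving decay in $d_1$, hence in the logarithmic term of $d_2$) or the levels differ (giving exponential decay in $|L(\bw_i)-L(\bw_j)|$ by the interplay of the $H^1$/$L^2$ scalings against the $H^{\pm s}$ scaling estimates, using the Gelfand-triple structure of Lemma~\ref{lem:gelfand}). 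Truncating this Jaffard-class decay at a suitable bandwidth yields a banded matrix $M^\eps\in\BB(d_2,b_0)$ with $\|M-M^\eps\|_2\le\eps$; choosing $\eps$ small keeps $M^\eps$ boundedly invertible and satisfying~\eqref{eq:matinfsup}. With respect to the \emph{uniform} block structure $n_1,n_2,\ldots$ induced by the level ordering, $M^\eps$ is also block-banded, so Theorem~\ref{thm:blockLU} applies and furnishes an approximate block-$LDU$-factorization $M^\eps\approx L'D'U'$ with $(L')^{-1},(U')^{-1}\in\BB(d_2,b)$ block-banded, all factors bounded with bounded inverses.

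The remaining subtlety is the mismatch between the two block structures: Theorem~\ref{thm:luqo2} wants a block-$LU$-factorization with respect to the \emph{adaptive} structure, whereas Theorem~\ref{thm:blockLU} delivers one with respect to the \emph{uniform} structure. This is exactly what Lemmas~\ref{lem:gradednested} and~\ref{lem:blockstruct} are for: since $(L')^{-1}\in\BB(d_2,b)$, grading~\eqref{eq:graded} with $D_{\rm grad}$ large enough (depending only on $b$) forces $PL'P^T$ to be block-lower-triangular for the adaptive structure, and symmetrically for the $U$-factor; absorbing $D'$ into either factor then gives a genuine block-$LU$-factorization of (a further small perturbation of) $M$ adapted to the $\XX_\ell$, with $U,U^{-1}$ bounded on $\ell_2$. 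Feeding this, together with~\eqref{eq:schauder} and~\eqref{eq:matinfsup}, into Theorem~\ref{thm:luqo2} yields general quasi-orthogonality~\eqref{eq:qosum}, i.e.\ (A3), for the chosen $\eps>0$. I expect the Jaffard-class decay estimate for $M$ — tracking the competing mesh-size scalings across levels and verifying~\eqref{eq:Mexp}--\eqref{eq:Mdist} for $d_2$ — to be the main obstacle, with the block-structure bookkeeping of Lemma~\ref{lem:blockstruct} a close second.
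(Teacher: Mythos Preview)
Your overall strategy is right and matches the paper's, but there is a genuine gap in how you dispose of the block-diagonal factor $D'$. After permutation, $PD'P^T$ is \emph{not} block-diagonal (or even block-triangular) with respect to the adaptive block structure $N_1,N_2,\ldots$: two basis functions of the same uniform level can enter the adaptive hierarchy at different steps $\ell$, so the nonzero entries of $PD'P^T$ cross adaptive blocks in both directions. Consequently, ``absorbing $D'$ into either factor'' destroys the adaptive block-triangularity that Lemma~\ref{lem:blockstruct} gave you for $PL'P^T$ and $PU'P^T$, and you do not end up with a block-$LU$-factorization adapted to the $\XX_\ell$. Lemma~\ref{lem:blockstruct} only applies to $L'$ and $U'$ because their inverses lie in $\BB(d_2,b)$ with identity diagonal blocks; neither property holds for $D'$.

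The paper handles this with an additional step you are missing. Since $D'$ is block-diagonal in the uniform structure and banded in $\BB(d_2)$, one has $PD'P^T\in\BB(d_3)\subset\JJ(d_3)$ --- note the switch to the metric $d_3$, which is the one satisfying~\eqref{eq:Mdist}. One then argues that $PD'P^T$ satisfies~\eqref{eq:matinfsup} for the \emph{adaptive} block structure (this follows from~\eqref{eq:matinfsup} for $A$ via~\eqref{eq:approxMat} together with the uniform-in-$\eps$ invertibility and adaptive block-triangularity of $PL'P^T$, $PU'P^T$), and applies Theorem~\ref{thm:LU} --- the Jaffard-class $LU$ result, not Theorem~\ref{thm:blockLU} --- to obtain a further factorization $PD'P^T=\widetilde L\widetilde U$ with bounded, inversely bounded factors that \emph{are} block-triangular for the adaptive structure. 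The factorization fed into Theorem~\ref{thm:luqo2} is then $(PL'P^T\widetilde L)(\widetilde U\,PU'P^T)$. A secondary point: invoking Theorem~\ref{thm:blockLU} requires~\eqref{eq:matinfsup} for the \emph{uniform} block structure on the permuted matrix, which does not follow from the adaptive inf-sup; the paper obtains it by observing that the first $n_{k+1}-1$ permuted basis functions span a Taylor-Hood space on some $\TT\in\T$, so that~\cite{boffi} applies.
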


\begin{proof}
\emph{Step~1:}
With the basis $\bB$, define the matrix $A\in\R^{\N\times \N}$ by
\begin{align*}
 A_{ij}:=a(\bw_j,\bw_i). 
\end{align*}
With the two different block structures defined above, we consider the permuted matrix
\begin{align*}
 \widetilde A_{ij}:= A_{\pi(i)\pi(j)}\quad\text{or}\quad  \widetilde A=P^TAP.
\end{align*}
Since $\bB$ is a Riesz basis,~\eqref{eq:infsup}--\eqref{eq:discinfsup} of $a(\cdot,\cdot)$ imply that $A$ satisfies~\eqref{eq:matinfsup}.
By definition of $\bB$, we observe that $\set{\bw_{\pi(i)}}{i=1,\ldots,n_{k+1}-1}$ spans the space
\begin{align*}
\Big(\SS^2_0(\widehat\TT_k)^2\times \SS^1_\star(\widehat\TT_k)\Big)\cap \bigcup_{\ell\in\N}\Big(\SS^2_0(\TT_\ell)^2\times \SS^1_\star(\TT_\ell)\Big)=\SS^2_0(\TT)^2\times \SS^1_\star(\TT),
\end{align*}
where $\TT\in\T$ is the unique mesh which contains exactly the nodes and edges that appear in $\widehat\TT_k$ and additionally in at least one (and thus all the subsequent)
of the adaptive meshes $(\TT_\ell)_{\ell\in\N_0}$. Therefore,~\cite{boffi} shows that~\eqref{eq:discinfsup} is
satisfied for $\XX_\TT:=\SS_0^2(\TT)^2\times \SS^1_\star(\TT)$ (for all $r\in\N$) and hence~\eqref{eq:matinfsup} holds also for the uniform block-structure on $\widetilde A$.

\emph{Step~2:}
Lemmas~\ref{lem:Ascaling}--\ref{lem:Bscaling} below show that there exists $ \widetilde A^\eps$ which is block-banded (with the uniform block-structure $n_1,n_2,\ldots$) for some bandwidth $b$ such that
\begin{align*}
 \norm{ \widetilde A- \widetilde A^\eps}{2}\leq \eps.
\end{align*}
Moreover, if we identify $i\mapsto \bw_i$, there holds $ \widetilde A^\eps\in\BB(d_2)$ with the metric $d_2(\cdot,\cdot)$ from Definition~\ref{def:metric}.
Choosing $\eps>0$ sufficiently small, we ensure that $\widetilde A^\eps$ satisfies~\eqref{eq:matinfsup} as well.
Thus, Theorem~\ref{thm:blockLU} shows that there exists an approximate block-$LDU$-factorization with $\norm{ \widetilde A-LDU}{2}\leq 2\eps$, i.e., 
\begin{align}\label{eq:approxMat}
 \norm{A -  (PLP^T)(PDP^T)(PU P^T)}{2}\leq 2\eps.
\end{align}
The factors $L^{-1},D,U^{-1}\in\BB(d_2)$ are block-banded (with respect to the uniform block structure $n_1,n_2,\ldots$), bounded, and have identity diagonal blocks.
Thus, Lemma~\ref{lem:blockstruct} applies and shows that $PLP^T$ and $PUP^T$ are block-triangular with respect to the adaptive block-structure $N_1,N_2,\ldots$.

\emph{Step~3:}
Since $D$ is block-diagonal and satisfies~\eqref{eq:matinfsup} (according to Theorem~\ref{thm:blockLU}, with the uniform block structure $n_1,n_2,\ldots$), we also have $PDP^T\in\BB(d_3)$ for the metric $d_3(\cdot,\cdot)$ from Definition~\ref{def:metric} (again identifying $i\mapsto \bw_i$). This shows $PDP^T\in \JJ(d_3)$. Since $A$ satisfies~\eqref{eq:matinfsup}, the estimate~\eqref{eq:approxMat} together with the fact that $PLP^T$ and $PUP^T$ are block-triangular and inversely bounded (uniformly in $\eps$, see Theorem~\ref{thm:blockLU}) shows that $PDP^T$ satisfies~\eqref{eq:matinfsup} for the adaptive block-structure $N_1,N_2,\ldots$  as long as 
$\eps>0$ is sufficiently small.
Thus, Theorem~\ref{thm:LU} implies the existence of a bounded block-$LU$-factorization $PDP^T = \widetilde L\widetilde U$ (with respect to the adaptive block-structure) with inversely bounded factors $\widetilde L,\widetilde U\in\R^{\N\times\N}$. Altogether, we found a matrix $A^\eps:= P \widetilde A^\eps P^T$ with  block-$LU$-factorization $A^\eps = (PLP^T\widetilde L ) (\widetilde U P U P^T)$ with inversely bounded factors. Finally, Theorem~\ref{thm:luqo2} with $\widetilde \XX$ instead of $\XX$ applies and concludes the proof.

\end{proof}

\section{Technical results and auxiliary lemmas}\label{sec:tech}
This final section covers some of the more technical aspects of the work.
\subsection{Almost bandedness of differential operator matrices}\label{section:banded}
In this section, we prove that the operator matrices coming from the Stokes problem in the Riesz basis are arbitrarily 
close to banded matrices in the sense of Definition~\ref{def:banded}. The results will not surprise anyone familiar with wavelet analysis but 
cannot be found in the literature directly as we use a very particular Riesz basis. The main proof technique is to exploit locality of the basis functions as well as 
strengthened Cauchy-Schwarz inequalities between the levels.
\begin{definition}\label{def:sobslob}
 Define the Sobolev-Slobodeckij semi norm 
 \begin{align*}
 |v|_{H^s(\omega)}^2:= \int_\omega\int_\omega |v(x)-v(y)|/|x-y|^{2+2s}\,dx\,dy\quad\text{for } 0<s<1.
 \end{align*}
 For $s=\nu+r\in\R$ with $\nu\in\N$ and $s\in (0,1)$, define $|\cdot|_{H^s(\omega)}:=|\nabla^\nu(\cdot)|_{H^r(\Omega)}$, where $\nabla^\nu$ 
 denotes the tensor of all partial derivatives of order  $\nu$. 
 As shown in~\cite{heu}, $\norm{\cdot}{H^\nu(\omega)}+|\cdot|_{H^r(\omega)}$  is equivalent to the $H^s$-norm obtained  via (real) interpolation.
 The norm equivalence constants depend only on the shape of $\omega$.
\end{definition}
\begin{lemma}\label{lem:restriction}
Let $v\in H^s(\omega)$ for some $0\leq s<1/2$ and some Lipschitz domain $\omega\subseteq \Omega$.
Then, there holds for the extension of $v$ to $\Omega$ by zero
\begin{align*}
 \norm{v}{H^s(\Omega)}\leq C_{\rm res} \norm{v}{H^s(\omega)}.
\end{align*}
The constant $C_{\rm res}>0$ does only depend on the shape of $\omega$.
\end{lemma}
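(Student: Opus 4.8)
The plan is to deduce the estimate from the Sobolev--Slobodeckij description of $H^s$ combined with a fractional Hardy inequality. Write $\tilde v$ for the extension of $v$ to $\Omega$ by zero. By the norm equivalence recalled after Definition~\ref{def:sobslob} (for $0<s<1/2$ it reads $\norm{\cdot}{H^s(\omega)}\simeq\norm{\cdot}{L^2(\omega)}+|\cdot|_{H^s(\omega)}$, and similarly on $\Omega$), it suffices to bound $\norm{\tilde v}{L^2(\Omega)}^2+|\tilde v|_{H^s(\Omega)}^2$ by a constant times $\norm{v}{H^s(\omega)}^2$. The $L^2$-term is harmless, since $\tilde v$ vanishes on $\Omega\setminus\omega$ and therefore $\norm{\tilde v}{L^2(\Omega)}=\norm{v}{L^2(\omega)}\le\norm{v}{H^s(\omega)}$; so only the seminorm $|\tilde v|_{H^s(\Omega)}^2$ has to be controlled, and we may assume $0<s<1/2$, the case $s=0$ being trivial with $C_{\rm res}=1$.

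First I would split the domain of the double integral $\Omega\times\Omega$ into the four regions cut out by $\omega$ and $\Omega\setminus\omega$. On $\omega\times\omega$ the integrand reproduces exactly $|v|_{H^s(\omega)}^2$; on $(\Omega\setminus\omega)\times(\Omega\setminus\omega)$ it vanishes because $\tilde v\equiv0$ there. On each of the two remaining (mutually symmetric) regions the integrand equals $|v(x)|^2\,|x-y|^{-2-2s}$ with $x\in\omega$ and $y\in\Omega\setminus\omega$; since every such $y$ satisfies $|x-y|\ge\dist(x,\partial\omega)$, integrating out $y$ over the larger region $\set{z\in\R^2}{|z|\ge\dist(x,\partial\omega)}$ gives a bound $\lesssim\dist(x,\partial\omega)^{-2s}$, and the convergence of this integral is exactly where $s>0$ enters. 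Collecting the pieces yields
\begin{align*}
 |\tilde v|_{H^s(\Omega)}^2\ \lesssim\ |v|_{H^s(\omega)}^2 + \int_\omega\frac{|v(x)|^2}{\dist(x,\partial\omega)^{2s}}\,dx .
\end{align*}

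The hard part will be the last integral, which I would estimate by the \emph{fractional Hardy inequality}: for $0<s<1/2$ and a Lipschitz domain $\omega$ one has $\int_\omega|v|^2\,\dist(\cdot,\partial\omega)^{-2s}\,dx\lesssim\norm{v}{H^s(\omega)}^2$ with a constant depending only on the Lipschitz character (the shape) of $\omega$. This is classical and can be found in the standard references on fractional Sobolev spaces on nonsmooth domains (e.g.\ the monographs of Grisvard and of Lions--Magenes); it is precisely here that the restriction $s<1/2$ is indispensable, the inequality failing at $s=1/2$ because $H^{1/2}$-functions need not have vanishing trace, and this is what makes $C_{\rm res}$ depend only on the shape of $\omega$. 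If a self-contained argument is preferred, one covers $\partial\omega$ by finitely many bi-Lipschitz graph charts, flattens each chart, and reduces to the one-dimensional Hardy inequality in the normal direction together with Fubini's theorem. Inserting this bound into the previous display, together with $|v|_{H^s(\omega)}\le\norm{v}{H^s(\omega)}$, completes the proof; every step apart from the Hardy estimate is a direct computation with the Slobodeckij seminorm.
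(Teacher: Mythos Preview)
Your proof follows the same initial decomposition as the paper: both split the Slobodeckij double integral over $\Omega\times\Omega$ according to $\omega$ and $\Omega\setminus\omega$, reducing the question to controlling the cross term $\int_\omega |v(y)|^2\int_{\Omega\setminus\omega}|x-y|^{-2-2s}\,dx\,dy$. At this point the arguments diverge. The paper asserts that the inner integral is $\lesssim 1$ uniformly in $y\in\omega$ and thereby bounds the cross term directly by $\norm{v}{L^2(\omega)}^2$; you instead bound the inner integral by $\dist(y,\partial\omega)^{-2s}$ and invoke the fractional Hardy inequality to control the resulting weighted $L^2$-integral by $\norm{v}{H^s(\omega)}^2$.

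Your route is the more careful one. The paper's uniform bound does not hold as written when $y$ approaches $\partial\omega$: in two-dimensional polar coordinates centred at $y$ one has $\int_{\Omega\setminus\omega}|x-y|^{-2-2s}\,dx\simeq\int_{\dist(y,\partial\omega)}^{\diam(\Omega)} r^{-2s-1}\,dr\simeq\dist(y,\partial\omega)^{-2s}$, which blows up rather than staying $\lesssim 1$. The fractional Hardy inequality you bring in (valid precisely for $0<s<1/2$ on Lipschitz domains, with constant depending only on the Lipschitz character) is exactly the missing ingredient needed to close the argument. In short, you follow the paper's strategy but repair its key step, and your justification for why $s<1/2$ is essential is also sharper than the paper's.
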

\begin{proof}
There holds
 \begin{align*}
  |v|_{H^s(\Omega)}^2-|v|_{H^s(\omega)}^2 &= 2\int_{\Omega\setminus\omega} \int_\omega \frac{|v(x)-v(y)|^2}{|x-y|^{2s+2}}\,dy\,dx
  \\
  &= 2\int_\omega |v(y)|^2\int_{\Omega\setminus\omega}\frac{1}{|x-y|^{2s+2}}\,dx\,dy\\
  &\lesssim \int_\omega |v(y)|^2\,dy=\norm{v}{L^2({\rm supp}(v))}^2,
 \end{align*}
 where we used that in polar coordinates centered at $y$, the inner integral reads
 \begin{align*}
  \int_{\Omega\setminus\omega}\frac{1}{|x-y|^{2s+2}}\,dx=\int_{\Omega\setminus\omega} r^{-2s}\,dr \lesssim 1.
 \end{align*}
 This, together with norm equivalence discussed in Definition~\ref{def:sobslob} concludes the proof.
\end{proof}

\begin{lemma}\label{lem:scaling}
 Let $k\leq \ell\in\N$, let $v\in \PP^2(\widehat\TT_k)$ with ${\rm supp}(v)\subseteq \bigcup \omega^n(T,\widehat\TT_k)$ for some $T\in\widehat\TT_k$. Additionally, let $w\in B^1_\ell\cup B^0_\ell$.
 Then, there holds for $0\leq s< 3/2$ if $v\in H^s({\rm supp}(v)\cup{\rm supp}(w))$ that
 \begin{align*}
  \norm{v}{H^s({\rm supp}(w))}\leq C_{\rm sc}C_{\rm mesh}^{k-\ell}\norm{v}{H^s({\rm supp}(v))}.
 \end{align*}
The constant $C_{\rm sc}>0$ depends only on $\TT_0$ and $n\in\N$.
\end{lemma}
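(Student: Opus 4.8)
The plan is a scaling argument: localise both supports into a single patch of $\widehat\TT_k$ of bounded cardinality, rescale this patch to a fixed reference configuration in which $C_{\rm mesh}$ drops out, and prove the resulting reference estimate by finite-dimensional norm equivalence for (piecewise) polynomials together with elementary bounds on Sobolev norms over small subdomains. First I would dispose of the trivial case: if ${\rm supp}(v)\cap{\rm supp}(w)=\emptyset$, then $v\equiv 0$ on ${\rm supp}(w)$ and the left-hand side vanishes. Otherwise, by Definition~\ref{def:widehat} every element of $\widehat\TT_k$ has diameter $\simeq C_{\rm mesh}^{-k}$ and $\Omega^\ast:=\bigcup\omega^{n+1}(T,\widehat\TT_k)$ consists of at most $C(\TT_0,n)$ such elements; since $\diam({\rm supp}(w))\simeq C_{\rm mesh}^{-\ell}\le C_{\rm mesh}^{-k}$ by Theorems~\ref{thm:stableB1}--\ref{thm:stableB0} and ${\rm supp}(w)$ meets ${\rm supp}(v)\subseteq\bigcup\omega^{n}(T,\widehat\TT_k)$, one obtains ${\rm supp}(v)\cup{\rm supp}(w)\subseteq\Omega^\ast$ and $v|_{\Omega^\ast}\in\PP^2(\widehat\TT_k|_{\Omega^\ast})$. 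Next I would apply the affine dilation $\Phi$ of ratio $C_{\rm mesh}^{-k}$ carrying a reference patch $\widehat\Omega:=\Phi^{-1}(\Omega^\ast)$ of diameter $\simeq 1$ onto $\Omega^\ast$; since $\widehat\TT_k$ is a uniform refinement of $\TT_0$, the rescaled mesh $\Phi^{-1}(\widehat\TT_k|_{\Omega^\ast})$ belongs to one of finitely many shape types determined by $\TT_0$ and $n$. Writing $\widehat v:=v\circ\Phi$, $\widehat\omega_v:=\Phi^{-1}({\rm supp}(v))$, $\widehat\omega_w:=\Phi^{-1}({\rm supp}(w))$ with $\widehat h:=\diam(\widehat\omega_w)\simeq C_{\rm mesh}^{k-\ell}\in(0,1]$, and decomposing the $H^s$-norm into the parts $\norm{\nabla^{\nu}\cdot}{L^2}$ for $\nu=0,\dots,\lfloor s\rfloor$ and the Slobodeckij seminorm $\seminorm{\nabla^{\lfloor s\rfloor}\cdot}{H^{s-\lfloor s\rfloor}}$ as in Definition~\ref{def:sobslob}, the substitution $x=\Phi(\widehat x)$ shows that each such constituent of $v$ over ${\rm supp}(w)$ equals the same explicit power of $C_{\rm mesh}^{-k}$ times the corresponding constituent of $\widehat v$ over $\widehat\omega_w$, and likewise for ${\rm supp}(v)$ and $\widehat\omega_v$. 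Hence it suffices to bound each constituent of $\widehat v$ over $\widehat\omega_w$ by $C(\TT_0,n)\,\widehat h$ times the full $H^s$-norm of $\widehat v$ over $\widehat\omega_v$; the $C_{\rm mesh}$-powers then cancel and the assertion follows.

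For the reference estimate the $L^2$-type constituents are routine: $|\widehat\omega_w|\lesssim\widehat h^2$, $\nabla^\nu\widehat v$ is supported in $\overline{\widehat\omega_v}$, only $\lesssim 1$ reference elements meet $\widehat\omega_w$, and on each reference element $\widehat T$ the inverse estimate $\norm{\nabla^\nu\widehat v}{L^\infty(\widehat T)}\lesssim\norm{\widehat v}{L^2(\widehat T)}$ holds with a constant depending only on the (finitely many) shapes; together this gives $\norm{\nabla^\nu\widehat v}{L^2(\widehat\omega_w)}\lesssim\widehat h\,\norm{\widehat v}{L^2(\widehat\omega_v)}$. For the fractional constituent I would split the double integral over $\widehat\omega_w$ into same-element and cross-element parts. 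On a single reference element $\widehat T$, where $\nabla^{\lfloor s\rfloor}\widehat v$ is a polynomial, a Taylor expansion combined with $\int_{\widehat\omega_w\cap\widehat T}\int_{\widehat\omega_w\cap\widehat T}|x-y|^{-2(s-\lfloor s\rfloor)}\,dx\,dy\lesssim\widehat h^{\,4-2(s-\lfloor s\rfloor)}\le\widehat h^{\,2}$, supplemented — when $s-\lfloor s\rfloor$ is close to $1$, to keep the constant $s$-robust — by the interpolation inequality $\seminorm{\cdot}{H^{r}(\widehat\omega_w)}\le\norm{\cdot}{L^2(\widehat\omega_w)}^{1-r}\norm{\cdot}{H^1(\widehat\omega_w)}^{r}$ and the already-proven $L^2$- and $H^1$-bounds, produces the surplus factor $\widehat h$ times $\norm{\widehat v}{L^2(\widehat\omega_v)}$ via norm equivalence on the finite-dimensional polynomial space.

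The step I expect to be the main obstacle is the cross-element contribution of the fractional seminorm over the small set $\widehat\omega_w$: when $\widehat\omega_w$ straddles an edge of the reference mesh and $\widehat v$ (or one of its derivatives) jumps across it, one has to show that this jump contribution still carries a surplus power of $\widehat h$ sufficient to be absorbed into $\widehat h\,\norm{\widehat v}{H^s(\widehat\omega_v)}$, uniformly in $s\in[0,3/2)$; this is precisely where the hypothesis $v\in H^s({\rm supp}(v)\cup{\rm supp}(w))$, the shape-robust norm equivalence of Definition~\ref{def:sobslob}, and the shape regularity of $\TT_0$ enter. Concretely I would, for integration points $x,y$ in two adjacent reference elements, insert the value of $\widehat v$ at a point of the shared edge lying on the segment $[x,y]$ (whose position relative to $x$ and $y$ is controlled by shape regularity), estimate the two resulting near-polynomial differences as in the same-element case, and control the remaining genuine jump term by a weighted edge integral, which again gains a power of $\widehat h$ because $\widehat\omega_w$ has diameter $\widehat h$. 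Collecting the constituent bounds and reversing the localisation and rescaling of the first paragraph then yields the asserted inequality with $C_{\rm sc}=C_{\rm sc}(\TT_0,n)$.
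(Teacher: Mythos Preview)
Your approach differs substantially from the paper's. The paper proves the cases $s=0$ and $s=1$ by a direct scaling argument (mapping ${\rm supp}(v)$ and ${\rm supp}(w)$ separately to unit-area reference shapes, passing through $L^\infty$ and using finite-dimensionality, much as you do for the $L^2$-constituents), and then obtains all $0<s<1$ by \emph{operator interpolation} of the restriction map $T_w\colon v\mapsto v|_{{\rm supp}(w)}$ between the $L^2$- and $H^1$-endpoints; the range $1<s<3/2$ follows by applying the same to $\nabla v$. This sidesteps any explicit analysis of Slobodeckij integrals or cross-element jumps entirely.

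Your direct route has a genuine gap precisely where you flag it. After your rescaling, the cross-element contribution of the Slobodeckij seminorm of (the relevant derivative of) $\widehat v$ over the small set $\widehat\omega_w$ is, for a jump of size $J$ across an interior edge, of order $J\,\widehat h^{\,1-r}$ with $r:=s-\lfloor s\rfloor\in(0,1)$; this is the standard computation
\[
\int_{\widehat\omega_w^+}\int_{\widehat\omega_w^-}\frac{dx\,dy}{|x-y|^{2+2r}}\ \simeq\ \widehat h^{\,2-2r}.
\]
Your edge-insertion trick isolates exactly this term but cannot improve its power of $\widehat h$. On the other hand, the same seminorm over the unit-size patch $\widehat\omega_v$ contributes only $\simeq J$, and the lower-order constituents contribute nothing larger (by norm equivalence on the finite-dimensional reference space). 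Hence the constituent-wise bound you need---a full factor $\widehat h$---fails; you only obtain $\widehat h^{\,1-r}$. Note also that your reduction ``bound each constituent on $\widehat\omega_w$ by $\widehat h$ times the \emph{full} $H^s$-norm on $\widehat\omega_v$'' is not what makes the $C_{\rm mesh}$-powers cancel: that requires bounding each constituent by the \emph{same} constituent on $\widehat\omega_v$, since the $L^2$-, $H^1$- and fractional parts scale with different powers of $C_{\rm mesh}^{-k}$ under your dilation $\Phi$. The paper's interpolation argument avoids this difficulty altogether.
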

\begin{proof}
 First, we prove the statement for $s=0$.
 To that end, consider the affine transformations $\phi_v,\phi_w\colon \R^2\to\R^2$ with $\phi_v(\omega_v)= {\rm supp}(v)$ and $\phi_w(\omega_w)= {\rm supp}(w)$, where
 $\omega_v$ and $\omega_w$ have unit area and belong to a finite family of shapes depending only on $\TT_0$ and $n\in\N$. This is possible since, by assumption, the supports
 of $v$ and $w$ are contained within a generalized patch and newest-vertex bisection produces only finitely many patch shapes. We obtain
 \begin{align*}
 \norm{v}{L^2({\rm supp}(w))}^2&=|{\rm supp}(w)|\norm{v\circ \phi_w}{L^2(\omega_w)}^2\leq |{\rm supp}(w)|\norm{v\circ \phi_w}{L^\infty(\omega_w)}^2\\
 &\leq 
 |{\rm supp}(w)|\norm{v\circ \phi_v}{L^\infty(\omega_v)}^2,
 \end{align*}
where we used the fact that $\phi_v(\omega_v)={\rm supp}(v)$ in the last estimate. Since $v\circ \phi_v$ belongs to a finite dimensional space which depends only on $\omega_v$, we further conclude
\begin{align*}
  |{\rm supp}(w)|\norm{v\circ \phi_v}{L^\infty(\omega_v)}^2&\lesssim
 |{\rm supp}(w)|\norm{v\circ \phi_v}{L^2(\omega_v)}^2\simeq \frac{|{\rm supp}(w)|}{|{\rm supp}(v)|}\norm{v}{L^2({\rm supp}(v))}^2.
\end{align*}
Both estimates and the support size estimates in Theorems~\ref{thm:stableB1}\&\ref{thm:stableB0} conclude the statement for $s=0$.
By applying the case $s=0$ to $\nabla v$ and $w$, we immediately prove the case $s=1$.
To get the intermediate cases, we employ the interpolation lemma for the operators $T^s_w\colon \PP^2(\omega^n(T,\widehat\TT_k))\cap H^s(\Omega)\to \PP^2(\omega^n(T,\widehat\TT_k)\cap{\rm supp}(w))\cap H^s(\Omega)$.
The above shows that $T_w^1$ and $T_w^0$ are uniformly bounded for all $w\in B_\ell^1\cup B_\ell^0$, with operator norms $C_{\rm sc}C_{\rm mesh}^{\ell-k}$. The interpolation lemma
implies boundedness for all $T_w^s$, $0\leq s\leq 1$, where the operators norms are independent of $T$ or $w$.
The cases $1<s<3/2$ follow from the same argument applied to $\nabla v$.
\end{proof}

\begin{lemma}\label{lem:Ascaling}
Let $M_{ij}:=\dual{\nabla v_i}{\nabla v_j}_\Omega$ for all $i,j\in\N$ with $v_i,v_j\in B^{1}$.
Given $\eps>0$, there exists $M^\eps\in\R^{\N\times \N}$ and a constant $C_M>0$ such that
\begin{align*}
 \norm{M^\eps-M}{2}\leq \eps.
\end{align*}
as well as
\begin{align}\label{eq:Abanded}
 \Big(|L(v_i)-L(v_j)|> C_M \text{ or } d_1(v_i,v_j)> C_M\Big)\quad\implies\quad M^\eps_{ij}=0.
\end{align}
\end{lemma}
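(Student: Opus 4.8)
The plan is to split each entry $M_{ij}=\langle\nabla v_i,\nabla v_j\rangle_\Omega$ into a part governed by the spatial distance of the supports and a part governed by the level difference, and then to conclude by a (weighted) Schur estimate. Since $M$ is symmetric we may assume $v_i\in B^1_\ell$, $v_j\in B^1_k$ with $\ell\le k$. The spatial part is exact: by the support-size estimate $\diam({\rm supp}(v))\simeq C_{\rm mesh}^{-\ell}$ for $v\in B^1_\ell$ from Theorem~\ref{thm:stableB1} and the definition of the graph distances entering $d_1$ (Definition~\ref{def:metric}), there is a constant $C_0$ depending only on $\TT_0$ and $C_{\rm mesh}$ with $d_1(v_i,v_j)>C_0\implies{\rm supp}(v_i)\cap{\rm supp}(v_j)=\emptyset\implies M_{ij}=0$. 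Hence truncating in the $d_1$-direction at any $C_M\ge C_0$ costs nothing, and it remains to control the level difference.

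For the level estimate I would prove a strengthened Cauchy--Schwarz bound $|M_{ij}|\lesssim C_{\rm mesh}^{-\alpha(k-\ell)}$ whenever the supports meet, with a fixed exponent $\alpha>1$. The cancellation is provided by the construction $v_i=(1-J^2_{\ell-1})v_0$ and $v_j=(1-J^2_{k-1})v_0'$ from Theorem~\ref{thm:stableB1}: the coarse factor $v_i$ lies in $\SS^2(\widehat\TT_\ell)$, so on ${\rm supp}(v_j)$ (of diameter $\simeq C_{\rm mesh}^{-k}$) it is, away from the $O(1)$ edges of $\widehat\TT_\ell$ meeting ${\rm supp}(v_j)$, a single polynomial at scale $C_{\rm mesh}^{-\ell}$; while the detail functions $(1-J^2_{k-1})v_0'$ are (nearly) orthogonal to the coarse polynomial spaces $\PP^2(\widehat\TT_{k-1})$ — in the same spirit as the zero-mean property of the pressure detail functions used in the proof of Theorem~\ref{thm:stableB0}. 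Pairing these two facts, moving a derivative onto the smooth factor by integration by parts, and then invoking the approximation estimates of the Scott--Zhang operators $J^2_\ell$, the scaling estimate~\eqref{eq:scaling1} together with standard inverse estimates to change the Sobolev order at a fixed level, Lemma~\ref{lem:scaling} to transport the norms of $v_i$ from ${\rm supp}(v_i)$ to the smaller set ${\rm supp}(v_j)$, and Lemma~\ref{lem:restriction} for the extension-by-zero bounds on the small supports, one obtains the claimed geometric decay. The only delicate contributions are the jump terms across the coarse edges of $\widehat\TT_\ell$, where $v_i$ is merely $C^0$: there the jump $[\partial_n v_i]$ does not decay, so the margin beyond first-order decay must be recovered from the cancellation of $v_j$ transverse to the edge via a mean-value/negative-norm estimate.

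Finally, set $M^\eps_{ij}:=M_{ij}$ if $|L(v_i)-L(v_j)|\le C_M$ and $d_1(v_i,v_j)\le C_M$, and $M^\eps_{ij}:=0$ otherwise, where $C_M$ will be chosen large in terms of $\eps$; then $M^\eps$ automatically satisfies~\eqref{eq:Abanded}. By the first step the $d_1$-restriction is vacuous, so $E:=M-M^\eps$ is supported on $\{|L(v_i)-L(v_j)|>C_M\}$. Applying the Schur test with weights $C_{\rm mesh}^{-\sigma L(v_j)}$ for a suitable $\sigma$, and using that a fixed $v_i\in B^1_\ell$ overlaps at most $\simeq C_{\rm mesh}^{2(k-\ell)}$ functions of $B^1_k$ when $k\ge\ell$ but only $O(1)$ when $k\le\ell$, the entry decay with $\alpha>1$ makes both the fine- and the coarse-level sums geometric with ratio $<1$ (this balancing is exactly where $\alpha>1$, rather than $\alpha>2$, is enough), whence $\|E\|_2\lesssim C_{\rm mesh}^{-c\,C_M}$ for some $c=c(\alpha,\sigma)>0$. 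Choosing $C_M=C_M(\eps)$ large yields $\|M^\eps-M\|_2\le\eps$.

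The main obstacle is the strengthened Cauchy--Schwarz inequality of the second paragraph: extracting, from the merely hierarchical (not genuinely wavelet) basis $B^1$ and through the careful interplay of Lemma~\ref{lem:scaling}, the scaling estimates~\eqref{eq:scaling1}, and the approximation/reproduction properties of the Scott--Zhang operators, a decay exponent strictly larger than one once the $C_{\rm mesh}^{2(k-\ell)}$ overlap count has been weighted away — and in particular the separate, genuinely technical treatment of the coarse-mesh edge terms where $v_i$ is only continuous.
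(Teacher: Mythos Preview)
Your overall plan --- truncate in the level index, keep the $d_1$-truncation for free by locality, and bound the tail by a weighted Schur test --- matches the paper, including the observation that an entry decay exponent $\alpha>1$ (not $\alpha>2$) suffices once the $C_{\rm mesh}^{2(k-\ell)}$ overlap count is absorbed into the weights. The difference is in how the strengthened Cauchy--Schwarz bound is obtained.

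The paper does \emph{not} integrate by parts element-wise. Instead it uses the fractional duality
\[
|\langle\nabla v_i,\nabla v_j\rangle_\Omega|\le \|\nabla v_i\|_{H^{1/4}({\rm supp}(v_j))}\,\|\nabla v_j\|_{\widetilde H^{-1/4}({\rm supp}(v_j))}.
\]
Lemma~\ref{lem:scaling} gives $\|\nabla v_i\|_{H^{1/4}({\rm supp}(v_j))}\lesssim C_{\rm mesh}^{-|k-\ell|}\|v_i\|_{H^{5/4}({\rm supp}(v_i))}$, and a single global integration by parts (using only that $v_j$ has compact support in $\Omega$, hence no boundary contribution) plus interpolation yields $\|\nabla v_j\|_{\widetilde H^{-1/4}({\rm supp}(v_j))}\lesssim\|v_j\|_{H^{3/4}({\rm supp}(v_j))}$. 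The scaling~\eqref{eq:scaling1} then gives $\alpha=5/4$ directly. No jump terms ever appear.

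Your route, by contrast, produces jump contributions $\int_E[\partial_n v_i]\,v_j\,ds$ across the edges of $\widehat\TT_\ell$, and a direct size count gives only exponent $1$ there. Your proposed fix --- ``cancellation of $v_j$ transverse to the edge'' and the analogy with the zero-mean pressure detail functions --- is not justified: the $B^1$ basis is built with the \emph{standard} Scott--Zhang operator $J^2_\ell$ of Lemma~\ref{lem:sz2}, which carries no moment or mean-zero property; that property was engineered only into $J^1_\ell$ for $B^0$ (Definition~\ref{def:sz1}, Lemma~\ref{lem:sz1}). So the claimed cancellation does not obviously hold, and the edge-term argument is a genuine gap as written. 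The paper's fractional-norm pairing sidesteps the issue entirely and is both shorter and cleaner.
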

\begin{proof}
Assume $k:=L(v_i)\leq \ell:=L(v_j)$. There holds
\begin{align*}
|\dual{\nabla v_i}{\nabla v_j}_\Omega| &\leq \norm{\nabla v_i}{H^{1/4}({\rm supp}(v_j))}\norm{\nabla v_j}{\widetilde H^{-1/4}({\rm supp}(v_j))}.
\end{align*}
Lemma~\ref{lem:scaling} proves 
$ \norm{\nabla v_i}{H^{1/4}({\rm supp}(v_j))}\lesssim \norm{v_i}{H^{5/4}({\rm supp}(v_j))}\lesssim C_{\rm mesh}^{-|k-\ell|}\norm{v_i}{H^{5/4}({\rm supp}(v_i))}$ as well as
\begin{align*}
 \norm{\nabla v_j}{\widetilde H^{-1}({\rm supp}(v_j))}&=\sup_{w\in H^{1}({\rm supp}(v_j))^2}\frac{\dual{\nabla v_j}{w}_{{\rm supp}(v_j)}}{\norm{w}{H^{1}({\rm supp}(v_j))}}\\
 &=\sup_{w\in H^{1}({\rm supp}(v_j))^2}\frac{-\dual{v_j}{{\rm div}w}_{{\rm supp}(v_j)}}{\norm{w}{H^{1}({\rm supp}(v_j))}}\\
 &\leq \norm{v_j}{L^2({\rm supp}(v_j))}.
\end{align*}
Together with the obvious estimate $\norm{\nabla v_j}{L^2({\rm supp}(v_j))}\leq \norm{v_j}{H^{1}({\rm supp}(v_j))}$, interpolation concludes $\norm{\nabla v_j}{\widetilde H^{-1/4}({\rm supp}(v_j))}\leq
\norm{v_j}{H^{3/4}({\rm supp}(v_j))}$. The above, together with~\eqref{eq:scaling1} shows
\begin{align}\label{eq:Ascaling}
 |\dual{\nabla v_i}{\nabla v_j}_\Omega|\lesssim  C_{\rm mesh}^{-5/4|k-\ell|}.
\end{align}
Symmetry of the problem shows the above also for $\ell\leq k$ and hence for all $i,j\in\N$.
We restrict the index set by
\begin{align*}
  I:=\set{(i,j)\in \N^2}{|L(v_i)-L(v_j)|\leq r}
 \end{align*}
 and define $M^\eps_{ij}:=M_{ij}$ for all $(i,j)\in I$ and zero elsewhere. 
 Note that $\#\set{v_j\in B_k^1}{M_{ij}\neq 0}\lesssim C_{\rm mesh}^{2(k-\min\{\ell,k\})}$ and $\#\set{v_i\in B^1_\ell}{M_{ij}\neq 0}\lesssim C_{\rm mesh}^{2(\ell-\min\{k,\ell\})}$.
 Estimate~\eqref{eq:Ascaling} and~\cite[Lemma~8.3]{fembemopt} with $q=C_{\rm mesh}^{-2}$ show $\norm{M-M^\eps}{2}\lesssim C_{\rm mesh}^{-r/4}$.
 The implication~\eqref{eq:Abanded} follows by definition of $I$.
 Thus, we conclude the proof by choosing $r\in\N$ sufficiently large.
\end{proof}

\begin{lemma}\label{lem:Bscaling}
Let $M_{ij}:=\dual{{\rm div}(v_i,0)}{w_j}_\Omega$ or $M_{ij}:=\dual{{\rm div}(0,v_i)}{w_j}_\Omega$ for all $i,j\in\N$ with $v_i\in B^{1}$ and $w_j\in B^0$.
Given $\eps>0$, there exists $M^\eps\in\R^{\N\times \N}$ and a constant $C_M>0$ such that
\begin{align*}
 \norm{M^\eps-M}{2}\leq \eps.
\end{align*}
as well as
\begin{align}\label{eq:Bbanded}
 \Big(|L(v_i)-L(w_j)|> C_M \text{ or } d_1(v_i,w_j)> C_M\Big)\quad\implies\quad M^\eps_{ij}=0.
\end{align}
\end{lemma}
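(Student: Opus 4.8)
The plan is to transcribe the proof of Lemma~\ref{lem:Ascaling} almost verbatim; the only genuinely new feature is that the entry $M_{ij}$ now couples a velocity basis function $v_i\in B^1$ with a pressure basis function $w_j\in B^0$, so the strengthened Cauchy--Schwarz estimate must be organised according to which of the two lives on the finer level, and the two resulting cases require \emph{different} dual pairings.

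First I would establish the pointwise decay $|M_{ij}|\lesssim C_{\rm mesh}^{-\sigma|L(v_i)-L(w_j)|}$ for some $\sigma>0$, distinguishing the two regimes. Since ${\rm div}(v_i,0)=\partial_1 v_i$ and ${\rm div}(0,v_i)=\partial_2 v_i$, it suffices to treat $M_{ij}=\dual{\partial_1 v_i}{w_j}_\Omega$. If $k:=L(v_i)\le \ell:=L(w_j)$, so the pressure sits on the finer scale, I estimate
\begin{align*}
 |M_{ij}|\le \norm{\partial_1 v_i}{H^{1/4}({\rm supp}(w_j))}\,\norm{w_j}{\widetilde H^{-1/4}({\rm supp}(w_j))}.
\end{align*}
Lemma~\ref{lem:scaling} yields $\norm{\partial_1 v_i}{H^{1/4}({\rm supp}(w_j))}\lesssim \norm{v_i}{H^{5/4}({\rm supp}(w_j))}\lesssim C_{\rm mesh}^{k-\ell}\norm{v_i}{H^{5/4}({\rm supp}(v_i))}$, which by~\eqref{eq:scaling1} and the $H^1$-normalization of $B^1$ becomes a factor $\lesssim C_{\rm mesh}^{k-\ell}C_{\rm mesh}^{-k/4}$; the second factor is $\simeq C_{\rm mesh}^{\ell/4}$ by~\eqref{eq:scaling0} and the $L^2$-normalization of $B^0$. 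In the opposite regime $k\ge \ell$, where the velocity is on the finer scale, one must \emph{not} integrate by parts onto $w_j$ (this would cost a full derivative on the coarse pressure function and wipe out the decay); instead I keep $\partial_1 v_i$ and use
\begin{align*}
 |M_{ij}|\le \norm{\partial_1 v_i}{\widetilde H^{-1/4}({\rm supp}(v_i))}\,\norm{w_j}{H^{1/4}({\rm supp}(v_i))}.
\end{align*}
Here $\norm{\partial_1 v_i}{\widetilde H^{-1/4}({\rm supp}(v_i))}$ is obtained by interpolating $\norm{\partial_1 v_i}{L^2}\le \norm{v_i}{H^1(\Omega)}\simeq 1$ against $\norm{\partial_1 v_i}{\widetilde H^{-1}({\rm supp}(v_i))}\le \norm{v_i}{L^2({\rm supp}(v_i))}\simeq C_{\rm mesh}^{k}$ --- the identity $\dual{\partial_1 v_i}{\phi}=-\dual{v_i}{\partial_1\phi}$ carries no boundary contribution because the continuous function $v_i$ vanishes on $\partial({\rm supp}(v_i))$ --- giving $\lesssim C_{\rm mesh}^{k/4}$, while Lemma~\ref{lem:scaling} and~\eqref{eq:scaling0} bound $\norm{w_j}{H^{1/4}({\rm supp}(v_i))}\lesssim C_{\rm mesh}^{\ell-k}C_{\rm mesh}^{-\ell/4}$. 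In both regimes the product decays exponentially in $|L(v_i)-L(w_j)|$.

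With this decay at hand I would proceed exactly as in Lemma~\ref{lem:Ascaling}: put $M^\eps_{ij}:=M_{ij}$ whenever $|L(v_i)-L(w_j)|\le r$ and $M^\eps_{ij}:=0$ otherwise. Since $M_{ij}=0$ unless ${\rm supp}(v_i)\cap{\rm supp}(w_j)\neq\emptyset$, for a fixed $v_i\in B^1_k$ there are at most $\lesssim C_{\rm mesh}^{2(\ell-\min\{k,\ell\})}$ indices $w_j\in B^0_\ell$ with $M_{ij}\neq 0$, and symmetrically for fixed $w_j$. Feeding the decay estimate together with these counts into~\cite[Lemma~8.3]{fembemopt} (with $q=C_{\rm mesh}^{-2}$) gives $\norm{M-M^\eps}{2}\lesssim C_{\rm mesh}^{-r/4}$, hence $\le\eps$ for $r$ chosen large enough. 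Finally, two basis functions with intersecting supports whose levels differ by at most $r$ automatically satisfy $d_1(v_i,w_j)\le C(r)$, so~\eqref{eq:Bbanded} holds with $C_M:=\max\{r,C(r)\}$, which completes the proof.

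The point that needs real care --- and which I expect to be the main obstacle --- is the $k\ge \ell$ case: unlike the symmetric Laplacian block treated in Lemma~\ref{lem:Ascaling}, the mixed block is rectangular, so this case is not covered by symmetry, and it is precisely the negative-order ($\widetilde H^{-1/4}$) norm of the fine-scale quantity $\partial_j v_i$ that balances the inverse estimate for the coarse pressure function and produces a positive decay exponent.
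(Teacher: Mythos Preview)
Your argument is correct, but it diverges from the paper precisely in the case you flag as the obstacle. For $k=L(v_i)\ge \ell=L(w_j)$ the paper does exactly what you warn against: it integrates by parts, writes $M_{ij}=-\dual{(v_i,0)}{\nabla w_j}_\Omega$, and bounds this by $\norm{v_i}{L^2({\rm supp}(v_i))}\,\norm{w_j}{H^1({\rm supp}(v_i))}$. The step you overlook is that the $H^1$-norm of the coarse pressure function is taken only on the small set ${\rm supp}(v_i)$, so Lemma~\ref{lem:scaling} still gains the factor $C_{\rm mesh}^{-(k-\ell)}$; combined with $\norm{v_i}{L^2}\simeq C_{\rm mesh}^{-k}$ and $\norm{w_j}{H^1({\rm supp}(w_j))}\simeq C_{\rm mesh}^{\ell}$ this yields the sharper decay $|M_{ij}|\lesssim C_{\rm mesh}^{-2|k-\ell|}$. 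Your alternative route via $\norm{\partial_1 v_i}{\widetilde H^{-1/4}({\rm supp}(v_i))}$ and interpolation is perfectly valid and delivers decay $5/4>1$, which is all that is needed; the only actual mistake is the parenthetical claim that integration by parts would ``wipe out the decay''. In the regime $k<\ell$ your treatment coincides with the paper's (which uses a generic $0<s<1/2$ where you fix $s=1/4$). One caveat: track the signs in your scaling factors --- with $v_i$ being $H^1$-normalised one has $\norm{v_i}{L^2}\simeq C_{\rm mesh}^{-k}$ and $\norm{v_i}{H^{5/4}}\simeq C_{\rm mesh}^{k/4}$, and with $w_j$ being $L^2$-normalised $\norm{w_j}{\widetilde H^{-1/4}}\simeq C_{\rm mesh}^{-\ell/4}$; with these corrections both regimes give decay exponent $5/4$, which is what your final bound $\norm{M-M^\eps}{2}\lesssim C_{\rm mesh}^{-r/4}$ needs.
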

\begin{proof}
Assume $k:=L(v_i)\geq \ell:=L(w_j)$.
Then, there holds with~\eqref{eq:scaling1}
\begin{align*}
 |M_{ij}|&=|\dual{(v_i,0)}{\nabla w_j}_\Omega |\leq \norm{v_i}{L^2({\rm supp}(v_i))}\norm{w_j}{H^1({\rm supp}(v_i))}\\
 &\lesssim C_{\rm mesh}^{-k}\norm{w_j}{H^1({\rm supp}(v_i))}.
\end{align*}
From this, Lemma~\ref{lem:scaling} together with~\eqref{eq:scaling0} conclude
\begin{align*}
  \norm{w_j}{H^1({\rm supp}(v_i))}
 &\lesssim C_{\rm mesh}^{-|k-\ell|}\norm{w_j}{H^{1}({\rm supp}(w_j))}\simeq  C_{\rm mesh}^{-|k-\ell|+ \ell}.
\end{align*}
Altogether, we obtain $|M_{ij}|\lesssim C_{\rm mesh}^{-2|k-\ell|}$.
For $k<\ell$, we have with Lemma~\ref{lem:scaling} as well as~\eqref{eq:scaling1}\&\eqref{eq:scaling0} for some $0<s<1/2$
\begin{align*}
 |M_{ij}|&\leq \norm{v_i}{H^{1+s}({\rm supp}(w_j))}\norm{w_j}{H^{-s}({\rm supp}(w_j))}\\
 &\lesssim C_{\rm mesh}^{-s\ell}\norm{v_i}{H^{1+s}({\rm supp}(w_j))}\\
 &\lesssim 
 C_{\rm mesh}^{-s\ell-|\ell-k|}\norm{v_i}{H^{1+s}({\rm supp}(v_i))}
 \lesssim C_{\rm mesh}^{-(1+s)|\ell-k|}.
\end{align*}
We restrict the index set by
\begin{align*}
  I:=\set{(i,j)\in \N^2}{|L(v_i)-L(v_j)|\leq r}
 \end{align*}
 and define $M^\eps_{ij}:=M_{ij}$ for all $(i,j)\in I$ and zero elsewhere. 
 Note that $\#\set{v_j\in B_k^1}{M_{ij}\neq 0}\lesssim C_{\rm mesh}^{2(k-\min\{\ell,k\})}$ and $\#\set{v_i\in B^1_\ell}{M_{ij}\neq 0}\lesssim C_{\rm mesh}^{2(\ell-\min\{k,\ell\})}$.
 The above estimates  and~\cite[Lemma~8.3]{fembemopt} with $q=C_{\rm mesh}^{-2}$ show $\norm{M-M^\eps}{2}\lesssim C_{\rm mesh}^{-s r}$.
 The implication~\eqref{eq:Abanded} follows by definition of $I$.
 Thus, we conclude the proof by choosing $r\in\N$ sufficiently large.
\end{proof}

\section{A modified Scott-Zhang projection}\label{section:szb}
This section introduces the operators $S_\ell^1$ and $S_\ell^2$ used for constructing the Riesz basis in Theorems~\ref{thm:stableB1}\&\ref{thm:stableB0}. First, we construct some slightly modified Scott-Zhang operator in Lemma~\ref{lem:sz1} which is stable on $L^2(\Omega)$ and
satisfies some integral mean property. This is important as we use that operator on the pressure space $L^2_\star(\Omega)$ which contains functions with zero integral mean. In a second step in Theorems~\ref{thm:szb1}\&\ref{thm:szb2}, we use the Scott-Zhang operators to construct
the operators $S^1_\ell$ and $S^2_\ell$ which inherit all the nice properties from the Scott-Zhang operators but additionally
are nested in the sense that $S^i_\ell S_k^i = S^i_\ell$ for all $i\in\{1,2\}$ and all $\ell\leq k\in\N$. 

\begin{figure}
 \includegraphics[width=0.5\textwidth]{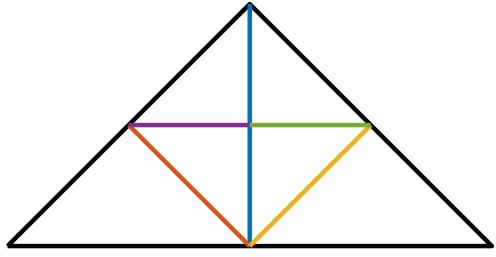}
 \centering
 \caption{The refinement pattern of {\rm bisec5} refinement.}\label{fig:bisec5}
\end{figure}

In the following, we define a particular basis of $\SS^{1}(\TT)$ with a certain moment condition.
\begin{definition}\label{def:sz1}
	Given a triangulation $\TT_c\in\T$ and it's bisec5 refinement $\TT\in{\rm bisec5}(\TT_c,\TT_c)$ (see Figure~\ref{fig:bisec5}) define for each
	 $T\in\TT_c$ the unique interior node $z_T\in\NN(\TT)$ with $z_T\in T\setminus\partial T$.
	 Define $\NN(\TT,\TT_c):=\set{z_T}{T\in\TT_c}$.
	For $z\in\NN(\TT)\setminus\NN(\TT,\TT_c)$ and the corresponding hat-function $v_z\in\SS^1_B(\TT)$, define
	\begin{align*}
	v_{z,\star}:=v_{z}+\sum_{\stacktwo{T\in\TT_c}{ v_z|_T\neq 0}}\alpha_T v_{z_T}
	\end{align*}
	where the $\alpha_T\in\R$ are
	chosen such that $\int_T v_{z,\star}\,dx =0$ for all $T\in\TT_c$.
	Define the basis
	\begin{align*}
	\SS^1_{B}(\TT,\TT_c):=	 \set{v_{z,\star}\in\SS^1_B(\TT)}{z\in\NN(\TT)\setminus\NN(\TT,\TT_c)}\cup\set{v_{z_T}\in\SS^1_B(\TT)}{T\in\TT_c}
	 \end{align*}
	 which satisfies $\SS^1(\TT)={\rm span}(\SS^1_B(\TT,\TT_c))$.
	For each $v\in\SS^1_B(\TT,\TT_c)$ define some $T_v\in\TT_c$ with $v|_{T_v}\neq 0$.
	Note that for all $T\in\TT_c$, the set $\set{v|_T}{v\in\SS^1_B(\TT,\TT_c)}\setminus\{0\}$ is a basis of $\SS^1(\TT|_T)$.
	This allows us to define the dual basis functions $v_T^\prime$ of $v|_T$ for all $v\in\SS^1_B(\TT,\TT_c)$ with $v|_T\neq 0$.
	Finally, we define $v^\prime:=v^\prime_{T_v}$ for all $v\in\SS^1_B(\TT,\TT_c)$.
	We define the Scott-Zhang projection as
	\begin{align*}
	 J_{\TT,\TT_c}^1 w:= \sum_{v\in\SS^1_B(\TT,\TT_c)} v\dual{w}{v^\prime}_{T_v}.
	\end{align*}
	\end{definition}

\begin{lemma}\label{lem:sz1}
	The Scott-Zhang operator $J_{\TT,\TT_c}^1\colon L^2(\Omega)\to \SS^1(\TT)$ defined in Definition~\ref{def:sz1} is a projection which satisfies for all $0\leq s\leq  1$
	and all $v\in H^s(\Omega)$
	\begin{align}
		\norm{J_{\TT,\TT_c}^1 v}{H^s(T)}&\leq C_{\rm sz} \norm{v}{H^s(\cup\omega(T,\TT_c))},\label{eq:szhs}\\
		\norm{J_{\TT,\TT_c}^1 v}{H^s(\Omega)}&\leq C_{\rm sz} \norm{v}{H^s(\Omega)},\label{eq:szOhs}
	\end{align}
	as well as  for all $0\leq r\leq s$
	\begin{align}
		\norm{(1-J_{\TT,\TT_c}^1) v}{H^r(T)}&\leq C_{\rm sz} {\rm diam}(T)^{s-r} |v|_{H^s(\cup\omega(T,\TT_c))},\label{eq:sza}\\
		\norm{(1-J_{\TT,\TT_c}^1) v}{H^r(\Omega)}&\leq C_{\rm sz} \norm{h_\TT^{1-r}\nabla v}{L^2(\Omega)}.\label{eq:szaO}
	\end{align}
	For $0<s<1/2$ and $v\in L^2(\Omega)$, $w\in \widetilde H^{-s}(\Omega)$, there holds
	\begin{align}
		\norm{J_{\TT,\TT_c}^1 w}{\widetilde H^{-s}(\Omega)}&\leq C_{\rm sz} \norm{w}{\widetilde H^{-s}(\Omega)},\label{eq:szOmhs}\\
		\norm{(1-J_{\TT,\TT_c}^1) v}{\widetilde H^{-s}(\Omega)}&\leq C_{\rm sz} \norm{h_\TT^{s}v}{L^2(\Omega)}.\label{eq:szmaO}
	\end{align}
	There holds $\int_T (1-J_{\TT,\TT_c})v\,dx =0$ for all $T\in\TT_c$.
	The constant $C_{\rm sz}>0$ depends only on the shape regularity of $\TT$, the fact that $\TT,\TT_c$ are generated from $\TT_0$ by newest vertex bisection and on a lower bound on $s>-1/2$. 
	The function $(J_{\TT,\TT_c}^1 v)|_T$ depends only on $v|_{\cup\omega(T,\TT_c)}$. 
\end{lemma}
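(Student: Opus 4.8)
The plan is to adapt the classical Scott--Zhang analysis~\cite{scottzhang}, tracking the two features of $J^1_{\TT,\TT_c}$ that deviate from the standard construction: the averaging domains are coarse cells $T_v\in\TT_c$ rather than faces, and each nodal function carries the interior-bubble correction $\alpha_T v_{z_T}$. First I would establish the projection property and locality, which are immediate from the dual-basis construction: for $w\in\SS^1(\TT)$ write $w=\sum_u c_u u$ over $u\in\SS^1_B(\TT,\TT_c)$; since $\dual{u|_{T_v}}{v'}_{T_v}=\delta_{uv}$, we get $\dual{w}{v'}_{T_v}=c_v$ and hence $J^1_{\TT,\TT_c}w=w$, so in particular $J^1_{\TT,\TT_c}$ reproduces affine functions; and $(J^1_{\TT,\TT_c}w)|_T$ involves only the finitely many $v$ with $v|_T\neq0$, each evaluated over $T_v\subseteq\bigcup\omega(T,\TT_c)$, which gives the stated locality. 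Along the way I would record the scaling facts used below: with $m_T:=\int_T v_{z_T}\,dx$, a bisec5-scaling argument yields $m_T\simeq|T|$, and using $\int_T v_{z,\star}\,dx=0$ together with $\norm{v_z}{L^\infty(\Omega)}=1$ one obtains $|\alpha_T|=|m_T|^{-1}|\int_T v_z\,dx|\lesssim1$; consequently each $v_{z,\star}$ obeys the usual $\SS^1$-nodal norm bounds up to a bounded number of $O(1)$ interior terms, and (after affine rescaling, the bisec5 pattern producing a uniformly small local basis whose Gram matrix has bounded condition number) the dual functions satisfy $\norm{v'}{H^\sigma(T_v)}\lesssim\diam(T_v)^{-\sigma}|T_v|^{-1/2}$ for $0\le\sigma\le1$.

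The genuinely new point is the integral-mean identity. For $T\in\TT_c$, among all $v\in\SS^1_B(\TT,\TT_c)$ only $v_{z_T}$ has $\int_T v\,dx\neq0$, since the $v_{z,\star}$ were built with vanishing integral on every cell of $\TT_c$ and the other interior functions $v_{z_{T'}}$, $T'\neq T$, are supported inside $T'$. Moreover $(v_{z_T})'_T$ is forced to equal the constant $m_T^{-1}$ on $T$: the function $m_T^{-1}\mathbf{1}_T$ pairs to $1$ with $v_{z_T}|_T$ and to $0$ with each $v_{z,\star}|_T$ by their vanishing integrals, so by uniqueness of the dual basis it coincides with $(v_{z_T})'_T$; also $T_{v_{z_T}}=T$. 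Hence $\int_T J^1_{\TT,\TT_c}w\,dx=\dual{w}{(v_{z_T})'_T}_T\int_T v_{z_T}\,dx=\int_T w\,dx$, which is the claim. As a corollary, $w\in\ker(J^1_{\TT,\TT_c})$ forces $\int_T w\,dx=\int_T J^1_{\TT,\TT_c}w\,dx=0$ for all $T\in\TT_c$, so $\ker(J^1_{\TT,\TT_c})\subseteq\PP^0(\TT_c)^\perp$, whence the $L^2$-adjoint $(J^1_{\TT,\TT_c})^*$ reproduces $\PP^0(\TT_c)$ and, like $J^1_{\TT,\TT_c}$ itself, is local with respect to $\TT_c$-patches. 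This corollary is exactly what makes the negative-order bounds work.

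The positive-order estimates \eqref{eq:szhs}--\eqref{eq:szaO} then follow from the Scott--Zhang template. For $s=0$, \eqref{eq:szhs} comes directly from expanding $(J^1_{\TT,\TT_c}v)|_T=\sum_{w|_T\neq0}\dual{v}{w'}_{T_w}w|_T$ and applying Cauchy--Schwarz with the scaling bounds; for $0<s\le1$ one subtracts the patch mean $p:=\bar v_{\bigcup\omega(T,\TT_c)}$ (reproduced by $J^1_{\TT,\TT_c}$), bounds $\norm{J^1_{\TT,\TT_c}(v-p)}{H^s(T)}$ by an inverse estimate times $\norm{v-p}{L^2(\bigcup\omega(T,\TT_c))}\lesssim\diam(T)^s\seminorm{v}{H^s(\bigcup\omega(T,\TT_c))}$ (Poincaré) and $\norm{p}{H^s(T)}\lesssim\norm{v}{L^2(\bigcup\omega(T,\TT_c))}$, and the interpolation lemma fills in non-integer $s$; squaring and summing over $T$ with bounded overlap of the patches gives \eqref{eq:szOhs}. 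The approximation bounds \eqref{eq:sza}, \eqref{eq:szaO} use the same mechanism: $(1-J^1_{\TT,\TT_c})v=(1-J^1_{\TT,\TT_c})(v-p)$, stability, the Bramble--Hilbert estimate $\inf_p\norm{v-p}{H^r(\bigcup\omega(T,\TT_c))}\lesssim\diam(T)^{s-r}\seminorm{v}{H^s(\bigcup\omega(T,\TT_c))}$, and summation, where one invokes $\diam(T)\simeq\diam(\bigcup\omega(T,\TT_c))$ and, for \eqref{eq:szaO}, $h_\TT\simeq h_{\TT_c}$ (true because $\TT$ is a single uniform bisec5 refinement of $\TT_c$).

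Finally the negative-order estimates, for $0<s<1/2$, where $\widetilde H^{\pm s}(\Omega)=H^{\pm s}(\Omega)$ so duality is available. For \eqref{eq:szmaO} I would use the integral-mean identity directly: for $\phi\in H^s(\Omega)$, $\dual{(1-J^1_{\TT,\TT_c})v}{\phi}_\Omega=\sum_{T\in\TT_c}\dual{(1-J^1_{\TT,\TT_c})v}{\phi-\bar\phi_T}_T$, then Cauchy--Schwarz, the $L^2$-stability $\norm{(1-J^1_{\TT,\TT_c})v}{L^2(T)}\lesssim\norm{v}{L^2(\bigcup\omega(T,\TT_c))}$, the Poincaré bound $\norm{\phi-\bar\phi_T}{L^2(T)}\lesssim\diam(T)^s\seminorm{\phi}{H^s(T)}$, and summation yield $\dual{(1-J^1_{\TT,\TT_c})v}{\phi}_\Omega\lesssim\norm{h_{\TT_c}^sv}{L^2(\Omega)}\norm{\phi}{H^s(\Omega)}$, and $h_{\TT_c}\simeq h_\TT$ finishes. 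For \eqref{eq:szOmhs} I would pass to the adjoint, reducing it to boundedness of $(J^1_{\TT,\TT_c})^*\colon H^s(\Omega)\to H^s(\Omega)$ for $0<s<1/2$; since $(J^1_{\TT,\TT_c})^*$ is local with respect to $\TT_c$-patches, reproduces $\PP^0(\TT_c)$ (the corollary above), and has range consisting of functions that are polynomial on coarse cells and extended by zero — hence lying in $H^s$ only for $s<1/2$, which is where the restriction on $s$ comes from — the standard fractional-order Scott--Zhang argument applies: on each $T\in\TT_c$ subtract a patch mean before the inverse estimate to cancel the negative power of $\diam(T)$, use Lemma~\ref{lem:restriction} to pass from the elementwise to the global $H^s$-norm, and control the cross-cell seminorm contributions by the usual splitting together with the decay of $|x-y|^{-2-2s}$. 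I expect this last step, together with pinning down the integral-mean identity and the resulting $\PP^0(\TT_c)$-reproduction of the adjoint, to be the main technical obstacle; everything else is a routine transcription of the classical Scott--Zhang estimates.
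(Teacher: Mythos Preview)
Your proposal is correct. For the projection property, locality, the positive-order estimates~\eqref{eq:szhs}--\eqref{eq:szaO}, and the integral-mean identity, you follow essentially the paper's line; your derivation of $\int_T J^1_{\TT,\TT_c} w\,dx=\int_T w\,dx$ via the observation that $(v_{z_T})'$ is the constant $m_T^{-1}$ is in fact a bit more direct than the paper's argument, which reaches the same conclusion by first adding a combination $\sum_T\beta_T v_{z_T}$ to $v$ so that the modified input has zero mean on each coarse cell and then noting that the $v_{z_T}$-coefficients of $J^1_{\TT,\TT_c}$ vanish on such inputs.

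The genuine difference is in the negative-order bound~\eqref{eq:szOmhs}. You reduce it, by duality, to $H^s$-boundedness of the adjoint $(J^1_{\TT,\TT_c})^*$ and then run a Scott--Zhang-type argument on the adjoint (local, $\PP^0(\TT_c)$-reproducing, range in discontinuous piecewise polynomials, hence $H^s$ only for $s<1/2$); as you correctly flag, the cross-cell Sobolev--Slobodeckij contributions are the technical crux. The paper bypasses this entirely with a different device: since every dual function $v'$ lies in $\PP^1(\TT)$ on its coarse cell, one has $J^1_{\TT,\TT_c}v=J^1_{\TT,\TT_c}\Pi^1 v$, where $\Pi^1$ is the $L^2$-orthogonal projection onto $\PP^1(\TT)$. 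The splitting $J^1_{\TT,\TT_c}=\Pi^1-(1-J^1_{\TT,\TT_c})\Pi^1$ then yields~\eqref{eq:szOmhs} immediately from the known $\widetilde H^{-s}$-stability of $\Pi^1$, the integral-mean identity (Poincar\'e in dual form, which is exactly your argument for~\eqref{eq:szmaO}), and an inverse estimate; the same splitting also gives~\eqref{eq:szmaO}. Your route is self-contained and makes the role of the adjoint explicit; the paper's route is shorter because it outsources the negative-order stability to $\Pi^1$ and avoids any direct $H^s$-analysis of a discontinuous-range operator.
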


\begin{proof}
	The projection property follows as usual from the definition of the dual basis functions $v^\prime$.
	For $T\in\TT_c$, $(J_\TT v)|_T$ depends only on $v|_{T^\prime}$ for all $T^\prime\in\TT_c$ with $T,T^\prime\subseteq {\rm supp}(w)$
	for some $w\in\SS^1_B(\TT,\TT_c)$. This implies $T^\prime\in\omega(T,\TT_c)$.
	Thus, $(J_\TT v)|_T$ depends only on $v|_{\omega(T,\TT_c)}$.
	
	The estimates~\eqref{eq:szhs}--\eqref{eq:szaO} follow analogously to the proof of~\cite[Section~3.2]{energy}. It remains to prove the estimates for $-s$.
	To that end, note that the dual function $v_{z_T}^\prime$ (where $z_T$ was defined as the unique interior node of the element $T\in \TT_c$) is a constant on $T$. This follows from the fact that 
	there holds $\dual{1}{v}_T=0$ for all $v\in\SS^1_B(\TT,\TT_c)\setminus\set{v_{z_T}}{T\in\TT_c}$ and $\dual{1}{v_{z_T}}_T>0$. Uniqueness of the dual function
	implies that $v_{z_T}^\prime$ is some multiple of $1$ and hence constant.
	This and the projection property imply for all $T\in\TT_c$, all $v\in L^2(\Omega)$, and $w:=v+\sum_{T\in\TT_c}\beta_Tv_{z_T}$
	\begin{align*}
		(1-J_{\TT,\TT_c})v&=	(1-J_{\TT,\TT_c})w\\
		&=w-\Big(\sum_{T\in\TT_c} v_{z_T}\dual{w}{v_{z_T}^\prime}_{T}
		+\sum_{v\in\SS^1_B(\TT,\TT_c)\setminus\set{v_{z_T}}{T\in\TT_c}} v\dual{w}{v^\prime}_T\Big).
	\end{align*}
	Choosing the $\beta_T$ such that $\int_T w\,dx=0$ for all $T\in\TT_c$, we obtain $\dual{w}{v_{z_T}^\prime}_T=0$ and hence
	\begin{align*}
		\int_T(1-J_{\TT,\TT_c})v\,dx=0\quad\text{for all }T\in\TT_c.
	\end{align*}
Moreover, we have for the $L^2$-orthogonal projection $\Pi^1\colon L^2(\Omega)\to \PP^1(\TT)$ the indentity $J_{\TT,\TT_c}^1 v= 
J_{\TT,\TT_c}^1 \Pi^1 v$. For $0<s<1/2$, this together with a standard Poincar\'e estimate and an inverse estimate imply
\begin{align*}
	\norm{(\Pi^1-J_{\TT,\TT_c}^1) v}{\widetilde H^{-s}(\Omega)}&=
	\norm{(1-J_{\TT,\TT_c}^1) \Pi^1 v}{\widetilde H^{-s}(\Omega)}\lesssim \norm{h_{\TT_c}^s \Pi^1 v}{L^2(\Omega)}
	\\
	&\lesssim \norm{h_{\TT_c}/h_\TT}{L^\infty(\Omega)}\norm{\Pi^1 v}{\widetilde H^{-s}(\Omega)}\lesssim 
	\norm{v}{\widetilde H^{-s}(\Omega)},
\end{align*}
where we used the continuity $\Pi^1\colon \widetilde H^{-s}(\Omega)\to \widetilde H^{-s}(\Omega)$.
Together with the approximation properties of $\Pi^1$, this shows
\begin{align*}
	\norm{J_{\TT,\TT_c}^1 v}{\widetilde H^{-s}(\Omega)}\lesssim \norm{\Pi^1 v}{\widetilde H^{-s}(\Omega)}+\norm{(\Pi^1-J_{\TT,\TT_c}^1) v}{\widetilde H^{-s}(\Omega)}\lesssim
	\norm{v}{\widetilde H^{-s}(\Omega)}
\end{align*}
as well as
\begin{align*}
	\norm{(1-J_{\TT,\TT_c}^1) v}{\widetilde H^{-s}(\Omega)}\lesssim \norm{(1-\Pi^1) v}{\widetilde H^{-s}(\Omega)}+\norm{(\Pi^1-J_{\TT,\TT_c}^1) v}{\widetilde H^{-s}(\Omega)}\lesssim
	\norm{h_{\TT_c}^s v}{L^2(\Omega)}
\end{align*}
and concludes~\eqref{eq:szOmhs}--\eqref{eq:szmaO}.
\end{proof}

For the next theorem, note that for all $\ell\geq 1$, there exists a coarsening $\overline\TT_\ell$ of $\widehat\TT_\ell$ such that 
$\widehat\TT_\ell={\rm bisec5}(\overline\TT_\ell,\overline\TT_\ell)$.

\begin{theorem}\label{thm:szb1}
	Recall $\widehat\TT_\ell$ and $C_{\rm mesh}$ from Definition~\ref{def:widehat}.
	With $J_\ell^1:= J_{\widehat\TT_\ell,\overline\TT_{\ell}}^1$, define
	\begin{align*}
		S_\ell^1 v := \lim_{N\to\infty} (J_\ell^1 J_{\ell+1}^1\ldots J_{\ell+N}^1)v\in S^{1}_\star(\widehat \TT_\ell)
	\end{align*}
	for all $v\in L^2(\Omega)$ and all $\ell\geq 1$.
	Then, the operator $S^1_\ell\colon L^2(\Omega)\to \SS^{1}(\widehat \TT_\ell)$ is well-defined and satisfies 
	\begin{align}
		\norm{(1-S_\ell^1) v}{\widetilde H^{-s}(\Omega)}&\leq C_{\rm S} C_{\rm mesh}^{-s\ell }\norm{v}{L^2(\Omega)}\quad\text{for all }v\in L^2(\Omega) \text{ and }0<s<1/2,\label{eq:Sapprox1}\\
		\norm{(1-S_\ell^1) v}{\widetilde H^{-s}(\Omega)}&\leq C_{\rm S} C_{\rm mesh}^{-(s-r)\ell }\norm{v}{\widetilde H^{-r}(\Omega)}\quad\text{for all }v\in \widetilde H^{-r}(\Omega),
		0<r\leq s-1/4,\label{eq:Sapprox15}\\
		\norm{(1-S_\ell^1) v}{L^2(\Omega)}&\leq C_{\rm S} C_{\rm mesh}^{-\ell/4}\norm{v}{H^{1/4}(\Omega)}\quad\text{for all }v\in H^{1/4}(\Omega),\label{eq:Sapprox2}\\
		\norm{S_\ell^1 v}{\widetilde H^{-1/4}(\Omega)}&\leq C_{\rm S} \norm{ v}{\widetilde H^{-1/4}(\Omega)}\quad\text{for all }v\in \widetilde H^{-1/4}(\Omega), \label{eq:Scont1}\\
		\norm{S_\ell^1 v}{H^{s}(\Omega)}&\leq C_{\rm S} \norm{ v}{ H^{s}(\Omega)}\quad\text{for all }v\in H^{s}(\Omega)\text{ and }s\in\{0,1/4\}. \label{eq:Scont2}
	\end{align}
	 Moreover, there holds $S_\ell^1 S_k^1=S_{\min\{\ell,k\}}^1$ for all $\ell,k\in\N_0$ as well as
	 \begin{align}\label{eq:Sid}
	  S_\ell^1 v = J_\ell^1 J_{\ell+1}^1\cdots J_{\ell+k}^1 v\quad\text{for all }v\in \SS^{1}(\widehat\TT_{\ell+k})
	 \end{align}
	for all $\ell,k\in\N$.
	The constant $C_{\rm S}>0$ depends only on $C_{\rm sz}$ and $C_{\rm mesh}$, $\TT_0$, and $s$.
\end{theorem}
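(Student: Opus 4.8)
The plan is to reduce every assertion to the telescoping identity
\[
J_\ell^1 J_{\ell+1}^1 \cdots J_{\ell+N}^1 = J_\ell^1 + \sum_{j=1}^{N} \big(J_\ell^1 \cdots J_{\ell+j-1}^1\big)\big(J_{\ell+j}^1 - I\big),
\]
combined with three ingredients from Lemma~\ref{lem:sz1}: (i) uniform $L^2(\Omega)$-, $\widetilde H^{-s}(\Omega)$- and $H^s(\Omega)$-stability of each $J_k^1$ with the single constant $C_{\rm sz}$; (ii) the level-wise ``detail'' estimates $\|(I-J_k^1)v\|_{\widetilde H^{-s}(\Omega)}\lesssim C_{\rm mesh}^{-(s-r)k}\|v\|_{\widetilde H^{-r}(\Omega)}$ (for $r=0$ this is \eqref{eq:szmaO}; the fractional negative orders follow by interpolating \eqref{eq:szmaO} against the stability \eqref{eq:szOmhs}), and the analogous positive-order versions from \eqref{eq:sza}; and (iii) the inverse estimates on the quasi-uniform spaces $\SS^1(\widehat\TT_\ell)$ of mesh size $\simeq C_{\rm mesh}^{-\ell}$. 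The essential point is that in each summand the factor $C_{\rm sz}^{\,j}$ produced by the $j$ leading projections $J_\ell^1\cdots J_{\ell+j-1}^1$ is defeated by a gain $C_{\rm mesh}^{-\alpha j}$ coming from (ii) and (iii), so the series is controlled by $\sum_j (C_{\rm sz}C_{\rm mesh}^{-\alpha})^j$, which converges precisely because Definition~\ref{def:widehat} imposes $C_{\rm mesh}\ge(C_{\rm sz}+1)^4$ (the worst admissible exponent being $\alpha=1/4$).

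First I would prove well-definedness and $L^2$-stability: applying the identity in the $L^2(\Omega)$-norm and estimating a summand by an inverse estimate as $C_{\rm mesh}^{\ell/4}\|\cdot\|_{\widetilde H^{-1/4}(\Omega)}\lesssim C_{\rm mesh}^{\ell/4}C_{\rm sz}^{\,j}\,C_{\rm mesh}^{-(\ell+j)/4}\|v\|_{L^2(\Omega)}=(C_{\rm sz}C_{\rm mesh}^{-1/4})^{j}\|v\|_{L^2(\Omega)}$, using \eqref{eq:szmaO} and the $\widetilde H^{-1/4}$-stability of the $J_k^1$. Summing the geometric series shows $J_\ell^1\cdots J_{\ell+N}^1 v$ is Cauchy in $L^2(\Omega)$ uniformly in $v$, so $S_\ell^1$ is well defined, $L^2$-bounded, with range in $\SS^1(\widehat\TT_\ell)$; mean-preservation of the $J_k^1$ (Lemma~\ref{lem:sz1}) puts the limit in $\SS^1_\star(\widehat\TT_\ell)$ for zero-mean inputs. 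The algebraic identities are then soft: for $v\in\SS^1(\widehat\TT_{\ell+k})$ every $J_{\ell+m}^1$ with $m>k$ fixes $v$ (being the projection onto $\SS^1(\widehat\TT_{\ell+m})\supseteq\SS^1(\widehat\TT_{\ell+k})$), so the sequence is constant for $N\ge k$, which is \eqref{eq:Sid}; taking $k=0$ shows $S_\ell^1$ is a projection onto $\SS^1(\widehat\TT_\ell)$. For the nesting $S_\ell^1 S_k^1=S^1_{\min\{\ell,k\}}$ I would split into $k\le\ell$ (then $S_k^1 v\in\SS^1(\widehat\TT_k)\subseteq\SS^1(\widehat\TT_\ell)$ is fixed by $S_\ell^1$) and $\ell\le k$ (then by \eqref{eq:Sid} and $J_k^1 J_k^1=J_k^1$ one gets $S_\ell^1 S_k^1 v=J_\ell^1\cdots J_k^1 S_k^1 v=\lim_N J_\ell^1\cdots J_{k-1}^1 J_k^1 J_{k+1}^1\cdots J_{k+N}^1 v=S_\ell^1 v$).

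For the stability estimates \eqref{eq:Scont1}--\eqref{eq:Scont2} and the approximation estimates \eqref{eq:Sapprox1}--\eqref{eq:Sapprox2} I would run the same telescoping in the respective target norm: the leading term $J_\ell^1 v$ is covered by Lemma~\ref{lem:sz1}, and each remaining summand is bounded by pairing a detail estimate from (ii) with an inverse estimate on $\SS^1(\widehat\TT_\ell)$ from (iii), yielding a summable geometric series. For the approximation bounds I would additionally exploit that $S_\ell^1$ is a projection, so $(1-S_\ell^1)v=(1-S_\ell^1)(v-w)$ for any $w\in\SS^1(\widehat\TT_\ell)$, reducing to best approximation (controlled via $\|(1-J_\ell^1)v\|$ and its interpolated versions) times the stability of $S_\ell^1$; the borderline case $s\to 1/2$ in \eqref{eq:Sapprox1} is handled directly instead, bounding $\|(S_\ell^1-J_\ell^1)v\|_{\widetilde H^{-s}}$ termwise without inverse estimates since $\|(J_{\ell+j}^1-I)v\|_{\widetilde H^{-s}}\lesssim C_{\rm mesh}^{-s(\ell+j)}\|v\|_{L^2}$ and the ratio $C_{\rm sz}C_{\rm mesh}^{-s}$ is already below $1$ for $s\ge 1/4$.

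I expect the main obstacle to be exactly this quantitative balancing: a naive bound for the product of $N+1$ Scott-Zhang projections grows like $C_{\rm sz}^{N+1}$, so everything hinges on extracting, in each of the many norms occurring in the statement, a per-level gain $C_{\rm mesh}^{-\alpha}$ with $\alpha$ large enough that $C_{\rm sz}C_{\rm mesh}^{-\alpha}<1$ under $C_{\rm mesh}\ge(C_{\rm sz}+1)^4$. Closely tied to this is the bookkeeping of interpolation indices for the dual spaces $\widetilde H^{-s}(\Omega)$ — this is where the gap condition $0<r\le s-1/4$ in \eqref{eq:Sapprox15} originates, since one must leave room simultaneously for the gain in the detail estimate and for the loss in the inverse estimate while staying inside the admissible window of Sobolev orders on which the Scott-Zhang estimates of Lemma~\ref{lem:sz1} hold.
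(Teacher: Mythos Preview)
Your proposal is correct and is essentially the paper's own argument: the paper likewise expands $I-(J_\ell^1\cdots J_{\ell+N}^1)$ telescopically, bounds each summand by $C_{\rm sz}^{k}\,C_{\rm mesh}^{-(\mu-\nu)(\ell+k)}\|u\|_{H^\mu}$ using the stability and approximation properties of Lemma~\ref{lem:sz1}, sums the resulting geometric series via $C_{\rm mesh}^{1/4}>C_{\rm sz}$, and then derives the stability bounds from the approximation bounds plus an inverse estimate on $\SS^1(\widehat\TT_\ell)$. The only cosmetic difference is that the paper obtains the Cauchy property by combining the two index pairs $(\nu,\mu)=(0,1/4)$ and $(1/4,1/2)$, whereas you read it off directly from the tail of your telescoping series; both are equivalent.
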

\begin{proof}
For brevity of presentation, we also write $H^{-s}(\Omega)$ for $\widetilde H^{-s}(\Omega)$ in this proof.
	From Lemma~\ref{lem:sz1}, we obtain for $-1/2< r\leq 1$ and $s=1$
	\begin{align}\label{eq:szaOO}
		\norm{(1-J^1_\ell)v}{H^r(\Omega)}\lesssim C_{\rm mesh}^{1-r}\norm{v}{H^{s}(\Omega)}\quad\text{for all }v\in H^s(\Omega).
	\end{align}
	Moreover, from~\eqref{eq:szOhs}, we even get $\norm{(1-J^1_\ell)v}{H^r(\Omega)}\lesssim \norm{v}{H^{r}(\Omega)}$ for all $0\leq r\leq 1$.
	Interpolation arguments prove that~\eqref{eq:szaOO} holds for all $s$ with $r\leq s \leq 1$.
	Let $u\in H^{\mu}(\Omega)$. Since $J^1_\ell\colon H^\nu(\Omega)\to H^\nu(\Omega)$ is continuous for $-1/2< \nu\leq 1$ (see Lemma~\ref{lem:sz1}), 
	there holds for $-1/2< \nu\leq 1$ and $\nu+1/4\leq \mu \leq 1$ with $\mu\geq 0$ and by use of~\eqref{eq:szaOO} that
	\begin{align}\label{eq:sz1}
		\begin{split}
			\norm{(1-&(J^1_\ell J^1_{\ell+1}\ldots J^1_{\ell+N}))u}{H^{\nu}(\Omega)}\\
			&\leq \norm{(1-J^1_{\ell})u}{H^{\nu}(\Omega)}+\sum_{k=0}^{N-1}\norm{( (J^1_{\ell}\cdots J^1_{\ell+k})-(J^1_{\ell}\cdots J^1_{\ell+k+1}))u}{H^{\nu}(\Omega)}\\
			&\leq \norm{(1-J^1_{\ell})u}{H^{\nu}(\Omega)}+\sum_{k=0}^{N-1}\norm{(J^1_{\ell}\cdots J^1_{\ell+k})}{H^{\nu}(\Omega)\to H^{\nu}(\Omega)}\norm{(1-J^1_{\ell+k+1})u}{H^{\nu}(\Omega)}\\
			&\lesssim \sum_{k=0}^{N}C_{\rm sz}^{k}C_{\rm mesh}^{-(\mu-\nu)(\ell+k)}\norm{u}{H^\mu(\Omega)}\lesssim C_{\rm mesh}^{-(\mu-\nu)\ell}\norm{u}{H^\mu(\Omega)},
		\end{split}
	\end{align}
	where we used $C_{\rm mesh}^{(\mu-\nu)}\geq C_{\rm mesh}^{1/4}>C_{\rm sz}$ from Definition~\ref{def:widehat}.
	Hence,~\eqref{eq:sz1} for $ (\nu_1,\mu_1)= (0,1/4)$ and $(\nu_2,\mu_2)=(1/4,1/2)$ implies for $M\leq N$
	\begin{align*}
		\norm{((J^1_\ell J^1_{\ell+1}\ldots J^1_{\ell+M})&-(J^1_\ell J^1_{\ell+1}\ldots J^1_{\ell+N}))u}{L^2(\Omega)}\\
		&=\norm{(J^1_\ell\ldots J^1_{\ell+M})((1-(J^1_{\ell+M+1} J^1_{\ell+1}\ldots J^1_{\ell+N}))u}{L^2(\Omega)}\\
		&\leq \norm{((1-(J^1_{\ell+M+1} J^1_{\ell+1}\ldots J^1_{\ell+N}))u}{L^2(\Omega)}\\
		&\qquad+\norm{(1-(J^1_\ell\ldots J^1_{\ell+M}))((1-(J^1_{\ell+M+1} J^1_{\ell+1}\ldots J^1_{\ell+N}))u}{L^2(\Omega)}\\
		&\lesssim \norm{((1-(J^1_{\ell+M+1} J^1_{\ell+1}\ldots J^1_{\ell+N}))u}{H^{1/4}(\Omega)}\\
		&\lesssim C_{\rm mesh}^{-(\ell+M)/4}\norm{u}{H^{1/2}(\Omega)}.
	\end{align*}
	This shows, that $(J^1_\ell J^1_{\ell+1}\ldots J^1_{\ell+N})u$ is a Cauchy-sequence in $\SS^{1}(\widehat\TT_\ell)$ with respect to the $L^2(\Omega)$-norm as $N\to\infty$.
	Thus, for $u\in H^{1/2}(\Omega)$, the limit $S_\ell^1 u\in \SS^{1}(\widehat\TT_\ell)$ exists.  
	The estimates~\eqref{eq:Sapprox1}--\eqref{eq:Sapprox2} follow from~\eqref{eq:sz1}, the convergence $\lim_{N\to\infty}(J^1_\ell\ldots J^1_{\ell+N})v= S_\ell v$ in $L^2(\Omega)$,
	and density arguments. 
	
	To see~\eqref{eq:Scont1}--\eqref{eq:Scont2}, we apply inverse estimates for $s\in\{-1/4,0,1/4\}$ as well as Lemma~\ref{lem:sz1} and~\eqref{eq:Sapprox1}--\eqref{eq:Sapprox15} to see
	\begin{align*}
		\norm{S_\ell^1 u}{H^s(\Omega)}&\lesssim \norm{(S_\ell^1 -J^1_\ell) u}{H^s(\Omega)}+\norm{ u}{H^s(\Omega)}
		\\
		&\lesssim C_{\rm mesh}^{\ell(2/5+s)}\norm{(S_\ell^1 -J^1_\ell) u}{\widetilde H^{-2/5}(\Omega)}+\norm{ u}{H^s(\Omega)}\\
		&\lesssim C_{\rm mesh}^{\ell(2/5+s)}\big(\norm{(1-S_\ell^1) u}{\widetilde H^{-2/5}(\Omega)}+\norm{(1-J^1_\ell) u}{\widetilde H^{-2/5}(\Omega)}\big)+\norm{ u}{H^s(\Omega)}\\
		&\lesssim \norm{ u}{H^s(\Omega)},
	\end{align*}	
	The proof of $S_k S_\ell^1 u = S_k u$,~\eqref{eq:Sid}, and the projection property follows analogously to~\cite[Theorem~5.6]{fembemopt}.
\end{proof}

\begin{lemma}\label{lem:sz2}
	Define the Scott-Zhang operator $J_\TT^2\colon H^1(\Omega)\to \SS^2(\TT)$ from~\cite{scottzhang}. $J_\TT^2$ is a projection which satisfies for all $1/2< s<3/2$
	and all $v\in H^s(\Omega)$
	\begin{align}
		\norm{J_\TT^2 v}{H^s(T)}&\leq C_{\rm sz} \norm{v}{H^s(\cup\omega(T,\TT))},\label{eq:2szhs}\\
		\norm{J_\TT^2 v}{H^s(\Omega)}&\leq C_{\rm sz} \norm{v}{H^s(\Omega)},\label{eq:2szOhs}
	\end{align}
	as well as  for all $0\leq r\leq s$
	\begin{align}
		\norm{(1-J_\TT^2) v}{H^r(T)}&\leq C_{\rm sz} {\rm diam}(T)^{s-r} |v|_{H^s(\cup\omega(T,\TT))},\label{eq:2sza}\\
		\norm{(1-J_\TT^2) v}{H^r(\Omega)}&\leq C_{\rm sz} \norm{h_\TT^{1-r}\nabla v}{L^2(\Omega)}.\label{eq:2szaO}
	\end{align}
	The constant $C_{\rm sz}>0$ depends only on the shape regularity of $\TT$, the fact that $\TT$ is generated from $\TT_0$ by newest vertex bisection and on a lower bound on $s>1/2$. 
	The function $(J_\TT^2 v)|_T$ depends only on $v|_{\cup\omega(T,\TT)}$ and $v|_{\partial\Omega}=0$ implies $(J_\TT^2 v)|_{\partial\Omega}=0$. 
\end{lemma}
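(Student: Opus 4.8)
The plan is to reduce the whole statement to the classical Scott--Zhang estimates of~\cite{scottzhang} combined with an interpolation argument, in complete parallel with the treatment of the modified operator in Lemma~\ref{lem:sz1}. First I would recall the construction of $J_\TT^2$: to each global degree of freedom of $\SS^2(\TT)$ (in two dimensions a vertex or an edge midpoint) one assigns a simplex or an edge $\sigma$ of $\TT$ carrying that degree of freedom, with the convention $\sigma\subseteq\partial\Omega$ whenever the degree of freedom lies on $\partial\Omega$, and chooses the $L^2(\sigma)$-dual $\psi$ of the nodal basis restricted to $\sigma$; then $J_\TT^2 v:=\sum\phi\,\dual{v}{\psi}_\sigma$. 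Two structural facts are then immediate. The functionals entering $(J_\TT^2 v)|_T$ are exactly those attached to basis functions not vanishing on $T$, and the associated $\sigma$'s lie in $\cup\omega(T,\TT)$, so $(J_\TT^2 v)|_T$ depends only on $v|_{\cup\omega(T,\TT)}$; and if $v|_{\partial\Omega}=0$ then every functional attached to a boundary degree of freedom integrates a vanishing trace, so $(J_\TT^2 v)|_{\partial\Omega}=0$. The biorthogonality $\dual{\phi_j}{\psi_i}_{\sigma_i}=\delta_{ij}$ of the dual functionals gives the projection property, and in particular $J_\TT^2 p=p$ for all $p\in\PP^1(\Omega)$ (indeed $p\in\PP^2$ on each patch locally).

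Next I would prove the local stability bounds. Fix $T\in\TT$ and an affine map from a reference patch of unit diameter onto $\cup\omega(T,\TT)$. Since $\TT$ is generated from $\TT_0$ by newest vertex bisection, these reference configurations range over a finite family determined by $\TT_0$, so every constant below is uniform (and the dependence on a lower bound for $s>1/2$ enters only through the trace constant on the reference edges). On the reference patch the $\psi$'s are fixed $L^2$-functions on fixed edges and simplices, hence the trace theorem yields $|\dual{v}{\psi}_\sigma|\lesssim\|v\|_{H^s(\hat\sigma)}$ for every $s>1/2$; summing over the finitely many basis functions touching the reference $\hat T$ gives $\|J_\TT^2 v\|_{H^1(\hat T)}\lesssim\|v\|_{H^1}$ on the reference patch, whence~\eqref{eq:2szhs} and~\eqref{eq:2szOhs} for $s=1$ after scaling back and summing over $T\in\TT$ using finite overlap of the patches. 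For general $1/2<s<3/2$ I would invoke the interpolation lemma for the bounded operators $J_\TT^2\colon H^{1/2+\delta}(\Omega)\to H^{1/2+\delta}(\Omega)$ and $J_\TT^2\colon H^{s_0}(\Omega)\to H^{s_0}(\Omega)$ with a fixed $s_0\in(s,3/2)$, the latter from local inverse estimates together with the $H^1$-bound; locality again allows the interpolation to be carried out patchwise so that the scaling factors are exactly powers of $\diam(T)$.

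For the approximation estimates~\eqref{eq:2sza}--\eqref{eq:2szaO} I would combine this stability with polynomial reproduction: since $1/2<s<3/2$ we have $\lceil s\rceil-1\le1$, so the Deny--Lions/Bramble--Hilbert lemma on the reference configuration (and scaling) provides, for $0\le r\le s$, some $p\in\PP^1$ with $\|v-p\|_{H^r(\cup\omega(T,\TT))}\lesssim\diam(T)^{s-r}|v|_{H^s(\cup\omega(T,\TT))}$. Because $J_\TT^2 p=p$, we get $(1-J_\TT^2)v=(1-J_\TT^2)(v-p)$ on $T$, and the local stability just proved bounds $\|(1-J_\TT^2)(v-p)\|_{H^r(T)}\lesssim\|v-p\|_{H^r(\cup\omega(T,\TT))}$, which is~\eqref{eq:2sza}; squaring, summing, and using bounded overlap together with $\diam(T)^{1-r}|v|_{H^1(\cup\omega(T,\TT))}\le\|h_\TT^{1-r}\nabla v\|_{L^2(\cup\omega(T,\TT))}$ yields~\eqref{eq:2szaO}. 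The one point requiring care — and the main obstacle — is keeping all constants uniform over the whole refinement family $\T$: one must use that newest vertex bisection started from $\TT_0$ produces only finitely many patch shapes up to affine equivalence, so that the finitely many reference trace constants, inverse constants, and Bramble--Hilbert constants transfer with scaling factors that are precisely powers of $\diam(T)$. Apart from this bookkeeping the argument is the classical Scott--Zhang construction of~\cite{scottzhang} merged with the interpolation lemma already used for Lemma~\ref{lem:sz1}.
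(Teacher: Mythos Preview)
The paper does not prove this lemma at all: it simply records the well-known properties of the classical Scott--Zhang projection from~\cite{scottzhang} and moves on (contrast Lemma~\ref{lem:sz1}, where a proof is given because the operator is modified). Your sketch is a correct outline of the standard argument --- construction via dual functionals on edges, locality and boundary preservation from the choice of averaging sets, projection property from biorthogonality, local stability by pulling back to finitely many reference patches and invoking the trace theorem, approximation via Bramble--Hilbert, and interpolation for fractional~$s$ --- and is in fact more detailed than anything the paper provides. The only point worth tightening is the $H^{s_0}$-stability step for $s_0>1$: rather than ``inverse estimates together with the $H^1$-bound'' directly, the cleaner route is $\|J_\TT^2 v\|_{H^{s_0}}\le\|v\|_{H^{s_0}}+\|(1-J_\TT^2)v\|_{H^{s_0}}$ and then bound the second term by an inverse estimate plus the already-established approximation estimate in a lower norm; otherwise the argument is sound.
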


\begin{theorem}\label{thm:szb2}
	With $J_\ell^2:= J_{\widehat\TT_\ell}^2\colon H^1(\Omega)\to \SS^2(\widehat\TT_\ell)$ denoting the standard Scott-Zhang projection from Lemma~\ref{lem:sz2}, define
	\begin{align*}
		S_\ell^2 v := \lim_{N\to\infty} (J_\ell^2 J_{\ell+1}^2\ldots J_{\ell+N}^2)v\in S^{2}(\widehat \TT_\ell)
	\end{align*}
	for all $v\in H^1(\Omega)$.
	Then, the operator $S^2_\ell\colon H^1(\Omega)\to \SS^{1}(\widehat \TT_\ell)$ is well-defined and satisfies 
	\begin{align}
		\norm{(1-S_\ell^2) v}{ H^{1-s}(\Omega)}&\leq C_{\rm S} C_{\rm mesh}^{-s\ell }\norm{v}{H^1(\Omega)}\quad\text{for all }v\in H^1(\Omega) \text{ and }0<s<1/2,\label{eq:S2approx1}\\
		\norm{(1-S_\ell^2) v}{H^{1}(\Omega)}&\leq C_{\rm S} C_{\rm mesh}^{-\ell/4}\norm{v}{H^{5/4}(\Omega)}\quad\text{for all }v\in H^{1}(\Omega),\label{eq:S2approx2}\\
		\norm{S_\ell^2 v}{H^{s}(\Omega)}&\leq C_{\rm S} \norm{ v}{ H^{s}(\Omega)}\quad\text{for all }v\in H^{s}_\star(\Omega)\text{ and }s\in\{3/4,1,5/4\}. \label{eq:S2cont2}
	\end{align}
	 Moreover, there holds $S_\ell^2 S_k^2=S_{\min\{\ell,k\}}^2$ for all $\ell,k\in\N_0$ as well as
	 \begin{align}\label{eq:S2id}
	  S_\ell^2 v = J_\ell^2 J_{\ell+1}^2\cdots J_{\ell+k}^2 v\quad\text{for all }v\in \SS^{1}_\star(\widehat\TT_{\ell+k})
	 \end{align}
	for all $\ell,k\in\N$. 
	The constant $C_{\rm S}>0$ depends only on $C_{\rm sz}$ and $C_{\rm mesh}$, $\TT_0$, and $s$.
\end{theorem}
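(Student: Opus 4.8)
The plan is to run the argument of Theorem~\ref{thm:szb1} essentially verbatim, replacing the Gelfand triple $(\widetilde H^{-1/4}(\Omega),L^2(\Omega),H^{1/4}(\Omega))$ by the triple $(H^{3/4}(\Omega),H^1(\Omega),H^{5/4}(\Omega))$ from Lemma~\ref{lem:gelfand}, i.e. with $H^1(\Omega)$ as pivot space. First I would transport Lemma~\ref{lem:sz2} onto the uniform meshes $\widehat\TT_\ell$: since ${\rm diam}(T)\simeq C_{\rm mesh}^{-\ell}$ on $\widehat\TT_\ell$, the estimates~\eqref{eq:2sza}--\eqref{eq:2szaO} together with interpolation yield
\begin{align*}
 \norm{(1-J_\ell^2)v}{H^r(\Omega)}\lesssim C_{\rm mesh}^{-(s-r)\ell}\norm{v}{H^s(\Omega)}\qquad\text{for } 1/2<r\leq s<3/2,
\end{align*}
while~\eqref{eq:2szOhs} gives the uniform continuity $\norm{J_\ell^2 v}{H^s(\Omega)}\lesssim\norm{v}{H^s(\Omega)}$ for $1/2<s<3/2$. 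Telescoping the composition $J_\ell^2 J_{\ell+1}^2\cdots J_{\ell+N}^2$ exactly as in~\eqref{eq:sz1} then gives, for $1/2<\nu\leq\mu-1/4$ with $\mu<3/2$,
\begin{align*}
 \norm{(1-J_\ell^2\cdots J_{\ell+N}^2)u}{H^\nu(\Omega)}\lesssim\sum_{k=0}^N C_{\rm sz}^k C_{\rm mesh}^{-(\mu-\nu)(\ell+k)}\norm{u}{H^\mu(\Omega)}\lesssim C_{\rm mesh}^{-(\mu-\nu)\ell}\norm{u}{H^\mu(\Omega)},
\end{align*}
the geometric series converging because $C_{\rm mesh}^{\mu-\nu}\geq C_{\rm mesh}^{1/4}>C_{\rm sz}$ by Definition~\ref{def:widehat}.

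The key structural observation, as in~\cite[Theorem~5.6]{fembemopt}, is that for a fixed $v\in\SS^2(\widehat\TT_{\ell+k})$ the projectors $J_m^2$ fix $v$ for all $m\geq\ell+k$, so the defining sequence $J_\ell^2 J_{\ell+1}^2\cdots J_{\ell+N}^2 v$ is eventually constant, equal to $J_\ell^2\cdots J_{\ell+k}^2 v$. Hence $S_\ell^2$ is well defined on the dense subspace $\bigcup_m\SS^2(\widehat\TT_m)$ of $H^1(\Omega)$, is a projection onto $\SS^2(\widehat\TT_\ell)$, and satisfies~\eqref{eq:S2id}; on this subspace the telescoping sum above is finite, so it already delivers~\eqref{eq:S2approx1} for $1/4\leq s<1/2$ (take $(\nu,\mu)=(1-s,1)$) and~\eqref{eq:S2approx2} (take $(\nu,\mu)=(1,5/4)$). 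I would then recover the $H^1$-continuity in~\eqref{eq:S2cont2} by the inverse-estimate device from the proof of Theorem~\ref{thm:szb1}: writing $S_\ell^2 v = J_\ell^2 v + (S_\ell^2-J_\ell^2)v$ with $(S_\ell^2-J_\ell^2)v\in\SS^2(\widehat\TT_\ell)$,
\begin{align*}
 \norm{S_\ell^2 v}{H^1(\Omega)}\lesssim\norm{v}{H^1(\Omega)}+C_{\rm mesh}^{\ell/4}\big(\norm{(1-S_\ell^2)v}{H^{3/4}(\Omega)}+\norm{(1-J_\ell^2)v}{H^{3/4}(\Omega)}\big)\lesssim\norm{v}{H^1(\Omega)},
\end{align*}
using~\eqref{eq:S2approx1} at $s=1/4$ and Lemma~\ref{lem:sz2}; interpolating this against~\eqref{eq:S2approx1} at $s=1/4$ fills in~\eqref{eq:S2approx1} for all $0<s<1/2$, and the same inverse-estimate argument gives~\eqref{eq:S2cont2} for $s\in\{3/4,5/4\}$. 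Since $S_\ell^2$ is then bounded on $H^1(\Omega)$ on a dense subspace, it extends uniquely to $S_\ell^2\colon H^1(\Omega)\to\SS^2(\widehat\TT_\ell)$, all the stated estimates extend by density, and the semigroup identity $S_\ell^2 S_k^2=S_{\min\{\ell,k\}}^2$ follows from the eventually-constant structure precisely as in~\cite[Theorem~5.6]{fembemopt}.

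The one genuinely delicate point is the Sobolev-index bookkeeping. In contrast to the $S^1$-construction, which had the whole negative scale at its disposal, $J_\ell^2$ is bounded and approximation-stable only on the open window $H^s(\Omega)$, $1/2<s<3/2$. Thus every intermediate exponent occurring in the telescoping and inverse estimates must be kept strictly inside this window — in particular the source regularity cannot be pushed beyond $H^{5/4}$ toward $H^{3/2}$ — while the spacing between consecutive exponents must stay at least $1/4$ so that the decisive contraction $C_{\rm sz}C_{\rm mesh}^{-1/4}<1$ is available, and the endpoint ranges $0<s<1/4$ and $s\in\{3/4,5/4\}$ must be reached by interpolation rather than directly. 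Once the exponents are fixed accordingly, everything else is routine, and the purely algebraic identities are those of~\cite[Theorem~5.6]{fembemopt} and of the proof of Theorem~\ref{thm:szb1}.
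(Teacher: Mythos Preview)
Your proposal is correct and follows exactly the route the paper indicates: the proof in the paper is simply ``follows analogously to~\cite[Theorem~5.6]{fembemopt} and is therefore omitted'', and you have spelled out precisely that analogy, running the telescoping and inverse-estimate argument of Theorem~\ref{thm:szb1} with the scale shifted from $L^2$ to $H^1$ and using Lemma~\ref{lem:sz2} in place of Lemma~\ref{lem:sz1}. Your closing remark about keeping all intermediate exponents strictly inside the window $(1/2,3/2)$ while maintaining the $1/4$-gap, and reaching the endpoints by interpolation, is exactly the bookkeeping the analogy requires.
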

 \begin{proof}
  The proof follows analogously to~\cite[Theorem~5.6]{fembemopt} and is therefore omitted.
 \end{proof}

\bibliographystyle{siamplain}
\bibliography{literature}
\end{document}